\newtheorem{theorem}{Theorem}[section]
\newtheorem{lemma}[theorem]{Lemma}
\newtheorem{proposition}[theorem]{Proposition}
\newtheorem{corollary}[theorem]{Corollary}
\theoremstyle{definition}
\newtheorem{definition}[theorem]{Definition}
\newtheorem{example}[theorem]{Example}
\theoremstyle{remark}
\newtheorem{remark}[theorem]{Remark}
\numberwithin{equation}{section}
\def\a{\alpha}
\def\b{\beta}
\def\l{\lambda}
\def\|{\parallel}
\def\<{\left \langle}
\def\>{\right \rangle}
\def\s{\sharp}
\begin{document}
\setcounter{page}{1}

\title[Positive Multilinear Mappings]{ Developed Matrix inequalities via Positive Multilinear Mappings}
\author[M. Dehghani, M. Kian and Y. Seo]{ Mahdi Dehghani, Mohsen  Kian and Yuki Seo }

\address{Mehdi Dehghani: Department of Pure Mathematics, Faculty of Mathematical Sciences, University of Kashan, Kashan, Iran}
\email{\textcolor[rgb]{0.00,0.00,0.84}{e.g.mahdi@gmail.com }}

\address{Mohsen Kian:\ \
 Department of Mathematics, Faculty of Basic Sciences, University of Bojnord, P. O. Box
1339, Bojnord 94531, Iran}
\email{\textcolor[rgb]{0.00,0.00,0.84}{kian@ub.ac.ir \ and \ kian@member.ams.org}}

\address{Yuki Seo: Department of Mathematics Education, Osaka Kyoiku University, Asahigaoka Kashiwara Osaka
582-8582, Japan.}
\email{\textcolor[rgb]{0.00,0.00,0.84}{ yukis@cc.osaka-kyoiku.ac.jp }}

\subjclass[2010]{Primary 15A69 ; Secondary 47A63,47A64, 47A56}

\keywords{ Positive multilinear mapping, positive matrix, spectral decomposition, matrix convex, matrix mean, Karcher mean}
%%%%%%%%%%%%%%%%%%%%%%%%%%%%%%%%%%%%%%%%%%%%%%%%%%%%%%%%%%%%%%%%%%%%%%%%%%%%%%%%%%%%%%%%%%%%%%%%%%%%%
\begin{abstract}
Utilizing the notion of  positive multilinear mappings, we give some matrix inequalities. In particular,   Choi--Davis--Jensen and Kantorovich  type  inequalities including positive multilinear mappings are presented.
\end{abstract}
\maketitle
%%%%%%%%%%%%%%%%%%%%%%%%%%%%%%%%%%%%%%
\section{Introduction}
Throughout the paper assume that  $\mathcal{M}_p(\mathbb{C}):=\mathcal{M}_p$ is the $C^*$-algebra  of all $p\times p$ complex matrices with the identity matrix $I$. A matrix $A\in \mathcal{M}_p$ is   positive (denoted by $A\geq0$) if it is Hermitian and all its eigenvalues are nonnegative. If in addition $A$ is invertible, then it is called strictly positive (denoted by $A>0$).  A linear mapping  $\Phi:\mathcal{M}_n \rightarrow\mathcal{M}_p$ is called positive if $\Phi$ preserves the positivity and $\Phi$ is called unital if $\Phi(I)=I$. The notion of positive linear mappings on $C^*$-algebras  is a well-known tool to study matrix inequalities which  have been studied by many mathematicians. If $\Phi:\mathcal{M}_n \rightarrow\mathcal{M}_p$ is a unital positive linear mapping, then the famous Kadison's  inequality  states that  $\Phi(A)^2 \leq \Phi(A^2)$ for every Hermitian matrix $A$. Moreover, Choi's inequality   asserts  that $\Phi(A)^{-1}\leq \Phi(A^{-1})$ for every strictly positive matrix $A$, see e.g. \cite{Bh, Choi, Davis}. A continuous real function $f:J\to\mathbb{R}$ is said to be matrix convex if  $f\left(\frac{A+ B}{2}\right)\leq \frac{f(A)+ f(B)}{2}$ for all Hermitian matrices $A,B$ with eigenvalues in $J$.  Positive linear mappings have been used to characterize matrix convex and matrix monotone functions. It is well-known that a continuous real function $f:J\to\mathbb{R}$ is matrix convex if and only if $f(\Phi(A))\leq \Phi(f(A))$ for every unital positive linear mapping $\Phi$ and every Hermitian matrix $A$ with spectrum in $J$. The later inequality is known as the Choi-Davis-Jensen inequality, see \cite{Choi,Davis, FMPS}. For more information about matrix  inequalities concerning positive linear mappings the reader is referred to \cite{AuH,BhSh,BR,Furuta,KSA, KM,Lin,MPP,N} and references therein.

It is known that if $A=(a_{ij})$ and $B=(b_{ij})$ are positive matrices, then so is their Hadamard product $A\circ B$, which is defined by the entrywise product as $A\circ B=(a_{ij}b_{ij})$. The same is true for their tensor product $A\otimes B$, see e.g. \cite{HJ}.  Moreover, the mapping $(A,B)\to A\otimes B$ is also linear in each of its variables. So if we define  $\Phi:\mathcal{M}_q^2\to \mathcal{M}_p$ by $\Phi(A,B)=A\otimes B$, then $\Phi$ is multilinear  and positive in the sense that $\Phi(A,B)$ is positive whenever $A,B$ are positive. However, the Choi--Davis--Jensen type  inequality $f(\Phi(A,B))\leq\Phi(f(A),f(B))$ does not  hold in general for a unital positive multilinear mapping $\Phi$ and   matrix convex functions $f$. This can be a motivation to study matrix inequalities via positive multilinear mappings.

 The main aim of the present work is to inquire  matrix  inequalities using  positive multilinear mappings. Some of these inequalities would be generalization of inequalities for the Hadamard product and the tensor product of matrices. In Section 2, we give basic facts and examples for positive multilinear mappings. In Section 3, Choi--Davis--Jensen type  inequalities are presented for positive multilinear mappings and some of its applications is given. In Section 4, we obtain some convex theorem for functions  concerning positive multilinear mappings and then we give some Kantorovich type inequalities. Section 5 is devoted to some reverses of the Choi--Davis--Jensen inequality. In Section 6, we consider the Karcher mean and present multilinear versions of some known results.

%%%%%%%%%%%%%%%%%%%%%%%%%%%%%%%%%%%%%%%%%%%% section 1 %%%%%%%%%%%%%%%%%%%%%%%%%%%%%%%%%%%%%%%%%%%%%%%%%%%%%
\section{Preliminaries}
We start by definition of a positive multilinear mapping.
%%%%%%%%%%%%%%%%%%%%%%%%%%%%%%%%%%%%%%%%%%%%%%%%%%%%%%%%%%%%%%%%%

\begin{definition}
  A mapping $\Phi:\mathcal{M}_q^k\to \mathcal{M}_p$ is said to be multilinear if it is linear in each of its variable. It is called positive if $\Phi(A_1,\cdots, A_k)\geq0$, whenever $A_i\geq0$ \ $(i=1,\cdots, k)$. It is called strictly positive if $\Phi(A_1,\cdots, A_k)>0$ whenever $A_i>0$ \ $(i=1,\cdots, k)$. If $\Phi(I,\cdots, I)=I$, then $\Phi$ is called unital.
\end{definition}
%%%%%%%%%%%%%%%%%%%%%%%%%%%%%%%%%%%%%%%%%%%%%%%%%%%%%%%%%%%%%%%%%%%%%%%%%
\begin{example}
The tensor product of every two positive matrices is positive again. This ensures that the mapping $\Phi:\mathcal{M}_q^k\to \mathcal{M}_{q^k}$ defined by
\[
\Phi(A_1,\cdots,A_k)=A_1\otimes\cdots \otimes A_k
\]
is positive. Moreover, it is multilinear and unital. If $\Psi:\mathcal{M}_{q^k}\to \mathcal{M}_{p}$ is a unital positive linear mapping, then the map defined by
\[
\Phi(A_1,\ldots,A_k) = \Psi(A_1\otimes\cdots \otimes A_k)
\]
is a unital positive multilinear mapping. In particular,  the Hadamard product
 $$\Phi(A_1,\cdots,A_k)=A_1\circ\cdots\circ A_k.$$
 is positive and multilinear.
\end{example}
%%%%%%%%%%%%%%%%%%%%%%%%%%%%%%%%%%%%%%%%%%%
\begin{example}
 Assume that $X_i\in\mathcal{M}_q$ \ $(i=1,\cdots, k)$ and $\sum_{i=1}^{k}X_i^*X_i=I$.  The mapping $ \Phi:\mathcal{M}_q^k\to \mathcal{M}_p$ defined by $\Phi(A_1,\cdots,A_k)=\sum_{i=1}^{k}X_i^*A_iX_i$ is positive and unital. However, it is not multilinear.
\end{example}
%%%%%%%%%%%%%%%%%%%%%%%%%%%%%%%%%%%%%%%%%%%%%%%%%%%%%%%%%%%
\begin{example}
The mappings $\Phi:\mathcal{M}_q^k\to \mathcal{M}_p$ defined by
  \begin{align}
    \Phi(A_1,\cdots, A_k):=\mathrm{Tr}(A_1)\cdots \mathrm{Tr}(A_k)I
  \end{align}
  is positive and multilinear, where $\mathrm{Tr}$ is the canonical trace.
\end{example}
%%%%%%%%%%%%%%%%%%%%%%%%%%%%%%%%%%%%%%%%%%%%%%%%%%%%%%%%%%%%%%%%%%%%%%%%%%%%%%
\begin{example}\label{ex2}
Define the mapping $\Phi:\mathcal{M}_p^2\to\mathcal{M}_p$ by $\Phi(T,S)x=(Tx\otimes Sx)x$, where $(u\otimes v)(w)=\langle w,v\rangle u$ for $u,v,w\in {\Bbb C}^p$.  Then $\Phi$ is multilinear. Moreover, if $T$ and $S$ are positive matrices, then
$$\langle \Phi(T,S) x,x\rangle=\langle (Tx\otimes Sx)x,x\rangle=\langle \langle Sx,x\rangle Tx,x\rangle=\langle Sx,x\rangle\langle Tx,x\rangle\geq 0$$
for every $x\in {\Bbb C}^p$. Therefore, $\Phi$ is a positive multilinear mapping.
\end{example}
%%%%%%%%%%%%%%%%%%%%%%%%%%%%%%%%%%%%%%%%%%%%%%%%%%%%%%%%%%%%%%%%%%%%%%%%%%%%%%%%%

It would be useful to see that there is a relevant connection between positive linear and multilinear mappings. Corresponding to  every positive multilinear mapping  $\Phi:\mathcal{M}_q^k \to \mathcal{M}_p$, the map  $\Psi:\mathcal{M}_q\to\mathcal{M}_p$ defined by $\Psi(X)=\Phi(X,I,\cdots,I)$ is positive and linear. Conversely, if $\Psi:\mathcal{M}_q\to \mathcal{M}_p$ is positive and linear, then
$$\Phi:\mathcal{M}_q^k\to\mathcal{M}_p,\qquad \Phi(A_1,\cdots, A_k)=\Psi(A_1)\otimes A_2\otimes\cdots\otimes A_k$$
is positive and multilinear.\par
%%%%%%%%%%%%%%%%%%%%%%%%%%%%%%%%%%%%%%%%%%%%%%%%%%%%%%%%%%%%%%%%%%

 We state basic properties for positive multilinear mappings:
\begin{lemma}
Every positive multilinear mapping  $\Phi:\mathcal{M}_q^k\to \mathcal{M}_p$ is adjoint-preserving; i.e.,
\[
\Phi(T_1,\ldots,T_k)^* = \Phi(T_1^*,\ldots,T_k^*)
\]
for all $T_i\in \mathcal{M}_q$ $(i=1,\ldots,k)$.
\end{lemma}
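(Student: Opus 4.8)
The plan is to bootstrap from the linear case by using multilinearity one slot at a time. Two ingredients suffice: first, that $\Phi$ carries $k$-tuples of Hermitian matrices to Hermitian matrices; and second, the Cartesian decomposition of an arbitrary matrix into Hermitian parts.

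\emph{Step 1: Hermitian tuples go to Hermitian matrices.} Given Hermitian $H_1,\ldots,H_k\in\mathcal{M}_q$, write $H_j = P_j - N_j$ with $P_j,N_j\geq 0$ (for instance the positive and negative parts arising from the spectral decomposition of $H_j$, or $P_j=H_j+cI$, $N_j=cI$ for a suitable scalar $c$). Expanding $\Phi(H_1,\ldots,H_k)$ by multilinearity in each of the $k$ variables produces a signed sum of $2^k$ terms of the form $\pm\,\Phi(Q_1,\ldots,Q_k)$ with each $Q_j\in\{P_j,N_j\}$. Every such term is positive by the hypothesis on $\Phi$, hence Hermitian, and therefore so is $\Phi(H_1,\ldots,H_k)$.

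\emph{Step 2: passing to arbitrary matrices.} For $T_j\in\mathcal{M}_q$ write $T_j = A_j + \mathrm{i}B_j$ with $A_j=\tfrac12(T_j+T_j^*)$ and $B_j=\tfrac{1}{2\mathrm{i}}(T_j-T_j^*)$ Hermitian, so that $T_j^* = A_j - \mathrm{i}B_j$. Expanding $\Phi(T_1,\ldots,T_k)$ multilinearly gives $\sum_{S\subseteq\{1,\ldots,k\}} \mathrm{i}^{|S|}\,\Phi\big(C_1^S,\ldots,C_k^S\big)$, where $C_j^S=B_j$ for $j\in S$ and $C_j^S=A_j$ otherwise; likewise $\Phi(T_1^*,\ldots,T_k^*)=\sum_{S}(-\mathrm{i})^{|S|}\,\Phi\big(C_1^S,\ldots,C_k^S\big)$. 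By Step 1 each $\Phi(C_1^S,\ldots,C_k^S)$ is Hermitian, so taking the adjoint of the first sum term by term turns $\mathrm{i}^{|S|}$ into $\overline{\mathrm{i}^{|S|}}=(-\mathrm{i})^{|S|}$ while leaving the operator factors unchanged; the outcome is precisely the expansion of $\Phi(T_1^*,\ldots,T_k^*)$, which is the desired identity.

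The only point demanding attention — and it is bookkeeping rather than a genuine obstacle — is organizing the $2^k$-fold multilinear expansions and checking that positivity of $\Phi$ is invoked solely on tuples all of whose entries are positive, as happens in Step 1; no stronger hypothesis is needed.
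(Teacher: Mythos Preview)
Your proof is correct and follows essentially the same approach as the paper: both establish first that $\Phi$ sends Hermitian tuples to Hermitian matrices via the Jordan decomposition $H_j=P_j-N_j$ and multilinear expansion into $2^k$ positive (hence Hermitian) terms. The paper merely asserts that this Hermitian case suffices, whereas you spell out the passage to arbitrary $T_j$ via the Cartesian decomposition $T_j=A_j+\mathrm{i}B_j$; this extra step is routine and your bookkeeping is accurate.
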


\begin{proof}
It suffices to show that $\Phi(A_1,\ldots,A_k)$ is Hermitian for every Hermitian $A_1,\ldots,A_k$. Every Hermitian matrix $A_i$ has a Jordan decomposition
\[
A_i = A_{i+}-A_{i-} \quad \mbox{where $A_{i\pm} \geq 0$}.
\]
Then we have
\begin{align*}
& \Phi(A_1,\ldots,A_k) \\
& = \Phi(A_{1+}-A_{1-},\ldots,A_{k+}-A_{k-}) \\
& = \Phi(A_{1+},\ldots,A_{k+}) - \Phi(A_{1-},A_{2+},\ldots,A_{k+}) + \cdots +(-1)^k \Phi(A_{1-},\ldots,A_{k-})
\end{align*}
and each term is positive and hence $\Phi(A_1,\ldots,A_k)$ is Hermitian.
\end{proof}
%%%%%%%%%%%%%%%%%%%%%%%%%%%%%%%%%%%%%%%%%%%%%%%%%%%%%%%%%%%%%%%%%%%%
The next Lemma is easy to verify.
\begin{lemma}
Every positive multilinear mapping  $\Phi:\mathcal{M}_q^k\to \mathcal{M}_p$ is monotone, i.e.,
\begin{align*}
0\leq A_i\leq & B_i \quad \mbox{for all $i=1,\ldots,k$}\\
& \Longrightarrow \Phi(A_1,A_2,\ldots,A_k) \leq \Phi(B_1,B_2,\ldots,B_k).
\end{align*}
\end{lemma}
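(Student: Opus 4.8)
The plan is to reduce the monotonicity claim to the positivity of $\Phi$ by a standard telescoping argument, exploiting linearity in each slot one coordinate at a time. First I would write the difference $\Phi(B_1,\ldots,B_k)-\Phi(A_1,\ldots,A_k)$ as a sum of $k$ terms in which the transition from the all-$A$ tuple to the all-$B$ tuple is performed one coordinate at a time: in the $j$-th term the first $j-1$ entries are $B_1,\ldots,B_{j-1}$, the last $k-j$ entries are $A_{j+1},\ldots,A_k$, and the $j$-th entry is switched from $A_j$ to $B_j$. Concretely,
\begin{align*}
& \Phi(B_1,\ldots,B_k) - \Phi(A_1,\ldots,A_k) \\
& = \sum_{j=1}^{k} \Big( \Phi(B_1,\ldots,B_{j-1},B_j,A_{j+1},\ldots,A_k) - \Phi(B_1,\ldots,B_{j-1},A_j,A_{j+1},\ldots,A_k) \Big),
\end{align*}
which holds because the interior terms cancel in consecutive pairs.

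Next I would use linearity of $\Phi$ in its $j$-th variable to rewrite the $j$-th summand as
\[
\Phi(B_1,\ldots,B_{j-1},\,B_j-A_j,\,A_{j+1},\ldots,A_k).
\]
Every matrix appearing in this expression is positive: $0\leq A_i\leq B_i$ forces $B_i\geq 0$, the hypothesis gives $A_i\geq 0$, and $A_j\leq B_j$ gives $B_j-A_j\geq 0$. Hence, by the positivity of $\Phi$, each such term is a positive matrix, and since a finite sum of positive matrices is positive, the displayed difference is positive. This is exactly $\Phi(A_1,A_2,\ldots,A_k)\leq \Phi(B_1,B_2,\ldots,B_k)$.

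There is no genuine obstacle here; the only points deserving care are the bookkeeping in the telescoping sum (aligning the partially updated tuples so that the interior terms cancel) and the recognition that positivity of $\Phi$ is being invoked in full: not merely that $\Phi$ maps strictly positive or componentwise-positive tuples to positive matrices in some special case, but that it sends the tuple whose $j$-th slot is the positive matrix $B_j-A_j$ and whose remaining slots carry the positive matrices $B_1,\ldots,B_{j-1},A_{j+1},\ldots,A_k$ to a positive matrix. That is precisely the definition of a positive multilinear mapping, so the argument closes.
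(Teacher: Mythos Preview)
Your proof is correct and is exactly the standard telescoping argument one would expect; the paper itself omits the proof entirely, simply remarking that the lemma ``is easy to verify.'' There is nothing to add.
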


The norm of a   multilinear mapping $\Phi:\mathcal{M}_q^k\to \mathcal{M}_p$  is defined by\[
\| \Phi\| = \sup_{\| A_1\|=\cdots=\|A_k\|=1} \| \Phi(A_1, \ldots, A_k)\| = \sup_{\| A_1 \| \leq 1,\cdots,\|A_k\|\leq1} \| \Phi(A_1, \ldots, A_k) \|.
\]
A version of Russu-Dye theorem (see for example \cite[Theorem 2.3.7]{Bh}) has been proved in \cite{Bh1} as follows:
\begin{theorem} \cite{Bh1}\label{thm-RD}
If $\Phi:\mathcal{M}_q\rightarrow\mathcal{M}_p$ is a unital positive multilinear mapping, then $\| \Phi \|=1$.
\end{theorem}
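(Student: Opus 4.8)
The plan is to reduce, via the classical Russo--Dye theorem, to the case where all the arguments of $\Phi$ are unitary, and then to treat that case by spectrally decomposing the unitaries and factoring the resulting resolution of the identity through a coisometry. Write $\Phi\colon\mathcal{M}_q^k\to\mathcal{M}_p$ for the given unital positive multilinear mapping (the case $k=1$ being the usual Russo--Dye theorem). The inequality $\|\Phi\|\geq1$ is immediate from $\Phi(I,\ldots,I)=I$, so the whole content is the bound $\|\Phi(A_1,\ldots,A_k)\|\leq1$ for arbitrary contractions $A_i$.

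First I would dispose of the unitary case. If $u_1,\ldots,u_k$ are unitary, write $u_i=\sum_j\lambda^{(i)}_jP^{(i)}_j$ by the spectral theorem, with orthogonal projections $P^{(i)}_j$ summing to $I$ and unimodular $\lambda^{(i)}_j$. Multilinearity then gives
\[
\Phi(u_1,\ldots,u_k)=\sum_{j_1,\ldots,j_k}\Bigl(\prod_{i=1}^{k}\lambda^{(i)}_{j_i}\Bigr)C_{j_1\cdots j_k},\qquad C_{j_1\cdots j_k}:=\Phi\bigl(P^{(1)}_{j_1},\ldots,P^{(k)}_{j_k}\bigr),
\]
where each $C_{j_1\cdots j_k}\geq0$ by positivity, $\sum_{j_1,\ldots,j_k}C_{j_1\cdots j_k}=\Phi(I,\ldots,I)=I$ by unitality, and the scalar coefficients are unimodular. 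So the unitary case follows once one proves the elementary fact that $\bigl\|\sum_\nu\mu_\nu C_\nu\bigr\|\leq1$ whenever $\{C_\nu\}$ are positive matrices with $\sum_\nu C_\nu=I$ and $|\mu_\nu|\leq1$. For that I would put $B_\nu=C_\nu^{1/2}$, introduce $R\colon\bigoplus_\nu\mathbb{C}^p\to\mathbb{C}^p$, $R\bigl((\eta_\nu)_\nu\bigr)=\sum_\nu B_\nu\eta_\nu$, observe that $R^*y=(B_\nu y)_\nu$ has $\|R^*y\|^2=\sum_\nu\langle C_\nu y,y\rangle=\|y\|^2$ so that $R$ is a coisometry with $\|R\|=1$, and note that $\sum_\nu\mu_\nu C_\nu=R\,D\,R^*$ with $D$ the diagonal contraction $(\eta_\nu)_\nu\mapsto(\mu_\nu\eta_\nu)_\nu$; the claimed bound is then $\|R\|\,\|D\|\,\|R^*\|\leq1$.

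Next I would reduce the general case to this one. Given contractions $A_i$, the Russo--Dye theorem (\cite[Theorem 2.3.7]{Bh}; indeed a singular value decomposition shows any contraction in $\mathcal{M}_q$ is a finite convex combination of unitaries) lets me write $A_i=\sum_l t^{(i)}_lV^{(i)}_l$ with $t^{(i)}_l\geq0$, $\sum_lt^{(i)}_l=1$, $V^{(i)}_l$ unitary. Multilinearity turns $\Phi(A_1,\ldots,A_k)$ into $\sum_{l_1,\ldots,l_k}\bigl(\prod_it^{(i)}_{l_i}\bigr)\Phi\bigl(V^{(1)}_{l_1},\ldots,V^{(k)}_{l_k}\bigr)$, a convex combination since $\sum_{l_1,\ldots,l_k}\prod_it^{(i)}_{l_i}=\prod_i\bigl(\sum_{l_i}t^{(i)}_{l_i}\bigr)=1$; by the unitary case each summand has norm $\leq1$, hence so does $\Phi(A_1,\ldots,A_k)$, and $\|\Phi\|=1$ follows.

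I expect the unitary case to be the only real obstacle. For a single positive linear map one could invoke the Kadison--Schwarz inequality, but a positive multilinear map need not linearize to a positive map on $\mathcal{M}_q^{\otimes k}$, so that shortcut is not available; what rescues the argument is that a unitary is normal, so that its spectral projections genuinely resolve the identity and $\Phi$ is positive on them, which is precisely what makes the factorization through $R$ possible. The reduction to unitaries in the last step is then routine.
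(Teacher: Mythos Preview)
The paper does not prove this theorem; it is quoted from \cite{Bh1} without argument, so there is no in-paper proof to compare against. Your proof is correct and self-contained. The spectral expansion of each unitary $u_i$ and multilinearity give $\Phi(u_1,\ldots,u_k)=\sum_\nu\mu_\nu C_\nu$ with $C_\nu\geq0$, $\sum_\nu C_\nu=I$, $|\mu_\nu|=1$; your factorization $\sum_\nu\mu_\nu C_\nu=RDR^*$ with $R$ a coisometry and $D$ a diagonal contraction is valid (indeed $RR^*=\sum_\nu C_\nu=I$) and yields the unitary bound. The reduction of arbitrary contractions to finite convex combinations of unitaries is legitimate in $\mathcal{M}_q$, and multilinearity turns this into a convex combination of values at unitary tuples, finishing the argument. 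Nothing is missing.
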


\section{Jensen type inequalities} \label{sec-3}

 If $\Phi$ is a unital positive linear mapping, then the Choi--Davis--Jensen inequality
 \begin{align}\label{CDJ1}
\Phi(f(A))\geq f(\Phi(A))
 \end{align}
  holds for every matrix convex function $f$ on $J$ and  every Hermitian matrix $A$ whose spectrum is contained in $J$. However, this inequality does not hold for a unital positive multilinear mapping in general. For example,  consider the matrix convex function  $f(t)=t^2-t$ and the unital positive multilinear mapping $\Phi(A,B)=A\circ B$. If $A=2I$ and $B=I$, then $2I=f(\Phi(A,B))\nleq\Phi(f(A),f(B))=0$. Another example is the unital positive multilinear mapping $\Psi(A,B)=\frac{1}{p}{\rm Tr}(AB)$ for $A,B\in \mathcal{M}_p$ and $f(\Psi(A,B))\not\leq \Psi(f(A),f(B))$. We consider  an addition hypothesis  on  $f$ under which this inequality holds. For this end, assume that the positive multilinear mapping $\Phi:\mathcal{M}^2_q\rightarrow\mathcal{M}_p$ is defined by $\Phi(A,B)=A\circ B$. If inequality \eqref{CDJ1} is valid for a matrix convex function $f$, then $f(A\circ B)\leq f(A)\circ f(B)$. In particular, $f(xy)\leq f(x)f(y)$ for all $x,y$ in domain of $f$. This turns out to be  a necessary and sufficient condition under which \eqref{CDJ1} holds for every unital positive multilinear mapping.

\begin{definition}
Let $J$ be an interval in ${\Bbb R}$. A real valued continuous function $f$ is super-multiplicative on $J$ if $f(xy)\geq f(x)f(y)$ for all $x,y\in J$, while  $f$ is sub-multiplicative  if $f(xy)\leq f(x)f(y)$ for all $x,y\in J$.
\end{definition}

%%%%%%%%%%%%%%%%%%%%%%%%%%%%%%%%%%%%%%%%%%%%%%%%%%%%%%%%%%%%%%%%%%%%%%555
We present a Choi--Davis--Jensen inequality for positive multilinear mappings.
\begin{theorem} \label{thm-CDJ}
Let $A_i\in\mathcal{M}_q$  $(i=1,\ldots,k)$ be positive matrices and  $\Phi:\mathcal{M}^k_q\rightarrow\mathcal{M}_p$  be a unital positive multilinear mapping. If $f:[0,\infty ) \to\mathbb{R}$ is a sub-multiplicative matrix convex function (resp. a super-multiplicative matrix concave function), then
\begin{align*}
& f(\Phi(A_1,\ldots,A_k)) \leq \Phi(f(A_1),\ldots,f(A_k)) \\
& \qquad \qquad ( resp. \qquad f(\Phi(A_1,\ldots,A_k)) \geq \Phi(f(A_1),\ldots,f(A_k)) ).
\end{align*}
\end{theorem}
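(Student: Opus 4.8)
The plan is to reduce the multilinear statement to a combination of the ordinary Choi--Davis--Jensen inequality \eqref{CDJ1} and the sub-multiplicativity of $f$, by factoring the unital positive multilinear mapping $\Phi$ through an associated unital positive \emph{linear} mapping. Concretely, given positive matrices $A_1,\ldots,A_k$, I would write each $A_i = U_i D_i U_i^*$ with $U_i$ unitary and $D_i\geq 0$ diagonal, so that by multilinearity $\Phi(A_1,\ldots,A_k)$ is built from the ``elementary'' values $\Phi(e_{j_1}^{(1)},\ldots,e_{j_k}^{(k)})$ evaluated on rank-one diagonal projections. The key device is: fix all but one slot and view the result as a linear map in the remaining slot; iterating this, one expects to realise $\Phi(A_1,\ldots,A_k)$ as $\Psi\bigl(g(A_1)\otimes\cdots\otimes g(A_k)\bigr)$ for a suitable unital positive linear map $\Psi$ and the identity function $g$, but more usefully one wants an estimate directly at the level of functional calculus.

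The cleaner route, which I would actually pursue, uses spectral decompositions head-on. Write $A_i = \sum_{j} \lambda_{ij} P_{ij}$ with $\lambda_{ij}\geq 0$ and $P_{ij}$ mutually orthogonal projections summing to $I$. Then multilinearity gives
\begin{align*}
\Phi(A_1,\ldots,A_k) &= \sum_{j_1,\ldots,j_k} \lambda_{1 j_1}\cdots \lambda_{k j_k}\, \Phi(P_{1 j_1},\ldots,P_{k j_k}).
\end{align*}
The matrices $\Phi(P_{1 j_1},\ldots,P_{k j_k})$ are positive and, because $\Phi$ is unital and each family of projections sums to $I$, summing over all indices yields $\sum \Phi(P_{1 j_1},\ldots,P_{k j_k}) = \Phi(I,\ldots,I) = I$. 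So $\{\Phi(P_{1 j_1},\ldots,P_{k j_k})\}$ is a family of positive matrices forming a partition of unity, i.e. it defines a unital positive linear mapping on the (commutative) algebra generated by a variable with values $\mu_{j_1\cdots j_k} := \lambda_{1 j_1}\cdots \lambda_{k j_k}$. Applying the scalar Jensen/Choi--Davis inequality in its operator form to this unital positive linear map and the matrix convex function $f$ gives
\begin{align*}
f\bigl(\Phi(A_1,\ldots,A_k)\bigr) &\leq \sum_{j_1,\ldots,j_k} f(\mu_{j_1\cdots j_k})\, \Phi(P_{1 j_1},\ldots,P_{k j_k}).
\end{align*}
Now invoke sub-multiplicativity on the scalars: $f(\mu_{j_1\cdots j_k}) = f(\lambda_{1 j_1}\cdots \lambda_{k j_k}) \leq f(\lambda_{1 j_1})\cdots f(\lambda_{k j_k})$. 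Since the $\Phi(P_{1 j_1},\ldots,P_{k j_k})$ are positive, this inequality is preserved upon multiplying and summing, so the right-hand side is $\leq \sum \prod_i f(\lambda_{i j_i})\, \Phi(P_{1 j_1},\ldots,P_{k j_k})$, which by multilinearity of $\Phi$ again is exactly $\Phi\bigl(\sum_{j_1} f(\lambda_{1 j_1})P_{1 j_1},\ldots,\sum_{j_k} f(\lambda_{k j_k})P_{k j_k}\bigr) = \Phi(f(A_1),\ldots,f(A_k))$. This chains to the desired inequality; the concave/super-multiplicative case follows by reversing every inequality (or by replacing $f$ by $-f$ and noting the product signs still work because the $f(\lambda_{ij})$ are nonnegative for a concave $f$ on $[0,\infty)$ with $f\geq 0$ — a point one must check, since sub/super-multiplicativity implicitly constrains the sign of $f$).

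The step I expect to be the main obstacle is the passage from $f(\mu) \leq \prod_i f(\lambda_i)$ \emph{as scalars} to the corresponding operator inequality after inserting the $\Phi(P_{1 j_1},\ldots,P_{k j_k})$ and summing: one is replacing each scalar coefficient by a larger scalar while multiplying by a fixed positive matrix, which is fine term by term, but one must be careful that the ``matricization'' $\sum_{j_1,\ldots,j_k} c_{j_1\cdots j_k}\,\Phi(P_{1 j_1},\ldots,P_{k j_k})$ is genuinely monotone in the coefficients $c_{j_1\cdots j_k}\geq 0$ — which it is, precisely because each $\Phi(P_{1 j_1},\ldots,P_{k j_k})\geq 0$ (Lemma on positivity of $\Phi$ on positive arguments). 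A secondary subtlety is justifying the operator Jensen inequality for the unital positive linear map $X\mapsto \sum \phi_{j_1\cdots j_k}(X)\,\Phi(P_{1 j_1},\ldots,P_{k j_k})$ acting on a diagonal matrix whose entries are the $\mu_{j_1\cdots j_k}$; this is just the classical Choi--Davis--Jensen inequality \eqref{CDJ1}, but one should phrase the single-variable ``test matrix'' carefully so that $\Phi(A_1,\ldots,A_k)$ is literally its image. Everything else is bookkeeping with multilinearity and the spectral theorem.
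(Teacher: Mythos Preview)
Your approach is essentially identical to the paper's: spectral decomposition of each $A_i$, expansion of $\Phi(A_1,\ldots,A_k)$ by multilinearity into $\sum \mu_{j_1\cdots j_k}\,\Phi(P_{1j_1},\ldots,P_{kj_k})$, an application of the operator Jensen inequality to the resulting positive partition of unity (the paper writes this with $C(j_1,\ldots,j_k):=\Phi(P_{1j_1},\ldots,P_{kj_k})^{1/2}$ and invokes matrix convexity in the form $f\bigl(\sum C^*aC\bigr)\leq \sum C^*f(a)C$, which is exactly your ``CDJ on the commutative algebra'' step), then sub-multiplicativity on the scalars $f(\lambda_{1j_1}\cdots\lambda_{kj_k})\leq \prod_i f(\lambda_{ij_i})$, and reassembly via multilinearity. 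The only differences are cosmetic, and the caveats you flag (monotonicity in the scalar coefficients, sign issues when iterating sub-multiplicativity for $k\geq 3$) are left equally implicit in the paper.
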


\begin{proof}
Assume that $A_i=\sum_{j=1}^{q_i}\lambda_{ij}P_{ij}$ $(i=1,\cdots, k)$ is the spectral decomposition of each $A_i$ for which $\sum_{j=1}^{q_i}P_{ij}=I$. Then
{\small\begin{align}\label{qq3}
\Phi(f(A_1), f(A_2),\cdots, f(A_k))&=\Phi\left(\sum_{j=1}^{q_1}f(\lambda_{1j})P_{1j},\cdots,\sum_{j=1}^{q_k}f(\lambda_{kj})P_{kj}\right)\\
&=\sum_{j_1}^{q_1}\sum_{j_2}^{q_2}\cdots\sum_{j_k}^{q_k}f(\lambda_{1j_1})f(\lambda_{2j_2})\cdots f(\lambda_{kj_k})\Phi(P_{1j_1},\cdots, P_{kj_k}),\nonumber
\end{align}}
by multilinearity of $\Phi$. With  $C(j_1,\cdots, j_k):=\Phi(P_{1j_1},\cdots, P_{kj_k})^{\frac{1}{2}}$  we get
{\small\begin{align*}
f&\left(\Phi(A_1,\cdots, A_k)\right)\\
&=f\left(\sum_{j_1}^{q_1}\sum_{j_2}^{q_2}\cdots\sum_{j_k}^{q_k}\lambda_{1j_1}\lambda_{2j_2}\cdots \lambda_{kj_k}\Phi(P_{1j_1},\cdots, P_{kj_k})\right)\quad (\mbox{by multilinearity of $\Phi$})\\
&=f\left(\sum_{j_1}^{q_1}\sum_{j_2}^{q_2}\cdots\sum_{j_k}^{q_k}C(j_1,\cdots, j_k)\lambda_{1j_1}\lambda_{2j_2}\cdots \lambda_{kj_k}C(j_1,\cdots, j_k)\right)\\
&\leq \sum_{j_1}^{q_1}\sum_{j_2}^{q_2}\cdots\sum_{j_k}^{q_k}C(j_1,\cdots, j_k)f(\lambda_{1j_1}\lambda_{2j_2}\cdots \lambda_{kj_k})C(j_1,\cdots, j_k)\quad (\mbox{by matrix convexity of $f$})\\
&\leq \sum_{j_1}^{q_1}\sum_{j_2}^{q_2}\cdots\sum_{j_k}^{q_k} f(\lambda_{1j_1})f(\lambda_{2j_2})\cdots f(\lambda_{kj_k})\Phi(P_{1j_1},\cdots, P_{kj_k}) \ (\mbox{by  $f(xy)\leq f(x)f(y)$ on $[0,\infty)$})\\
&=\Phi(f(A_1),\cdots, f(A_k)).
\end{align*}}
\end{proof}
%%%%%%%%%%%%%%%%%%%%%%%%%%%%%%%%%%%%%%%%%%%%%%%%%%%%%%%%%%%%%%%%%%%%%%%%%%%%%%%%%%%

As an example,   the power functions $f(t)=t^r$  are   sub-multiplicative and so  we have the following
 extension for  Kadison and  Choi's   inequalities.

\begin{corollary}\label{co5}
Suppose that $\Phi:\mathcal{M}_q^k\rightarrow\mathcal{M}_p$ is a unital positive multilinear mapping.
\begin{itemize}
\item[(1)] If $0\leq r\leq 1$, then
\begin{eqnarray*}
\Phi(A^r_1,\cdots, A^r_k)\leq\Phi(A_1,\cdots, A_k)^r\qquad (A_i\geq0),
\end{eqnarray*}
\item[(2)] If $-1\leq r\leq0$ and $1\leq r\leq 2$, then
\begin{eqnarray*}
\Phi(A_1,\cdots, A_k)^r\leq\Phi(A^r_1,\cdots, A^r_k) \qquad (A_i>0).
\end{eqnarray*}
\end{itemize}
\end{corollary}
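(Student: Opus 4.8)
The plan is to apply Theorem \ref{thm-CDJ} to the appropriate power functions on $[0,\infty)$, after recording the elementary fact that every power map $t\mapsto t^r$ satisfies $(xy)^r=x^ry^r$ and is therefore simultaneously sub-multiplicative and super-multiplicative on $[0,\infty)$ (or on $(0,\infty)$ when $r<0$). So the whole content reduces to identifying, for each range of $r$, whether $f(t)=t^r$ is matrix convex or matrix concave.

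For part (1), when $0\leq r\leq 1$ the function $f(t)=t^r$ is matrix concave on $[0,\infty)$ (this is the operator concavity of the L\"owner--Heinz functions). It is also super-multiplicative in the required sense (in fact multiplicative), so Theorem \ref{thm-CDJ}, in its ``super-multiplicative matrix concave'' alternative, yields
\begin{align*}
\Phi(A_1^r,\ldots,A_k^r)=\Phi(f(A_1),\ldots,f(A_k))\leq f(\Phi(A_1,\ldots,A_k))=\Phi(A_1,\ldots,A_k)^r,
\end{align*}
which is exactly the claimed inequality, valid for positive (not necessarily invertible) $A_i$ since $[0,\infty)$ is the relevant interval.

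For part (2), when $-1\leq r\leq 0$ or $1\leq r\leq 2$ the function $f(t)=t^r$ is matrix convex (for $1\leq r\leq 2$ on $[0,\infty)$, and for $-1\leq r\leq 0$ on $(0,\infty)$, which forces the invertibility hypothesis $A_i>0$). Again $f$ is multiplicative, hence sub-multiplicative, so the ``sub-multiplicative matrix convex'' case of Theorem \ref{thm-CDJ} gives
\begin{align*}
\Phi(A_1,\ldots,A_k)^r=f(\Phi(A_1,\ldots,A_k))\leq\Phi(f(A_1),\ldots,f(A_k))=\Phi(A_1^r,\ldots,A_k^r),
\end{align*}
as desired. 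I expect no real obstacle here: the only points needing care are the bookkeeping of which interval ($[0,\infty)$ versus $(0,\infty)$) is in play and the correct invocation of the standard operator-convexity/concavity facts for the L\"owner--Heinz power functions; once those are in hand the corollary is an immediate specialization of Theorem \ref{thm-CDJ}. One could also remark that part (1) and part (2) are in principle convertible into each other by replacing $A_i$ with $A_i^{-1}$ and using $\Phi(A_1^{-1},\ldots,A_k^{-1})\geq\Phi(A_1,\ldots,A_k)^{-1}$, but it is cleaner simply to quote Theorem \ref{thm-CDJ} directly in each case.
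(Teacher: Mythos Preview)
Your proposal is correct and matches the paper's approach exactly: the paper gives no explicit proof, only the one-line remark that the power functions $t\mapsto t^r$ are sub-multiplicative, leaving it implicit that Corollary~\ref{co5} follows from Theorem~\ref{thm-CDJ} together with the standard matrix convexity/concavity of $t^r$ on the indicated ranges. Your write-up simply makes this explicit, including the small domain caveat (that for $-1\le r<0$ one works on $(0,\infty)$ and needs $A_i>0$), which the paper glosses over.
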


\begin{corollary}
Let $\Phi:\mathcal{M}_q^k\rightarrow\mathcal{M}_p$ be a uninal positive multilinear map. If either $s\leq t$, $s\not\in (-1,1), t\not\in (-1,1)$ or $\frac{1}{2}\leq s\leq 1\leq t$ or $s\leq -1\leq t\leq -\frac{1}{2}$, then
\begin{eqnarray*}
\Phi(A^s_1,\cdots, A^s_k)^{\frac{1}{s}}\leq\Phi(A^t_1,\cdots, A^t_k)^{\frac{1}{t}}
\end{eqnarray*}
for all positive matrices $A_1,\cdots, A_k$.
\end{corollary}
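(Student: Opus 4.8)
The plan is to bootstrap the result from Corollary~\ref{co5} by the familiar ``power‑mean monotonicity'' argument, using in addition two standard facts: the L\"owner--Heinz inequality, that $X\mapsto X^{\alpha}$ is order preserving on positive matrices for $0\le\alpha\le1$; and that $X\mapsto X^{-1}$ reverses the order on strictly positive matrices, so that $X\mapsto X^{\beta}$ is order reversing for $-1\le\beta<0$. Write $P_u:=\Phi(A_1^u,\ldots,A_k^u)$, so the assertion is $P_s^{1/s}\le P_t^{1/t}$. Substituting $A_i^u$ for $A_i$ in Corollary~\ref{co5} gives, for every $u\ne0$, (a) $P_{ur}\le P_u^{\,r}$ whenever $0\le r\le1$, and (b) $P_u^{\,r}\le P_{ur}$ whenever $r\in[-1,0]\cup[1,2]$. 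In each case I choose $u,r$ with $\{ur,u\}=\{s,t\}$ (possibly after routing through an intermediate exponent), invoke (a) or (b), and then raise both sides to the power $1/s$ or $1/t$, keeping track of whether that power is order preserving (positive exponent $\le1$) or order reversing (negative exponent of modulus $\le1$).

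Two configurations are handled in a single step. If $1\le s\le t$, take $u=t$, $r=s/t\in(0,1]$; then (a) gives $P_s\le P_t^{\,s/t}$, and since $0<1/s\le1$ the map $X\mapsto X^{1/s}$ yields $P_s^{1/s}\le P_t^{1/t}$. If $s\le t\le-1$, take $u=s$, $r=t/s\in(0,1]$; then (a) gives $P_t\le P_s^{\,t/s}$, and since $t<0$ and $|1/t|\le1$ the order‑reversing map $X\mapsto X^{1/t}$ yields $P_t^{1/t}\ge P_s^{1/s}$. I also record three building blocks obtained the same way: $P_s^{1/s}\le P_1$ for $\tfrac12\le s\le1$ (apply (b) with $u=s$, $r=1/s\in[1,2]$); $P_{-1}^{-1}\le P_t^{1/t}$ for $-1\le t\le-\tfrac12$ (apply (b) with $u=t$, $r=-1/t\in[1,2]$, then invert); and the multilinear Choi inequality $P_{-1}^{-1}\le P_1$ (apply (b) with $u=1$, $r=-1$, then invert).

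The remaining three configurations follow by chaining these pieces and transitivity of $\le$. For $\tfrac12\le s\le1\le t$: $P_s^{1/s}\le P_1\le P_t^{1/t}$, the first inequality being the building block and the second the case $1\le s\le t$ with $s=1$. For $s\le-1\le t\le-\tfrac12$: $P_s^{1/s}\le P_{-1}^{-1}\le P_t^{1/t}$, the first being the case $s\le t\le-1$ with $t=-1$ and the second the building block. For $s\le-1\le1\le t$: $P_s^{1/s}\le P_{-1}^{-1}\le P_1\le P_t^{1/t}$. I expect the only delicate point to be bookkeeping --- identifying, in each configuration, which substituted power map is order preserving versus order reversing and checking that the chosen $r$ lies in the interval where Corollary~\ref{co5} is available; there is no analytic obstacle beyond what Corollary~\ref{co5} and L\"owner--Heinz already supply. (Implicitly, when a negative exponent occurs the matrices $A_i$ must be invertible, exactly as in Corollary~\ref{co5}(2).)
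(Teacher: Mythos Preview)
Your argument is correct and follows the same strategy as the paper's proof: substitute $A_i^u$ into Corollary~\ref{co5} to compare $P_s$ with a power of $P_t$, then apply the L\"owner--Heinz inequality (or the order-reversing property of negative powers) to extract the $1/s$- and $1/t$-th roots. In fact your write-up is more complete than the paper's, which only spells out the sub-case $1\le s\le t$ and leaves the remaining configurations (in particular the negative-exponent cases and the chaining through $P_1$ and $P_{-1}^{-1}$) implicit.
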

\begin{proof}
If $0\leq s\leq t$, then  $r=\frac{s}{t}\leq 1$.
 Applying Corollary \ref{co5} with $A_i^t$ instead of $A_i$  now implies that \begin{eqnarray*}
\Phi(A^s_1,\cdots, A^s_k)\leq\Phi(A^t_1,\cdots, A^t_k)^{\frac{s}{t}}.
\end{eqnarray*}
Applying the L$\ddot{o}$wner-Heinz inequality with the power $\frac{1}{s}$ yields that
\begin{eqnarray*}
\Phi(A^s_1,\cdots, A^s_k)^{\frac{1}{s}}\leq\Phi(A^t_1,\cdots, A^t_k)^{\frac{1}{t}}.
\end{eqnarray*}
\end{proof}
%%%%%%%%%%%%%%%%%%%%%%%%%%%%%%%%%%%%%%%%%%%%%%%%%%%%%%%%%%%%%%%%%%%%%%%%%%%%%%%%%%%%%%%%%%%%%%

\begin{example}
The matrix concave function $f(t)=\log t$ defined on $(0,\infty)$ is super-multiplicative on $[1,e^2]$. For if $x,y\in[1,e^2]$, then $\log x,\log y\in [0,2]$ and so $(\log x)(\log y)\leq 4$. Hence
$(\log x)^2(\log y)^2\leq 4(\log x)(\log y)$
and so
$$f(x)f(y)=\log x \log y\leq 2 \sqrt{(\log x)(\log y)}\leq \log x+\log y=\log (xy)=f(xy),$$
where we used the arithmetic-geometric mean inequality. Now applying Theorem \ref{thm-CDJ} we get
$$\log (A_1\otimes A_2)\geq \log (A_1)\otimes \log (A_2)$$
for all positive matrices $A_i$ with eigenvalues in $[1,e^2]$. However the inequality $\log(xyz)\leq \log x \log y \log z$ does not hold for $x,y,x\in [1,e^2]$ in general.
\end{example}

%%%%%%%%%%%%%%%%%%%%%%%%%%%%%%%%%%%%%%%%%%%%%%%%%%%%%%%%%%%%%%%%%%%%%%%%%%%%%%%%%%%%%%%%%%%%%%
We recall the theory of matrix means by Kubo--Ando\cite{KA1}. A key for the theory is that there is a one-to-one correspondence between the matrix mean $\sigma$ and the nonnegative matrix monotone function $f(t)$ on $(0,\infty )$ with $f(1)=1$ by the formula
\[
f(t)=1 \ \sigma \ t \qquad \mbox{for all $t>0$},
\]
or
\[
A\ \sigma \ B = A^{\frac{1}{2}}f(A^{-\frac{1}{2}}BA^{-\frac{1}{2}})A^{\frac{1}{2}} \quad \mbox{for all $A,B>0$.}
\]
We say that $f$ is the representing function for $\sigma$. In this case, notice that $f(t)$ is matrix monotone if and only if it is matrix concave \cite{FMPS}. Moreover, every matrix mean $\sigma$ is subadditive: $A\ \sigma\ C+B\ \sigma \ D \leq (A+B)\ \sigma \ (C+D)$.
By a theorem of Ando \cite{An1}, if  $A$ and $B$ are positive matrices and $\Phi$ is a strictly positive linear mapping, then for each $\a \in [0,1]$
\begin{equation} \label{eq:Ando}
\Phi(A\s_{\a} B) \leq \Phi(A)\s_{\a} \Phi(B),
\end{equation}
where the $\a$-geometric matrix mean is defined by
\[
A\s_{\a} B = A^{\frac{1}{2}}\left( A^{-\frac{1}{2}}BA^{-\frac{1}{2}} \right)^{\a}A^{\frac{1}{2}}
\]
and the geometric matrix mean $\s$ is defined by $\s = \s_{1/2}$, namely
\[
A\s B = A^{\frac{1}{2}}\left( A^{-\frac{1}{2}}BA^{-\frac{1}{2}} \right)^{\frac{1}{2}}A^{\frac{1}{2}}.
\]
By  aid  of  Theorem \ref{thm-CDJ}, we show the positive multilinear mapping version of Ando's inequality \eqref{eq:Ando}.
%%%%%%%%%%%%%%%%%%%%%%%%%%%%%%%%%%%%%%%%%%%%%%%%%%%%%%%%%%%%%%%%%%%%%%%%%%%%%%%%%%%%%%%%%%%%%%%%%%%%

%%%%%%%%%%%%%%%%%%%%%%%%%%%%%%%%%%%%%%%%%%%%%%%%%%%%%%%%%%%%%%%%%%%%%%%%%%%%%%%%%
\begin{proposition} \label{thm-s}
Let $\sigma$ be a matrix mean with a super-multiplicative representing function $f:[0,\infty)\rightarrow[0,\infty)$, and $\Phi:\mathcal{M}_q^k\rightarrow\mathcal{M}_p$ be a strictly positive multilinear mapping. Then
\begin{align*}\label{Eq}
\Phi(A_1\sigma B_1,\cdots , A_k\sigma B_k)\leq\Phi(A_1,\cdots , A_k)\sigma\Phi(B_1,\cdots , B_k)
\end{align*}
for all $A_1,\cdots,A_k>0$ and $B_1,\cdots, B_k\geq0$.
\end{proposition}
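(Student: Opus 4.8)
The plan is to reduce the multilinear Ando-type inequality to the Choi--Davis--Jensen inequality of Theorem \ref{thm-CDJ} via the usual congruence trick used in the linear case. First I would treat the generic case where all $B_i>0$ as well, and deal with the boundary case $B_i\geq 0$ at the end by a continuity/limiting argument (replacing $B_i$ by $B_i+\varepsilon I$ and letting $\varepsilon\downarrow 0$, using that $\Phi$ is continuous and that $\sigma$ and the relevant operator functions are continuous on strictly positive matrices). So assume $A_i,B_i>0$ for all $i$.

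Set $X:=\Phi(A_1,\dots,A_k)$, which is strictly positive since $\Phi$ is strictly positive. The key identity is multiplicativity of $\Phi$ under the congruence by the $A_i^{1/2}$: by multilinearity,
\begin{align*}
\Phi(A_1,\dots,A_k) &= \Phi\bigl(A_1^{1/2}IA_1^{1/2},\dots,A_k^{1/2}IA_k^{1/2}\bigr),
\end{align*}
and more importantly, writing $C_i:=A_i^{-1/2}B_iA_i^{-1/2}$, one checks using the spectral decomposition argument (exactly as in the proof of Theorem \ref{thm-CDJ}, expanding $A_i^{1/2}C_iA_i^{1/2}$ in terms of rank-one spectral projections and using multilinearity term by term) that there is a single strictly positive matrix $Y$ with $X=\Phi(A_1,\dots,A_k)$ and
\begin{align*}
\Phi(A_1\sigma B_1,\dots,A_k\sigma B_k) &= \Phi\bigl(A_1^{1/2}f(C_1)A_1^{1/2},\dots,A_k^{1/2}f(C_k)A_k^{1/2}\bigr)\\
&\leq X^{1/2}\,f\!\bigl(X^{-1/2}\Phi(B_1,\dots,B_k)X^{-1/2}\bigr)\,X^{1/2},
\end{align*}
where the middle step is where Theorem \ref{thm-CDJ} enters. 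Concretely, I would apply Theorem \ref{thm-CDJ} to the unital positive multilinear mapping $\Psi(T_1,\dots,T_k):=X^{-1/2}\Phi(A_1^{1/2}T_1A_1^{1/2},\dots,A_k^{1/2}T_kA_k^{1/2})X^{-1/2}$ (which is indeed unital: $\Psi(I,\dots,I)=X^{-1/2}XX^{-1/2}=I$, and positive and multilinear since $\Phi$ is), and to the matrix concave super-multiplicative function $f$ with the positive arguments $C_1,\dots,C_k$. This gives $f(\Psi(C_1,\dots,C_k))\geq \Psi(f(C_1),\dots,f(C_k))$, i.e.
\begin{align*}
f\!\bigl(X^{-1/2}\Phi(B_1,\dots,B_k)X^{-1/2}\bigr) &\geq X^{-1/2}\Phi(A_1\sigma B_1,\dots,A_k\sigma B_k)X^{-1/2},
\end{align*}
using $\Phi(A_i^{1/2}C_iA_i^{1/2},\dots)=\Phi(B_1,\dots,B_k)$ and $\Phi(A_i^{1/2}f(C_i)A_i^{1/2},\dots)=\Phi(A_1\sigma B_1,\dots,A_k\sigma B_k)$ by multilinearity and the definition $A\sigma B=A^{1/2}f(A^{-1/2}BA^{-1/2})A^{1/2}$. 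Congruence-transforming back by $X^{1/2}$ and recognizing the right-hand side as $X\,\sigma\,\Phi(B_1,\dots,B_k)=\Phi(A_1,\dots,A_k)\,\sigma\,\Phi(B_1,\dots,B_k)$ yields the claimed inequality.

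The main obstacle I anticipate is justifying the two multilinear identities $\Phi(A_1^{1/2}C_1A_1^{1/2},\dots,A_k^{1/2}C_kA_k^{1/2})=\Phi(B_1,\dots,B_k)$ and its $f$-analogue cleanly: $\Phi$ is only multilinear, not an algebra homomorphism, so one cannot simply ``pull out'' the $A_i^{1/2}$ factors. The honest route is the spectral expansion already used in Theorem \ref{thm-CDJ} — write each $A_i^{1/2}T_iA_i^{1/2}$ against the eigenbasis of $A_i$, distribute $\Phi$ over the finite sums by multilinearity, and observe that the resulting expression for the left side with $T_i=C_i$ collapses to $\Phi(B_1,\dots,B_k)$ because $A_i^{1/2}C_iA_i^{1/2}=B_i$ as an ordinary matrix product, so no factorization through $\Phi$ is needed. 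Once this bookkeeping is in place, the verification that $\Psi$ is a bona fide unital positive multilinear mapping and the final identification of the right-hand side as the matrix mean are routine, and the $B_i\geq 0$ boundary case follows by the limiting argument mentioned above.
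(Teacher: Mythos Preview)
Your proof is correct and is essentially the paper's proof: define the auxiliary unital positive multilinear map $\Psi(T_1,\dots,T_k)=\Phi(A_1,\dots,A_k)^{-1/2}\Phi(A_1^{1/2}T_1A_1^{1/2},\dots,A_k^{1/2}T_kA_k^{1/2})\Phi(A_1,\dots,A_k)^{-1/2}$ and apply Theorem~\ref{thm-CDJ} with $Y_i=A_i^{-1/2}B_iA_i^{-1/2}$. The ``main obstacle'' you flag is not one: the identities $A_i^{1/2}C_iA_i^{1/2}=B_i$ and $A_i^{1/2}f(C_i)A_i^{1/2}=A_i\sigma B_i$ are ordinary matrix identities holding \emph{before} $\Phi$ is applied (as you yourself note at the end), so no spectral bookkeeping is needed, and since Theorem~\ref{thm-CDJ} only requires $Y_i\geq 0$ the limiting argument for $B_i\geq 0$ is also unnecessary.
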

\begin{proof}
For  $A_1,\cdots, A_k>0$ assume that a map $\Psi:\mathcal{M}_q^k\rightarrow\mathcal{M}_p$ is defined by
\begin{eqnarray*}
\Psi(X_1,\cdots, X_k):=\Phi(A_1,\cdots, A_k)^{-\frac{1}{2}}\Phi\left(A^{\frac{1}{2}}_1X_1A^{\frac{1}{2}}_1,\cdots , A^{\frac{1}{2}}_kX_kA^{\frac{1}{2}}_k\right)\Phi(A_1,\cdots, A_K)^{-\frac{1}{2}}.
\end{eqnarray*}
Clearly $\Psi$ is  unital strictly positive and  multilinear. For every $i=1,\cdots,k$, put $Y_i:=A^{-\frac{1}{2}}_iB_iA_i^{-\frac{1}{2}}$\ $(i=1,\cdots k)$.
 Then it follows from matrix concavity of $f$ and Theorem~\ref{thm-CDJ} that
\begin{align*}
\Phi(A_1,\cdots, A_k)^{-\frac{1}{2}}&\Phi (A_1\sigma B_1,\cdots , A_k\sigma B_k)\Phi(A_1,\cdots, A_k)^{-\frac{1}{2}}\\
&=\Psi\left(f(Y_1),\cdots, f(Y_k)\right)\\
&\leq f\big(\Psi(Y_1,\cdots, Y_k)\big)\\
&=f\left(\Phi(A_1,\cdots, A_k)^{-\frac{1}{2}}\Phi(B_1,\cdots, B_k)\Phi(A_1,\cdots, A_k)^{-\frac{1}{2}}\right),
\end{align*}
which completes the proof.
\end{proof}
%%%%%%%%%%%%%%%%%%%%%%%%%%%%%%%%%%%%%%%%%%%%%%%%%%%%%%%%%%%%%%%%%%%%%%%%%%%%%5
%%%%%%%%%%%%%%%%%%%%%%%%%%%%%%%%%%%%%%%%%%%%%%%%%%%%%%%%%%%%%%%%%%%%%%%%%%%%%%%%%%%%%%%%%

%%%%%%%%%%%%%%%%%%%%%%%%%%%%%%%%%%%%%%%%%%%%%%%%%%%
%%%%%%%%%%%%%%%%%%%%%%%%%%%%%%%%%%%%%%%%%%%%
For a matrix mean $\sigma$ with the representing function $f$, assume that $\sigma^{o}$, the transpose of $\sigma$  is defined by $A\ \sigma^{o} \ B=B \ \sigma \ A$. In this case, the representing function of $\sigma^{o}$ is $f^{o}$ defined by $f^{o}(t)=tf(1/ t)$. For $\alpha\in[0,1]$, the transpose of geometric mean $\sharp_\alpha$ turns out to be $\sharp_{1-\alpha}$.  Ando \cite{An1} showed that if $A,B>0$, then
\[
A \ \circ \ B \geq (A\ \s_{\a} \ B)\ \circ \ ( A\ \s_{1-\a}\ B)
\]
for every $\alpha\in[0,1]$.   We state a generalization of this result using positive multilinear mappings.
\begin{corollary} \label{co-ML-OP}
Let $\sigma$ be a matrix mean with the representing function $f$ which is super-multiplicative and not affine. Let $A,B$ be positive matrices. If $\Phi:\mathcal{M}_q^2\rightarrow\mathcal{M}_p$ is a unital positive multilinear mapping, then
\[
\Phi(A,B) + \Phi(B,A) \geq \Phi(A \ \sigma \ B, A \ \sigma^{o} \ B) + \Phi(A \ \sigma^{o}  \ B, A \ \sigma \ B).
\]
In particular, for each $\a \in [0,1]$
\[
\Phi(A,B)+\Phi(B,A) \geq \Phi(A \ \s_{\a} \ B, A \ \s_{1-\a} \ B) + \Phi(A \ \s_{1-\a} \ B, A \ \s_{\a} \ B).
\]
\end{corollary}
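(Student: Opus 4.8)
The plan is to derive the inequality by invoking Proposition~\ref{thm-s} twice and then closing with the subadditivity of the matrix mean $\sigma$; the passage from the strictly positive to the merely positive setting is a routine perturbation. To set that up, I would first reduce to the case in which $\Phi$ is \emph{strictly} positive and $A,B>0$, so that Proposition~\ref{thm-s} is applicable. Let $\Psi_{0}\colon\mathcal M_{q}^{2}\to\mathcal M_{p}$ be the positive multilinear map $\Psi_{0}(X,Y)=\mathrm{Tr}(X)\,\mathrm{Tr}(Y)\,I$ from the examples of Section~2; since $\mathrm{Tr}(X)\,\mathrm{Tr}(Y)>0$ whenever $X,Y>0$, the map $\Phi_{\varepsilon}:=\Phi+\varepsilon\Psi_{0}$ is strictly positive and multilinear for every $\varepsilon>0$. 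Replace also $A,B$ by $A_{\delta}:=A+\delta I>0$ and $B_{\delta}:=B+\delta I>0$. Once the asserted inequality is proved for $\Phi_{\varepsilon},A_{\delta},B_{\delta}$, I would let $\delta\to0^{+}$ — using that $A_{\delta}\ \sigma\ B_{\delta}\to A\ \sigma\ B$ and $A_{\delta}\ \sigma^{o}\ B_{\delta}\to A\ \sigma^{o}\ B$ by the definition of matrix means of positive matrices, together with the automatic norm-continuity of the multilinear map $\Phi_{\varepsilon}$ — and then $\varepsilon\to0^{+}$, using $\Phi_{\varepsilon}\to\Phi$, to obtain the general statement.

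So assume $\Phi$ is strictly positive and $A,B>0$. Applying Proposition~\ref{thm-s} with $k=2$ to the data $A_{1}=A$, $A_{2}=B$, $B_{1}=B$, $B_{2}=A$ gives
\[
\Phi(A\ \sigma\ B,\ B\ \sigma\ A)\ \le\ \Phi(A,B)\ \sigma\ \Phi(B,A),
\]
and applying it to the swapped data $A_{1}=B$, $A_{2}=A$, $B_{1}=A$, $B_{2}=B$ gives
\[
\Phi(B\ \sigma\ A,\ A\ \sigma\ B)\ \le\ \Phi(B,A)\ \sigma\ \Phi(A,B).
\]
Since $B\ \sigma\ A=A\ \sigma^{o}\ B$, summing these yields
\[
\Phi(A\ \sigma\ B,\ A\ \sigma^{o}\ B)+\Phi(A\ \sigma^{o}\ B,\ A\ \sigma\ B)\ \le\ \Phi(A,B)\ \sigma\ \Phi(B,A)+\Phi(B,A)\ \sigma\ \Phi(A,B).
\]
On the other hand, the subadditivity of $\sigma$ recalled in Section~\ref{sec-3}, applied with $X:=\Phi(A,B)\ge0$ and $Y:=\Phi(B,A)\ge0$, together with the idempotency $Z\ \sigma\ Z=Z$ of every matrix mean, gives
\[
X\ \sigma\ Y+Y\ \sigma\ X\ \le\ (X+Y)\ \sigma\ (X+Y)\ =\ X+Y.
\]
Combining the last two displays gives exactly
\[
\Phi(A,B)+\Phi(B,A)\ \ge\ \Phi(A\ \sigma\ B,\ A\ \sigma^{o}\ B)+\Phi(A\ \sigma^{o}\ B,\ A\ \sigma\ B),
\]
which is the claim. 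For the final assertion I would take $\sigma=\sharp_{\alpha}$, whose representing function $t^{\alpha}$ is multiplicative — hence super-multiplicative — and whose transpose is $\sharp_{1-\alpha}$; the degenerate values $\alpha\in\{0,1\}$ reduce the two sides to the same expression.

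I do not expect a genuine obstacle: the two applications of Proposition~\ref{thm-s} are immediate and subadditivity is quoted. The only step requiring care is the perturbation — one must exhibit an auxiliary map making $\Phi_{\varepsilon}$ strictly positive rather than merely positive, and verify that each matrix entering the inequality, notably $A_{\delta}\ \sigma\ B_{\delta}$ and $A_{\delta}\ \sigma^{o}\ B_{\delta}$, depends continuously on $\delta$ and $\varepsilon$ as these tend to $0$. It may also be worth remarking that neither the unitality of $\Phi$ nor the non-affinity of $f$ is actually used in this argument; the latter hypothesis, combined with super-multiplicativity, only serves to exclude the trivial means $A\ \sigma\ B=A$ and $A\ \sigma\ B=B$, for which the asserted inequality is a vacuous identity.
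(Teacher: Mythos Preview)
Your argument is correct and uses the same two ingredients as the paper's proof: two applications of Proposition~\ref{thm-s} and the subadditivity-plus-idempotency step $X\ \sigma\ Y+Y\ \sigma\ X\le (X+Y)\ \sigma\ (X+Y)=X+Y$, just assembled in the opposite order. The one genuine addition is your perturbation argument: Proposition~\ref{thm-s} is stated for \emph{strictly} positive $\Phi$ and $A_i>0$, whereas the corollary only assumes $\Phi$ unital positive and $A,B\ge0$, so the paper's proof silently applies Proposition~\ref{thm-s} outside its stated hypotheses, a gap you close explicitly.
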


\begin{proof}
It follows from subadditivity of $\sigma$ and Theorem~\ref{thm-s} that
\begin{align*}
\Phi(A,B) + \Phi(B,A) & = \left[ \Phi(A,B)+\Phi(B,A) \right] \sigma \left[ \Phi(B,A)+\Phi(A,B) \right] \\
& \geq \Phi(A,B) \sigma \Phi(B,A) + \Phi(B,A) \sigma \Phi(A,B) \\
& \geq \Phi(A\sigma B, B\sigma A) + \Phi(B\sigma A, A\sigma B) \\
& =  \Phi(A \sigma B, A \sigma^{o} B) + \Phi(A \sigma^{o} B, A \sigma B).
\end{align*}
\end{proof}

%%%%%%%%%%%%%%%%%%%%%%%%%%%%%%%%%%%%%%%%%%%%%%%%%%%%%%%%%%%%%%%%%%%%%%%%%%%

If $A$ is positive, then $A\circ A^{-1}\geq I$. This inequality is known as  Fiedler's inequality, see  \cite{Bh,Fiedler}. As an application of  Theorem \ref{thm-s} , we have the following extension of  Fiedler's inequality by Aujla-Vasudeva \cite{AV1}.
\begin{proposition}
Let $\Phi:\mathcal{M}_q^2\rightarrow\mathcal{M}_p$ be a positive multilinear map. If $\alpha,\beta\in\mathbb{R}$ and $\lambda\in[0,1]$, then
\begin{eqnarray*}
\Phi(A^{\alpha},A^{\beta})+\Phi(A^{\beta}, A^{\alpha})\geq \Phi(A^{(1-\lambda)\alpha+\lambda\beta},A^{(1-\lambda)\beta+\lambda\alpha}) + \Phi(A^{(1-\lambda)\beta+\lambda\alpha},A^{(1-\lambda)\alpha+\lambda\beta})
\end{eqnarray*}
for all matrices $A>0$.
\end{proposition}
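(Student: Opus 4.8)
The plan is to read this off from Corollary~\ref{co-ML-OP} (equivalently, from Proposition~\ref{thm-s}) by taking $\sigma$ to be the weighted geometric mean $\sharp_\lambda$. Its representing function is $f(t)=t^{\lambda}$ on $[0,\infty)$, which is matrix monotone with $f(1)=1$, is (super-)multiplicative since $f(st)=s^{\lambda}t^{\lambda}=f(s)f(t)$, and fails to be affine precisely when $\lambda\in(0,1)$. The endpoint cases $\lambda\in\{0,1\}$ are trivial, since then $\big((1-\lambda)\alpha+\lambda\beta,\,(1-\lambda)\beta+\lambda\alpha\big)$ equals $(\alpha,\beta)$ or $(\beta,\alpha)$ and the claimed inequality degenerates to an equality; so assume $\lambda\in(0,1)$.

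The whole point is that $A^{\alpha}$ and $A^{\beta}$ commute, being continuous functions of the single strictly positive matrix $A$. Consequently the geometric mean collapses to addition of exponents:
\[
A^{\alpha}\ \sharp_\lambda\ A^{\beta}
=(A^{\alpha})^{1/2}\big((A^{\alpha})^{-1/2}A^{\beta}(A^{\alpha})^{-1/2}\big)^{\lambda}(A^{\alpha})^{1/2}
=A^{\alpha/2}\,A^{\lambda(\beta-\alpha)}\,A^{\alpha/2}
=A^{(1-\lambda)\alpha+\lambda\beta},
\]
and since $\sharp_\lambda^{o}=\sharp_{1-\lambda}$,
\[
A^{\alpha}\ \sharp_\lambda^{o}\ A^{\beta}=A^{\beta}\ \sharp_\lambda\ A^{\alpha}=A^{(1-\lambda)\beta+\lambda\alpha}.
\]
Feeding $\sigma=\sharp_\lambda$ and the positive matrices $A^{\alpha},A^{\beta}$ into Corollary~\ref{co-ML-OP} and substituting these two identities turns its conclusion into exactly the asserted inequality. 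If one would rather not invoke the corollary, the same three lines do it directly: by $X\,\sharp_\lambda\,X=X$ and subadditivity of the mean,
\[
\Phi(A^{\alpha},A^{\beta})+\Phi(A^{\beta},A^{\alpha})\ \geq\ \Phi(A^{\alpha},A^{\beta})\,\sharp_\lambda\,\Phi(A^{\beta},A^{\alpha})+\Phi(A^{\beta},A^{\alpha})\,\sharp_\lambda\,\Phi(A^{\alpha},A^{\beta}),
\]
and Proposition~\ref{thm-s}, which applies because $t\mapsto t^{\lambda}$ is super-multiplicative, bounds the two summands below by $\Phi\big(A^{\alpha}\,\sharp_\lambda\,A^{\beta},\,A^{\beta}\,\sharp_\lambda\,A^{\alpha}\big)$ and $\Phi\big(A^{\beta}\,\sharp_\lambda\,A^{\alpha},\,A^{\alpha}\,\sharp_\lambda\,A^{\beta}\big)$ respectively.

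The one point needing attention --- and the only real obstacle, a mild one --- is that Proposition~\ref{thm-s} and Corollary~\ref{co-ML-OP} are phrased for strictly positive (resp.\ unital positive) multilinear maps, whereas here $\Phi$ is only positive; in particular $\Phi(A^{\alpha},A^{\beta})$ may be singular, so the auxiliary map $\Psi$ used in the proof of Proposition~\ref{thm-s} is not directly available. I would dispose of this by the usual perturbation: replace $\Phi$ by $\Phi_{\varepsilon}(X,Y):=\Phi(X,Y)+\varepsilon\,\mathrm{Tr}(X)\,\mathrm{Tr}(Y)\,I$, which for $\varepsilon>0$ is a strictly positive multilinear map (it adds to $\Phi$ an $\varepsilon$-multiple of the trace-type strictly positive multilinear map of Section~2), run the argument above for $\Phi_{\varepsilon}$, and let $\varepsilon\downarrow0$; each of the four terms converges to the corresponding term for $\Phi$, so the inequality passes to the limit. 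Beyond this bookkeeping, the proof is a one-step substitution once the exponent identities are recorded.
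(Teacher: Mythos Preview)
Your argument is essentially the paper's own proof: write $\Phi(A^{\alpha},A^{\beta})+\Phi(A^{\beta},A^{\alpha})$ as its own $\sharp_\lambda$-mean, use subadditivity of $\sharp_\lambda$, then apply Proposition~\ref{thm-s} and the identity $A^{\alpha}\,\sharp_\lambda\,A^{\beta}=A^{(1-\lambda)\alpha+\lambda\beta}$ for commuting powers. Your perturbation $\Phi_\varepsilon=\Phi+\varepsilon\,\mathrm{Tr}(\cdot)\mathrm{Tr}(\cdot)I$ to recover strict positivity is a nice addition, since the paper silently applies Proposition~\ref{thm-s} (which demands a strictly positive map) to a $\Phi$ assumed merely positive; so you have in fact closed a small gap the paper leaves open.
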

\begin{proof}
We have
\begin{align*}
& \Phi(A^{\alpha},A^{\beta})+\Phi(A^{\beta}, A^{\alpha}) \\
&=\left(\Phi(A^{\alpha}, A^{\beta})+\Phi(A^{\beta}, A^{\alpha})\right)\sharp_{\l}\left(\Phi(A^{\beta}, A^{\alpha})+\Phi(A^{\alpha}, A^{\beta})\right)\\
&\geq \Phi(A^{\alpha}, A^{\beta})\sharp_{\l}\Phi(A^{\beta}, A^{\alpha})+\Phi(A^{\beta}, A^{\alpha})\sharp_{\l}\Phi(A^{\alpha}, A^{\beta})\\
&\geq \Phi(A^{\alpha}\sharp_{\l} A^{\beta}, A^{\beta}\sharp_{\l} A^{\alpha})+\Phi(A^{\beta}\sharp_{\l} A^{\alpha}, A^{\alpha}\sharp_{\l} A^{\beta})\\
&= \Phi(A^{(1-\lambda)\alpha+\lambda\beta},A^{(1-\lambda)\beta+\lambda\alpha}) + \Phi(A^{(1-\lambda)\beta+\lambda\alpha},A^{(1-\lambda)\alpha+\lambda\beta}).
\end{align*}
\end{proof}
%%%%%%%%%%%%%%%%%%%%%%%%%%%%%%%%%%%%%%%%%%%%%%%%%%%%%%%%%%%%%%
With $\a=1$ and $\b=-1$ and $\lambda=\frac{1}{2}$ we get the following result.
\begin{corollary}\label{lem4}
Let $\Phi:\mathcal{M}_q^2\rightarrow\mathcal{M}_p$ be a strictly positive multilinear map. If $A>0$, then
\begin{eqnarray*}
\Phi(A, A^{-1})+\Phi(A^{-1}, A)\geq 2I_p.
\end{eqnarray*}
\end{corollary}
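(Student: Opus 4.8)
The plan is simply to feed the parameters $\alpha=1$, $\beta=-1$, $\lambda=\tfrac12$ into the preceding Proposition. First I would check that these are admissible: since $A>0$, the matrices $A^{\alpha}=A$ and $A^{\beta}=A^{-1}$ are well defined and strictly positive, and $\lambda=\tfrac12\in[0,1]$, so the hypotheses of the Proposition are met. The reason strict positivity of $\Phi$ is kept as a standing assumption is exactly so that $\Phi(A^{\alpha},A^{\beta})+\Phi(A^{\beta},A^{\alpha})$ is invertible, which is what makes the geometric-mean manipulation $X=X\,\sharp_{\l}\,X$ in the proof of the Proposition legitimate.

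Next I would evaluate the exponents appearing on the right-hand side of the Proposition's conclusion for this choice of parameters:
\[
(1-\lambda)\alpha+\lambda\beta=\tfrac12-\tfrac12=0,\qquad (1-\lambda)\beta+\lambda\alpha=-\tfrac12+\tfrac12=0,
\]
so each entry there equals $A^{0}=I_q$. Hence the right-hand side of the Proposition collapses to $\Phi(I_q,I_q)+\Phi(I_q,I_q)=2\,\Phi(I_q,I_q)$, while the left-hand side is $\Phi(A^{1},A^{-1})+\Phi(A^{-1},A^{1})=\Phi(A,A^{-1})+\Phi(A^{-1},A)$. Using that $\Phi$ is unital, $\Phi(I_q,I_q)=I_p$, and the Proposition then gives precisely
\[
\Phi(A,A^{-1})+\Phi(A^{-1},A)\ \geq\ 2I_p .
\]

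There is essentially no obstacle here beyond this bookkeeping; the only points worth a line of remark are that unitality of $\Phi$ is what converts the right-hand side into $2I_p$ (without it one only obtains the bound $2\,\Phi(I_q,I_q)$), and that, as just noted, strict positivity is what permits invoking the Proposition in the first place. As a sanity check one may specialize $\Phi(X,Y)=X\circ Y$, which recovers Fiedler's inequality $A\circ A^{-1}\geq I$.
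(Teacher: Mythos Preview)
Your proof is correct and follows exactly the paper's own argument: the paper simply states ``With $\alpha=1$ and $\beta=-1$ and $\lambda=\frac{1}{2}$ we get the following result,'' and you have unpacked this substitution in detail. Your remark that unitality of $\Phi$ is what turns $2\,\Phi(I_q,I_q)$ into $2I_p$ is well taken---the Corollary as stated only assumes strict positivity, so either unitality should be added to the hypotheses or the conclusion should read $\geq 2\,\Phi(I_q,I_q)$.
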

 %%%%%%%%%%%%%%%%%%%%%%%%%%%%%%%%%%%%%%%%%%%%%%
The next lemma will be used in the sequel.
\begin{lemma}\cite[Theorem 1.3.3]{Bh}\label{lem1}
 Let A,B be strictly positive matrices. Then the block matrix
$\left[
\begin{array}{cc}
A & X \\
X^* & B
\end{array}\right]$
is positive if and only if $A\geq XB^{-1}X^*$.
\end{lemma}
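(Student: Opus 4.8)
The plan is to reduce the positivity of the $2\times 2$ block matrix to that of its Schur complement with respect to the $(2,2)$-block via a congruence transformation, exploiting that $B$ is invertible. Since $B>0$ is assumed, $B^{-1}$ exists and is Hermitian, so I may form the block lower-triangular matrix
\[
S = \begin{bmatrix} I & 0 \\ -B^{-1}X^* & I \end{bmatrix},
\qquad
S^* = \begin{bmatrix} I & -XB^{-1} \\ 0 & I \end{bmatrix}.
\]
Note $S$ is invertible (its inverse is obtained by replacing $-B^{-1}X^*$ with $B^{-1}X^*$).

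Next I would compute the congruence $S^* \begin{bmatrix} A & X \\ X^* & B\end{bmatrix} S$ by two block multiplications. The first multiplication clears the top-right entry, producing $\begin{bmatrix} A - XB^{-1}X^* & 0 \\ X^* & B \end{bmatrix}$, and the second clears the bottom-left entry, yielding the block-diagonal matrix
\[
S^* \begin{bmatrix} A & X \\ X^* & B\end{bmatrix} S
= \begin{bmatrix} A - XB^{-1}X^* & 0 \\ 0 & B \end{bmatrix}.
\]
This is the routine part; the only thing to be careful about is keeping track of the cross terms $X - XB^{-1}B = 0$ and $X^* - BB^{-1}X^* = 0$.

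Finally I would invoke the elementary fact that, for an invertible $S$, a Hermitian matrix $M$ satisfies $M\geq 0$ if and only if $S^*MS\geq 0$, and that a block-diagonal Hermitian matrix is positive precisely when each diagonal block is positive. Applying this to the displayed identity, $\begin{bmatrix} A & X \\ X^* & B\end{bmatrix}\geq 0$ is equivalent to $A - XB^{-1}X^*\geq 0$ together with $B\geq 0$; since $B>0$ holds by hypothesis, the second condition is automatic, and we are left with $A\geq XB^{-1}X^*$, as claimed. There is no genuine obstacle here: the statement is the classical Schur complement criterion, and the entire content is the congruence computation above; the only hypotheses actually used are the Hermiticity of $A,B$ and the invertibility of $B$, the strict positivity of $A$ never being needed for this form of the criterion.
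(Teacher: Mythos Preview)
Your proof is correct; it is the standard Schur complement argument via congruence by a block unitriangular matrix, and the computation and logic are both sound. Note that the paper itself does not supply a proof of this lemma---it is quoted from \cite[Theorem~1.3.3]{Bh} without argument---so there is no in-paper proof to compare against; your argument is in fact essentially the one given in Bhatia's book.
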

In \cite{Choi}, Choi generalized Kadison's inequality to normal matrices by showing that if $\Phi$ is a  unital positive linear mapping, then
\[
\Phi(A)\Phi(A^*)\leq \Phi(A^*A) \quad \mbox{and} \quad \Phi(A^*)\Phi(A) \leq \Phi(A^*A).
\]
for every normal matrix $A$. A similar result holds true for positive multilinear mappings.

\begin{proposition}\label{CML}
Let $\Phi:\mathcal{M}_q^k\rightarrow\mathcal{M}_p$ be a positive multilinear map. Then
\begin{align*}
& \Phi(A^*_1A_1, A_2^*A_2,\cdots , A^*_kA_k)\geq\Phi(A_1,\cdots, A_k)\Phi(A_1^*,\cdots , A_k^*) \\
& \Phi(A^*_1A_1, A_2^*A_2,\cdots , A^*_kA_k)\geq\Phi(A_1^*,\cdots, A_k^*)\Phi(A_1,\cdots , A_k)
\end{align*}
for all normal matrices $A_1, A_2,\cdots, A_k$.
\end{proposition}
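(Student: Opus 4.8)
The plan is to imitate Choi's block-matrix trick for a single positive linear map, but carried out simultaneously in all $k$ variables. For each $i$, since $A_i$ is normal, write its spectral decomposition $A_i = \sum_{j=1}^{q_i} \lambda_{ij} P_{ij}$ with $\sum_j P_{ij} = I$ and the $P_{ij}$ mutually orthogonal projections; here $\lambda_{ij}\in\mathbb{C}$. The starting point is the observation that for each fixed tuple of scalars $(\lambda_{1j_1},\ldots,\lambda_{kj_k})$ one has the trivial $2\times 2$ block inequality
\[
\begin{bmatrix} |\lambda_{1j_1}|^2\cdots|\lambda_{kj_k}|^2 & \lambda_{1j_1}\cdots\lambda_{kj_k} \\ \overline{\lambda_{1j_1}}\cdots\overline{\lambda_{kj_k}} & 1 \end{bmatrix} \geq 0,
\]
and tensoring this with the positive matrix $\Phi(P_{1j_1},\ldots,P_{kj_k})\geq 0$ keeps it positive. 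Summing over all multi-indices $(j_1,\ldots,j_k)$ and using multilinearity of $\Phi$ together with the fact (from the adjoint-preserving lemma) that $\Phi$ sends Hermitian tuples to Hermitian matrices, one obtains that the block matrix
\[
\begin{bmatrix} \Phi(A_1^*A_1,\ldots,A_k^*A_k) & \Phi(A_1,\ldots,A_k) \\ \Phi(A_1^*,\ldots,A_k^*) & \Phi(I,\ldots,I) \end{bmatrix}
\]
is positive, where I have used $|\lambda_{ij}|^2 = \overline{\lambda_{ij}}\lambda_{ij}$ so that $\sum_j |\lambda_{ij}|^2 P_{ij} = A_i^*A_i$ by normality, and $\sum_j\overline{\lambda_{ij}}P_{ij}=A_i^*$ again by normality (this last point is exactly where normality enters: for a general matrix the spectral-type decomposition would not give $A_i^*$ from the conjugated eigenvalues).

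The next step is to reduce to the unital case. If $\Phi$ is not unital, then $B := \Phi(I,\ldots,I)\geq 0$; one should first dispose of the degenerate directions (where $B$ is not invertible) by a standard limiting or compression argument, or simply assume $B>0$ and recover the general case by continuity after replacing $\Phi$ by $\Phi+\varepsilon\Psi$ for a strictly positive $\Psi$. Assuming $\Phi(I,\ldots,I)=I$, the positivity of the $2\times 2$ block matrix above combined with Lemma~\ref{lem1} (the Schur complement criterion) gives immediately
\[
\Phi(A_1^*A_1,\ldots,A_k^*A_k) \geq \Phi(A_1,\ldots,A_k)\, I^{-1}\, \Phi(A_1,\ldots,A_k)^* = \Phi(A_1,\ldots,A_k)\Phi(A_1^*,\ldots,A_k^*),
\]
where the last equality uses the adjoint-preserving lemma. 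The second asserted inequality follows by the same argument starting instead from the block matrix with $(1,2)$-entry $\Phi(A_1^*,\ldots,A_k^*)$, i.e. by running the scalar $2\times 2$ positivity with $\lambda$ and $\overline\lambda$ swapped, or equivalently by applying the first inequality to the normal matrices $A_i^*$.

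I expect the main obstacle to be the careful handling of the non-unital case and of the multi-index bookkeeping: one must make sure that tensoring the scalar $2\times 2$ matrix with $\Phi(P_{1j_1},\ldots,P_{kj_k})$ and then summing indeed reassembles the block matrix with the correct operator entries, which relies on $\sum_{j_1,\ldots,j_k}\Phi(P_{1j_1},\ldots,P_{kj_k}) = \Phi(I,\ldots,I)$ and on the expansions $\sum_{j_i}\lambda_{ij_i}P_{ij_i}=A_i$, $\sum_{j_i}\overline{\lambda_{ij_i}}P_{ij_i}=A_i^*$, $\sum_{j_i}|\lambda_{ij_i}|^2 P_{ij_i}=A_i^*A_i=A_iA_i^*$ — all of which are valid precisely because each $A_i$ is normal. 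The positivity of $\Phi(P_{1j_1},\ldots,P_{kj_k})$ and hence of the tensored blocks is guaranteed by positivity of $\Phi$, and positivity is preserved under the finite sum, so no further input beyond Lemma~\ref{lem1} and the two preliminary lemmas on $\Phi$ is needed.
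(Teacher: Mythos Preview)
Your proposal is correct and follows essentially the same route as the paper: spectral decomposition of each normal $A_i$, positivity of the rank-one $2\times 2$ scalar matrix $\begin{pmatrix}|\mu|^2 & \mu\\ \overline{\mu} & 1\end{pmatrix}$ for $\mu=\lambda_{1j_1}\cdots\lambda_{kj_k}$ (the paper phrases this as a Hadamard product $Z_{\lambda_{1j_1}}\circ\cdots\circ Z_{\lambda_{kj_k}}$ of such matrices, but this is the same observation), tensoring with $\Phi(P_{1j_1},\ldots,P_{kj_k})$, summing, and then invoking the Schur-complement Lemma~\ref{lem1}. If anything you are slightly more careful than the paper, which writes ``$1$'' in the $(2,2)$ block where strictly speaking $\Phi(I,\ldots,I)$ appears, tacitly assuming the unital case you explicitly flag.
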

\begin{proof}
Since the matrix $Z_{\mu}=\left(\begin{array}{cc}|\mu|^2&\mu\\\bar{\mu}&1\end{array}\right)$ is positive for all $\mu\in\mathbb{C}$, it follows that
\begin{eqnarray*}
\left(\begin{array}{cc}|\lambda_{1j_1}\cdots\lambda_{kj_k}|^2&\lambda_{1j_1}\cdots\lambda_{kj_k}\\\overline{\lambda_{1j_1}\cdots\lambda_{kj_k}}&1\end{array}\right)=Z_{\lambda_{1j_1}}\circ\cdots\circ Z_{\lambda_{kj_k}}
\end{eqnarray*}
is positive. If $A_i=\sum_{j=1}^{q}\lambda_{ij}P_{ij}$  is the spectral decomposition of each $A_i$, then
\begin{eqnarray*}
\Phi(A^*_1A_1, A^*_2A_2,\cdots , A^*_k A_k)&=&\Phi\left(\sum_{j_1=1}^{q}|\lambda_{1j_1}|^2P_{1j_1},\cdots,\sum_{j_k=1}^{q}|\lambda_{kj_k}|^2P_{kj_k}\right)\\
&=&\sum_{j_1=1}^{q}\sum_{j_2=1}^{q}\cdots\sum_{j_k=1}^{q}|\lambda_{1j_1}\lambda_{2j_2}\cdots\lambda_{kj_k}|^2\Phi(P_{1j_1},\cdots , P_{kj_k})
\end{eqnarray*}
and
\begin{eqnarray*}
\Phi(A_1,\cdots , A_k)=\sum_{j_1=1}^{q}\sum_{j_2=2}^{q}\cdots\sum_{j_k=1}^{q}\lambda_{1j_1}\cdots\lambda_{kj_k}\Phi(P_{1j_1},\cdots , P_{kj_k})\\
\Phi(A^*_1,\cdots , A^*_k)=\sum_{j_1=1}^{q}\sum_{j_2=2}^{q}\cdots\sum_{j_k=1}^{q}\overline{\lambda_{1j_1}\cdots\lambda_{kj_k}}\Phi(P_{1j_1},\cdots , P_{kj_k}).
\end{eqnarray*}
It follows that
{\small\begin{align*}
&\left(\begin{array}{cc}\Phi(A^*_1A_1,\cdots , A^*_kA_k)
&\Phi(A_1,\cdots, A_k)\\ \Phi(A^*_1,\cdots , A^*_k)&1\end{array}\right)\\
&\qquad\qquad\hspace{3cm} =\sum_{j_1=1}^{q}\cdots\sum_{j_k=1}^{q}
\left(Z_{\lambda_{1j_1}}\circ\cdots \circ Z_{\lambda_{kj_k}}\right)\otimes\Phi(P_{1j_1}\cdots , P_{kj_k})\geq 0.
\end{align*}}
 Lemma~\ref{lem1} then implies that
\begin{eqnarray*}
\Phi(A^*_1A_1, A^*A_2,\cdots , A^*_kA_k)&\geq&\Phi(A_1, A_2,\cdots, A_k)\Phi(A^*_1, A^*_2,\cdots, A^*_k).
\end{eqnarray*}
\end{proof}
%%%%%%%%%%%%%%%%%%%%%%%%%%%%%%%%%%%%%%%%%%%%%%%%%%%%%%%%%%%%%%%%%%%%%%%%%%%%%%%%%%%%%%%%%%%%%%%%%%%

Ando \cite{An1} unified Kadison and Choi inequalities into a single form as follows:
Let $\Phi$ be a strictly positive linear mapping on $\mathcal{M}_p$. Then
\begin{equation} \label{eq:A1}
\Phi(HA^{-1}H) \geq \Phi(H)\Phi(A)^{-1}\Phi(H)
\end{equation}
whenever $H$ is Hermitian and $A>0$. Moreover, it is known a more general version of \eqref{eq:A1} in \cite[Proposition 2.7.5]{Bh}:
\begin{align}\label{ee}
A\geq X^*A^{-1}X \Longrightarrow \Phi(A) \geq \Phi(X)^*\Phi(A)^{-1}\Phi(X)
\end{align}
for $X$ is arbitrary and $A>0$.\par
As another application of Theorem \ref{thm-CDJ}  we present a  multilinear map version of \eqref{eq:A1} and \eqref{ee}.
\begin{proposition}
Let $\Phi$ be a strictly positive multilinear mapping. Then
\[
\Phi(H_1,\ldots,H_k)\Phi(A_1,\ldots,A_k)^{-1}\Phi(H_1,\ldots,H_k) \leq \Phi(H_1A_1^{-1}H_1,\ldots,H_kA_k^{-1}H_k)
\]
whenever $H_i$ is Hermitian and $A_i>0$ \ $(i=1,\ldots,k)$.
\end{proposition}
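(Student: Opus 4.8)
The plan is to mimic the block-matrix argument used for Proposition~\ref{CML}, but now starting from a $2\times 2$ block positivity that encodes the scalar inequality $x y^{-1} x \leq x y^{-1} x$ (trivially an equality) together with the hypothesis $H_i A_i^{-1} H_i \leq$ something. More precisely, for each fixed pair of eigenvalues, the scalar $2\times 2$ matrix
\[
W_{h,a}=\begin{bmatrix} h a^{-1} h & h \\ h & a \end{bmatrix}
\]
is positive (indeed it equals $\left[\begin{smallmatrix} h \\ a\end{smallmatrix}\right]a^{-1}\left[\begin{smallmatrix} h & a\end{smallmatrix}\right]$, or one checks it is positive semidefinite directly since its determinant is $0$ and its trace is nonnegative). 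Taking the Hadamard product $W_{h_1,a_1}\circ\cdots\circ W_{h_k,a_k}$ gives a positive $2\times 2$ matrix with $(1,1)$ entry $\prod h_i a_i^{-1} h_i=\prod h_i^2/\prod a_i$, off-diagonal entry $\prod h_i$, and $(2,2)$ entry $\prod a_i$.

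First I would write the spectral decompositions $A_i=\sum_{j}\alpha_{ij}P_{ij}$ and $H_i=\sum_j\eta_{ij}Q_{ij}$, with the caveat that $A_i$ and $H_i$ need not commute, so I should be careful: the cleanest route is to expand $\Phi(H_1A_1^{-1}H_1,\ldots)$ by multilinearity in a way that keeps $H_iA_i^{-1}H_i$ as a single positive block. Actually the slicker approach avoids spectral decompositions altogether: I would directly assemble the block matrix
\[
\mathcal{Z}=\begin{bmatrix} \Phi(H_1A_1^{-1}H_1,\ldots,H_kA_k^{-1}H_k) & \Phi(H_1,\ldots,H_k) \\ \Phi(H_1,\ldots,H_k) & \Phi(A_1,\ldots,A_k) \end{bmatrix}
\]
and try to show it is positive; then Lemma~\ref{lem1} (with $X=\Phi(H_1,\ldots,H_k)$, $A=\Phi(H_1A_1^{-1}H_1,\ldots)$, $B=\Phi(A_1,\ldots,A_k)$, both $A$ and $B$ strictly positive since $\Phi$ is strictly positive and $H_iA_i^{-1}H_i>0$, $A_i>0$) yields exactly $\Phi(H_1,\ldots,H_k)\Phi(A_1,\ldots,A_k)^{-1}\Phi(H_1,\ldots,H_k)\leq\Phi(H_1A_1^{-1}H_1,\ldots,H_kA_k^{-1}H_k)$.

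To prove $\mathcal{Z}\geq0$, the key observation is that $\mathcal{Z}$ arises by applying a single positive multilinear mapping to a $k$-tuple of positive block matrices. Specifically, consider the auxiliary mapping $\widetilde\Phi:\mathcal{M}_{2q}^k\to\mathcal{M}_{2p}$ induced by $\Phi$ entrywise on $2\times 2$ block structure — equivalently $\widetilde\Phi=\mathrm{id}_{\mathcal{M}_2}\otimes\Phi$ under the identification $\mathcal{M}_{2q}\cong\mathcal{M}_2\otimes\mathcal{M}_q$ — which is again a positive multilinear mapping (positivity of $\mathrm{id}_{\mathcal{M}_2}\otimes\Phi$ follows because $\Phi$ applied blockwise to a positive block matrix, written as a sum of rank-one positive pieces, stays positive, exactly as in the computation at the end of the proof of Proposition~\ref{CML}). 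Feeding in the positive block matrices
\[
\begin{bmatrix} H_iA_i^{-1}H_i & H_i \\ H_i & A_i \end{bmatrix}\geq0 \qquad (i=1,\ldots,k),
\]
each positive by Lemma~\ref{lem1} since $H_iA_i^{-1}H_i\geq H_iA_i^{-1}H_i$ trivially, produces precisely $\mathcal{Z}$ in the off-diagonal and $(2,2)$ slots, but in the $(1,1)$ slot it produces $\Phi(H_1A_1^{-1}H_1,\ldots,H_kA_k^{-1}H_k)$ — so in fact $\mathcal{Z}=\widetilde\Phi$ of these block matrices and is automatically positive. The main obstacle I anticipate is verifying carefully that $\mathrm{id}_{\mathcal{M}_2}\otimes\Phi$ (i.e., applying $\Phi$ to each of the four blocks of a block $2\times 2$ matrix) preserves positivity for a multilinear $\Phi$; for $k=1$ this is the standard $2$-positivity fact, and for general $k$ one reduces, as in Proposition~\ref{CML}, to writing each positive block-matrix argument as $\sum_j (v_j v_j^*)\otimes R_{ij}$ with $v_j\in\mathbb{C}^2$ and $R_{ij}\geq0$ and invoking multilinearity together with the elementary fact that the Hadamard/entrywise product of the rank-one positive $2\times 2$ matrices $v_j v_j^*$ is positive. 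Once that lemma is in hand, the proof is three lines.
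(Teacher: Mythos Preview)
Your block-matrix strategy is natural, but the linchpin claim --- that the blockwise extension $\widetilde\Phi$ is a positive multilinear map --- is false. For $k=1$ this is exactly the assertion that every positive linear map is $2$-positive, and the transpose $\Phi(X)=X^T$ is the standard counterexample: $\mathrm{id}_{\mathcal{M}_2}\otimes\Phi$ is the partial transpose, which does not preserve positivity. Your proposed justification, writing an arbitrary positive block matrix as $\sum_j(v_jv_j^*)\otimes R_{ij}$ with $R_{ij}\geq 0$, amounts to asserting separability of every positive element of $\mathcal{M}_2\otimes\mathcal{M}_q$, which already fails for $q=2$. (In Proposition~\ref{CML} this issue never arose because the spectral decomposition of a single normal $A_i$ produced the separable form directly.)

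The route can be salvaged, because the \emph{specific} inputs $M_i=\left[\begin{smallmatrix}H_iA_i^{-1}H_i & H_i\\ H_i & A_i\end{smallmatrix}\right]$ happen to be separable: taking the spectral decomposition $A_i^{-1/2}H_iA_i^{-1/2}=\sum_\ell c_{i\ell}\,e_{i\ell}e_{i\ell}^*$ one checks that $M_i=\sum_\ell\left[\begin{smallmatrix}c_{i\ell}^2 & c_{i\ell}\\ c_{i\ell} & 1\end{smallmatrix}\right]\otimes A_i^{1/2}e_{i\ell}e_{i\ell}^*A_i^{1/2}$, after which the Proposition~\ref{CML}-style expansion of $\mathcal Z$ does go through. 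But the crucial move --- passing to $Y_i:=A_i^{-1/2}H_iA_i^{-1/2}$ --- is precisely the paper's idea, and you have not identified it.

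The paper bypasses block matrices altogether. It introduces the auxiliary unital positive multilinear map
\[
\Psi(Y_1,\ldots,Y_k)=\Phi(A_1,\ldots,A_k)^{-1/2}\,\Phi\bigl(A_1^{1/2}Y_1A_1^{1/2},\ldots,A_k^{1/2}Y_kA_k^{1/2}\bigr)\,\Phi(A_1,\ldots,A_k)^{-1/2},
\]
applies the multilinear Kadison inequality $\Psi(Y_1,\ldots,Y_k)^2\leq\Psi(Y_1^2,\ldots,Y_k^2)$ for Hermitian $Y_i$ (a consequence of Proposition~\ref{CML}, or the $r=2$ case of Corollary~\ref{co5}) with the choice $Y_i=A_i^{-1/2}H_iA_i^{-1/2}$, and then conjugates back by $\Phi(A_1,\ldots,A_k)^{1/2}$. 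No $2$-positivity is invoked.
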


\begin{proof}
Assume that the positive multilinear mapping $\Psi$  is defined by
\begin{equation} \label{eq:psi}
\Psi(Y_1,\ldots,Y_k) = \Phi(A_1,\ldots,A_k)^{-\frac{1}{2}}\Phi(A_1^{\frac{1}{2}}Y_1A_1^{\frac{1}{2}},\ldots,A_k^{\frac{1}{2}}Y_kA_k^{\frac{1}{2}})\Phi(A_1,\ldots,A_k)^{-\frac{1}{2}}.
\end{equation}
 By Corollary~\ref{co5} we have
\[
\Psi(Y_1^2,\ldots,Y_k^2)\geq \Psi(Y_1,\ldots,Y_k)^2
\]
for every Hermitian $Y_i$. Choose $Y_i=A_i^{-\frac{1}{2}}H_iA_i^{-\frac{1}{2}}$ to get
\[
\Psi(A_1^{-\frac{1}{2}}H_1A_1^{-1}H_1A_1^{-\frac{1}{2}},\ldots, A_k^{-\frac{1}{2}}H_kA_k^{-1}H_kA_k^{-\frac{1}{2}}) \geq \Psi(A_1^{-\frac{1}{2}}H_1A_1^{-\frac{1}{2}},\ldots,A_k^{-\frac{1}{2}}H_kA_k^{-\frac{1}{2}})^2.
\]
Multiplying both sides by $\Phi(A_1,\ldots,A_k)^{\frac{1}{2}}$ we conclude the desired inequality.
\end{proof}

\begin{proposition}
Let $\Phi$ be a strictly positive multilinear mapping. If $A_i>0$ and $X_i\in \mathcal{M}_q$, then
\begin{align*}
A_i\geq & X_i^*A_i^{-1}X_i \    (i=1,\ldots,k) \Longrightarrow \\
& \Phi(A_1.\ldots,A_k) \geq \Phi(X_1,\ldots,X_k)^* \Phi(A_1,\ldots,A_k)^{-1} \Phi(X_1,\ldots,X_k).
\end{align*}
\end{proposition}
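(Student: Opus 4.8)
The plan is to mimic the proof of the immediately preceding proposition: reduce the multilinear statement to the known linear result \eqref{ee} applied to a suitably constructed auxiliary positive linear map. Concretely, fix $A_1,\ldots,A_k>0$ and consider the unital strictly positive multilinear map $\Psi$ defined exactly as in \eqref{eq:psi}, namely
\[
\Psi(Y_1,\ldots,Y_k) = \Phi(A_1,\ldots,A_k)^{-\frac{1}{2}}\,\Phi\bigl(A_1^{\frac{1}{2}}Y_1A_1^{\frac{1}{2}},\ldots,A_k^{\frac{1}{2}}Y_kA_k^{\frac{1}{2}}\bigr)\,\Phi(A_1,\ldots,A_k)^{-\frac{1}{2}}.
\]
Then set $W_i := A_i^{-\frac{1}{2}}X_i A_i^{-\frac{1}{2}}$, so that $W_i^*A_i^{-1}W_i = A_i^{-\frac{1}{2}}X_i^*A_i^{-1}X_iA_i^{-\frac{1}{2}}$. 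The hypothesis $A_i \geq X_i^*A_i^{-1}X_i$ is equivalent, after conjugating by $A_i^{-1/2}$, to $I \geq W_i^*A_i^{-1}W_i$; but this does not by itself feed into a single linear map.

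Instead I would pass to the linear map obtained by freezing all but one slot: define $\Psi_1:\mathcal{M}_q\to\mathcal{M}_p$ by $\Psi_1(Y) = \Psi(Y,I,\ldots,I)$, which by the discussion after Example~\ref{ex2} is strictly positive and linear (and unital). One checks directly from the definition that $\Psi_1(Y) = \Phi(A_1,\ldots,A_k)^{-1/2}\Phi(A_1^{1/2}YA_1^{1/2},A_2,\ldots,A_k)\Phi(A_1,\ldots,A_k)^{-1/2}$, so $\Psi_1$ is unital. Now I realize the cleanest route is the block-matrix argument of Proposition~\ref{CML} combined with Lemma~\ref{lem1}. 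The hypotheses $A_i \geq X_i^*A_i^{-1}X_i$ are each equivalent by Lemma~\ref{lem1} to the positivity of the block matrix $\begin{bmatrix} A_i & X_i \\ X_i^* & A_i \end{bmatrix}$. Taking tensor products of these $k$ positive $2\times 2$ block matrices and regrouping entries — exactly as $Z_{\lambda_{1j_1}}\circ\cdots\circ Z_{\lambda_{kj_k}}$ was regrouped in the proof of Proposition~\ref{CML} — one obtains a $2\times 2$ block matrix whose entries are tensor products $A_1\otimes\cdots\otimes A_k$, $X_1\otimes\cdots\otimes X_k$, $X_1^*\otimes\cdots\otimes X_k^*$, $A_1\otimes\cdots\otimes A_k$, and this block matrix is positive because it is a principal submatrix of the tensor product of positive matrices.

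Applying the (linear) map $\Phi$ slot-wise is not directly meaningful on block matrices, so the final step is: apply the positive linear map $\mathrm{id}_2 \otimes \Phi$ (acting on $\mathcal{M}_2(\mathcal{M}_q^{\otimes k})$, sending the tensor entries through $\Phi$) to this positive block matrix. Since $\Phi$ sends the $2\times2$ block matrix with entries $A_1\otimes\cdots\otimes A_k$, $X_1\otimes\cdots$, etc., to the block matrix with entries $\Phi(A_1,\ldots,A_k)$, $\Phi(X_1,\ldots,X_k)$, $\Phi(X_1^*,\ldots,X_k^*)=\Phi(X_1,\ldots,X_k)^*$, $\Phi(A_1,\ldots,A_k)$ (using that $\Phi$ is adjoint-preserving by the first Lemma), positivity is preserved, and Lemma~\ref{lem1} applied in the reverse direction yields
\[
\Phi(A_1,\ldots,A_k) \geq \Phi(X_1,\ldots,X_k)^*\,\Phi(A_1,\ldots,A_k)^{-1}\,\Phi(X_1,\ldots,X_k).
\]

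The main obstacle, and the step that needs care, is making the tensor-product regrouping rigorous: starting from $\bigotimes_{i=1}^k \begin{bmatrix} A_i & X_i \\ X_i^* & A_i\end{bmatrix}\ge 0$, one must identify the appropriate $2\times 2$-block sub-pattern whose four blocks are precisely $\bigotimes A_i$, $\bigotimes X_i$, $\bigotimes X_i^*$, $\bigotimes A_i$; this is a permutation-of-tensor-factors bookkeeping exactly analogous to (and no harder than) the Hadamard-product identity $Z_{\lambda_{1j_1}}\circ\cdots\circ Z_{\lambda_{kj_k}}$ used in Proposition~\ref{CML}, but it must be spelled out. Once that positive block matrix is in hand, everything else is a formal application of Lemma~\ref{lem1}, the adjoint-preserving property, and positivity of $\Phi$ on the ampliation $\mathrm{id}_2\otimes\Phi$. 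Alternatively, one can bypass the block-matrix step entirely by invoking the previous proposition with $H_i$ replaced by $X_i$ after a polar-type reduction, but the normality/Hermiticity required there makes the block-matrix route the more natural one.
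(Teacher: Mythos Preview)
Your block-matrix route has a genuine gap at the step where you ``apply the positive linear map $\mathrm{id}_2\otimes\Phi$.'' A positive multilinear map $\Phi:\mathcal{M}_q^k\to\mathcal{M}_p$ does linearize to a map $\tilde\Phi:\mathcal{M}_q^{\otimes k}\to\mathcal{M}_p$ via the universal property, but $\tilde\Phi$ need not be positive, let alone $2$-positive. Concretely, take $k=q=2$ and $\Phi(A,B)=A\circ B^T$ (unital, strictly positive, bilinear); on the rank-one positive $T=vv^*$ with $v=e_1\otimes e_2-e_2\otimes e_1$ one has $T=E_{11}\otimes E_{22}-E_{12}\otimes E_{21}-E_{21}\otimes E_{12}+E_{22}\otimes E_{11}$ and hence $\tilde\Phi(T)=-E_{12}-E_{21}\not\geq 0$. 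So ``positivity is preserved'' is unjustified, and the argument breaks at precisely the point you flag as needing care --- except the real difficulty is not the tensor bookkeeping but the ampliation of $\Phi$ itself. The analogy with Proposition~\ref{CML} is misleading: there the block entries are built from \emph{spectral projections}, so every term is already a positive combination of $\Phi$ evaluated on tuples of positives; your $X_i$ have no such structure.

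Your abandoned first approach is, up to a computational slip, exactly the paper's proof. Conjugating $A_i\ge X_i^*A_i^{-1}X_i$ by $A_i^{-1/2}$ gives $I\ge A_i^{-1/2}X_i^*A_i^{-1}X_iA_i^{-1/2}=W_i^*W_i$ (not $W_i^*A_i^{-1}W_i$), i.e.\ each $W_i=A_i^{-1/2}X_iA_i^{-1/2}$ is a contraction. The paper then uses the implication $Y_i^*Y_i\le I\ (i=1,\ldots,k)\Rightarrow \Psi(Y_1,\ldots,Y_k)^*\Psi(Y_1,\ldots,Y_k)\le I$ for the unital $\Psi$ of \eqref{eq:psi} --- this follows immediately from Theorem~\ref{thm-RD}, since $\|\Psi\|=1$ --- sets $Y_i=W_i$, and multiplies through by $\Phi(A_1,\ldots,A_k)^{1/2}$. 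No block matrices, no linearization of $\Phi$.
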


\begin{proof}
Let $\Psi$ be the positive multilinear mapping defined by \eqref{eq:psi}. By Theorem~\ref{CML}
\[
Y_i^*Y_i \leq I \   (i=1,\ldots,k)\  \Longrightarrow \  \Psi(Y_1,\ldots,Y_k)^*\Psi(Y_1,\ldots,Y_k) \leq I.
\]
Let $A_i\geq X_i^*A_i^{-1}X_i$ and put $Y_i=A_i^{-\frac{1}{2}}X_iA_i^{-\frac{1}{2}}$ \ $(i=1,\ldots,k)$. Then $Y_i^*Y_i = A_i^{-\frac{1}{2}}X_i^*A_i^{-1}X_iA_i^{-\frac{1}{2}} \leq I$. Hence
\[
\Psi(A_1^{-\frac{1}{2}}X_1^*A_1^{-\frac{1}{2}},\ldots,A_k^{-\frac{1}{2}}X_k^*A_k^{-\frac{1}{2}})\Psi(A_1^{-\frac{1}{2}}X_1A_1^{-\frac{1}{2}},\ldots,A_k^{-\frac{1}{2}}X_kA_k^{-\frac{1}{2}})\leq I
\]
Multiply both sides by $\Phi(A_1,\ldots,A_k)^{\frac{1}{2}}$ to obtain the result.
\end{proof}
%%%%%%%%%%%%%%%%%%%%%%%%%%%%%%%%%%%%%%%%%%%%%%%%%%%%%%%%%%%%%%%

\par
\medskip

\section{Kantorovich inequality and convex theorems}

Let $w_i$ be positive scalars. If $m\leq a_i\leq M$ for some positive scalars $m$ and $M$, then the Kantorovich inequality
 \begin{align}\label{kant}
 \left(\sum_{i=1}^{n}w_ia_i\right)\left(\sum_{i=1}^{n}w_i a_i^{-1}\right)\leq \frac{(m+M)^2}{4mM}\left(\sum_{i=1}^{n}w_i\right)^2
\end{align}
holds. There are several operator version of this inequality, see e.g. \cite{MA}.  For example if $A$ is strictly positive matrix with $0<m\leq A\leq M$ for some  positive scalars $m<M$, then
\begin{align}\label{ka1}
 \langle A^{-1}x,x\rangle \leq \frac{(m+M)^2}{4mM}\langle Ax,x\rangle^{-1}\qquad (x\in {\Bbb C}^p,\ \|x\|=1).
\end{align}
We would like to refer the reader to \cite{M} to find a recent  survey concerning   operator Kantorovich inequality.

 We show a matrix version of the Kantorovich inequality \eqref{kant} including positive multilinear mappings: If $t_1,\cdots,t_n$ are positive scalars and  $0<m\leq A_i\leq M$ \ $(i=1,\ldots,k)$,  then
\begin{align}\label{km}
  \Phi\left(\sum_{i=1}^{n}t_iA_i,\sum_{i=1}^{n}t_iA_i^{-1}
\right)\leq\frac{m^2+M^2}{2mM}\Phi\left(\sum_{i=1}^{n}t_i,\sum_{i=1}^{n}t_i \right).
\end{align}

Another  particular case is $\Phi(A,B)=A\circ B$ and $n=1$ which gives $A\circ A^{-1}\leq\frac{m^2+M^2}{2mM}$, see \cite{MPS}. First we give a more general form of \eqref{km}.

\begin{proposition}\label{pr2}
Let $\Phi:\mathcal{M}_q^2\to \mathcal{M}_p$  be a unital positive multilinear map.
If $A_i,B_i\in\mathcal{M}_q$  are positive  matrices with $m\leq A_i,B_i\leq M$ \ $(i=1,\cdots,n)$, then
{\small\begin{align}\label{qq2}
\Phi\left(\sum_{i=1}^{n}X_i^{\frac{1}{2}}A_iX_i^{\frac{1}{2}},\sum_{i=1}^{n}Y_i^{\frac{1}{2}}B_i^{-1}
Y_i^{\frac{1}{2}}\right)
&+\Phi\left(\sum_{i=1}^{n}X_i^{\frac{1}{2}}A_i^{-1}X_i^{\frac{1}{2}},\sum_{i=1}^{n}Y_i^{\frac{1}{2}}B_i
Y_i^{\frac{1}{2}}\right)\nonumber\\
&\qquad
\leq\frac{m^2+M^2}{mM}\ \Phi\left(\sum_{i=1}^{n}X_i,\sum_{i=1}^{n}Y_i\right)
\end{align}}
for all positive  matrices $X_i,Y_i$.
\end{proposition}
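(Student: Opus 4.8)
The plan is to feed the spectral decompositions of the $A_i$ and $B_i$ into the two sums appearing in \eqref{qq2} and then let multilinearity and positivity of $\Phi$ do the work, so that the whole matrix statement collapses onto an elementary scalar inequality. Write $A_i=\sum_{j}\lambda_{ij}P_{ij}$ and $B_i=\sum_{l}\mu_{il}Q_{il}$ for the spectral decompositions, where $P_{ij}$ and $Q_{il}$ are orthogonal projections with $\sum_j P_{ij}=\sum_l Q_{il}=I$ and, by hypothesis, all $\lambda_{ij},\mu_{il}\in[m,M]$. Put $R_{ij}:=X_i^{1/2}P_{ij}X_i^{1/2}$ and $S_{il}:=Y_i^{1/2}Q_{il}Y_i^{1/2}$; these are positive, being congruences of projections. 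The key observation is that the two matrices built from $A_i$ on the left of \eqref{qq2} are positive combinations of the \emph{same} atoms $R_{ij}$:
\[
X_i^{1/2}A_iX_i^{1/2}=\sum_j\lambda_{ij}R_{ij},\qquad X_i^{1/2}A_i^{-1}X_i^{1/2}=\sum_j\lambda_{ij}^{-1}R_{ij},\qquad \sum_j R_{ij}=X_i,
\]
and likewise $Y_i^{1/2}B_i^{-1}Y_i^{1/2}=\sum_l\mu_{il}^{-1}S_{il}$, $\ Y_i^{1/2}B_iY_i^{1/2}=\sum_l\mu_{il}S_{il}$, and $\sum_l S_{il}=Y_i$.

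Substituting these into \eqref{qq2} and expanding by multilinearity of $\Phi$ over the multi-indices $(i,j)$ and $(i',l)$, the left-hand side of \eqref{qq2} becomes
\[
\sum_{i,j}\sum_{i',l}\big(\lambda_{ij}\,\mu_{i'l}^{-1}+\lambda_{ij}^{-1}\,\mu_{i'l}\big)\,\Phi(R_{ij},S_{i'l}).
\]
Since $R_{ij}\ge 0$ and $S_{i'l}\ge 0$, positivity of $\Phi$ gives $\Phi(R_{ij},S_{i'l})\ge 0$. The scalar ingredient is that, for $\lambda,\mu\in[m,M]$,
\[
\frac{\lambda}{\mu}+\frac{\mu}{\lambda}\ \le\ \frac{m^2+M^2}{mM},
\]
which follows at once from $\dfrac{m^2+M^2}{mM}-\dfrac{\lambda}{\mu}-\dfrac{\mu}{\lambda}=\dfrac{(M\lambda-m\mu)(M\mu-m\lambda)}{mM\lambda\mu}\ge 0$. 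Hence each coefficient obeys $\lambda_{ij}\mu_{i'l}^{-1}+\lambda_{ij}^{-1}\mu_{i'l}\le\frac{m^2+M^2}{mM}$, so, term by term, $\big(\lambda_{ij}\mu_{i'l}^{-1}+\lambda_{ij}^{-1}\mu_{i'l}\big)\Phi(R_{ij},S_{i'l})\le\frac{m^2+M^2}{mM}\Phi(R_{ij},S_{i'l})$.

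Finally I would sum these positive-semidefinite inequalities and run multilinearity of $\Phi$ backwards:
\begin{align*}
\sum_{i,j}\sum_{i',l}\big(\lambda_{ij}\mu_{i'l}^{-1}+\lambda_{ij}^{-1}\mu_{i'l}\big)\Phi(R_{ij},S_{i'l})
&\le \frac{m^2+M^2}{mM}\,\Phi\Big(\sum_{i,j}R_{ij},\ \sum_{i',l}S_{i'l}\Big)\\
&= \frac{m^2+M^2}{mM}\,\Phi\Big(\sum_i X_i,\ \sum_{i'}Y_{i'}\Big),
\end{align*}
which is exactly \eqref{qq2}. I do not anticipate a real obstacle here: the only idea needed is the recognition that, after spectral decomposition, the $A_i$ term and the $A_i^{-1}$ term of \eqref{qq2} are positive combinations of one and the same family of positive atoms $R_{ij}$ (and similarly $S_{il}$ for $B_i$), which linearizes the whole problem; what remains is the bookkeeping of decomposing both slots of $\Phi$ with the right index ranges, together with the one-line scalar estimate above. (Note that unitality of $\Phi$ plays no role in this argument.)
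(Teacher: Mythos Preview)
Your proof is correct and follows essentially the same route as the paper: spectral decompositions of the $A_i$ and $B_i$, multilinearity of $\Phi$ to expand into a double sum over positive atoms, the scalar bound $\lambda/\mu+\mu/\lambda\le (m^2+M^2)/(mM)$ for $\lambda,\mu\in[m,M]$, and positivity of $\Phi$ to compare term by term. Your write-up is in fact a bit more explicit than the paper's (you supply the factorization proving the scalar inequality and carry out the full double-index summation, whereas the paper establishes the single-pair case and then appeals to multilinearity), and your observation that unitality of $\Phi$ is never used is correct.
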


\begin{proof}
  Let $A_i$ and $B_j$ are positive matrices with eigenvalues $\lambda_{i1},\cdots,\lambda_{iq}$ and $\mu_{j1},\cdots,\mu_{jq}$, respectively, so that $m\leq\lambda_{ik}\leq M$ and $m\leq\mu_{jk}\leq M$ for every $k=1,\cdots, q$. Hence
  \begin{align}\label{qq1}
\lambda_{ik}\mu_{jk}^{-1}+\lambda_{ik}^{-1}\mu_{jk}\leq \frac{m^2+M^2}{mM}.
  \end{align}
If $A_i=\sum_{k=1}^{q}\lambda_{ik}P_{ik}$ and $B_j=\sum_{\ell=1}^{q}\mu_{j\ell}P_{j\ell}$, then
   \begin{align*}
    &\Phi\left(X_i^{\frac{1}{2}}A_iX_i^{\frac{1}{2}},Y_j^{\frac{1}{2}}B_j^{-1}Y_j^{\frac{1}{2}}\right)+
    \Phi\left(X_i^{\frac{1}{2}}A_i^{-1}X_i^{\frac{1}{2}},Y_j^{\frac{1}{2}}B_jY_j^{\frac{1}{2}}\right)\\
     &=
     \Phi\left(\sum_{k=1}^{q}\lambda_{ik}X_i^{\frac{1}{2}}P_{ik}X_i^{\frac{1}{2}},
     \sum_{\ell=1}^{q}\mu_{j\ell}^{-1}Y_j^{\frac{1}{2}}Q_{j\ell}Y_j^{\frac{1}{2}} \right)
     + \Phi\left(\sum_{k=1}^{q}\lambda_{ik}^{-1}X_i^{\frac{1}{2}}P_{ik}X_i^{\frac{1}{2}},
     \sum_{\ell=1}^{q}\mu_{j\ell}Y_j^{\frac{1}{2}}Q_{j\ell}Y_j^{\frac{1}{2}} \right)
          \\
     &=\sum_{k=1}^{q}\sum_{\ell=1}^{q}(\lambda_{ik}\mu_{j\ell}^{-1}+\lambda_{ik}^{-1}\mu_{j\ell})
     \Phi\left(X_i^{\frac{1}{2}}P_{ik}X_i^{\frac{1}{2}},Y_j^{\frac{1}{2}}Q_{j\ell}Y_j^{\frac{1}{2}}\right)\\
     &\leq \frac{m^2+M^2}{mM}\sum_{k=1}^{q}\sum_{\ell=1}^{q}
     \Phi\left(X_i^{\frac{1}{2}}P_{ik}X_i^{\frac{1}{2}},Y_j^{\frac{1}{2}}Q_{j\ell}Y_j^{\frac{1}{2}}\right)\\
     &=\frac{m^2+M^2}{mM}\Phi(X_i,Y_j),
        \end{align*}
        where the inequality follows from \eqref{qq1}. The multilinearity of $\Phi$ then can be applied to  show \eqref{qq2}.
\end{proof}
%%%%%%%%%%%%%%%%%%%%%%%%%%%%%%%%%%%%%%%%%%%%%%%%%%%%%%%%%%%%%%%%%%%%%%%%%
With the assumption as in Proposition \ref{pr2}, if we put $B=A$ and $Y=X$, then we get
\begin{align*}
\Phi\left(\sum_{i=1}^{n}X_i^{\frac{1}{2}}A_iX_i^{\frac{1}{2}},\sum_{i=1}^{n}X_i^{\frac{1}{2}}A_i^{-1}
X_i^{\frac{1}{2}}\right)\leq\frac{m^2+M^2}{2mM}\ \Phi\left(\sum_{i=1}^{n}X_i,\sum_{i=1}^{n}X_i\right).
\end{align*}
If $t_1,\cdots,t_n$ are positive scalars, then with $X_i=t_i$ we get \eqref{km}.

%%%%%%%%%%%%%%%%%%%%%%%%%%%%%%%%%%%%%%%%%%%%%%%%%%%%%%%%%%%%%%%%%%%%%%%
Let $\Phi:\mathcal{M}_q^2\to \mathcal{M}_p$   be defined as Example \ref{ex2}. Assume that $n=1$ and $X=Y=I$. If $0<m\leq A,B\leq M$, then as another  consequence of Proposition \ref{pr2} we get another operator Kantorovich inequality as follows:
$$\langle x,B^{-1}x\rangle\langle Ax,x\rangle+\langle x,Bx\rangle\langle A^{-1}x,x\rangle\leq\frac{m^2+M^2}{mM}.$$

Next we prove a convex theorem involving the positive multilinear mappings.

\begin{theorem}\label{th1}
If $\Phi:\mathcal{M}_q^2\to \mathcal{M}_p$ is a positive multilinear mapping, then the function
$$f(t)=\Phi\left(X^*A^{1+t}X,Y^*B^{1-t}Y\right)
+\Phi\left(X^*A^{1-t}X,Y^*B^{1+t}Y\right)$$
is convex on ${\Bbb R}$ and attains its minimum at $t = 0$
for all strictly positive  matrices $A,B\in\mathcal{M}_q$ and all $X,Y \in \mathcal{M}_q$.
\end{theorem}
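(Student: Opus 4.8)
The plan is to reduce this matrix statement to a family of elementary scalar inequalities by diagonalizing $A$ and $B$. First I would record the symmetry $f(-t)=f(t)$, which is immediate from the definition (it merely swaps the two summands), so $f$ is an even function on $\mathbb{R}$; hence once convexity is proved, $f(0)=f\big(\tfrac{t+(-t)}{2}\big)\le\tfrac12\big(f(t)+f(-t)\big)=f(t)$ will give the minimum at $t=0$ for free. Thus the whole problem is to prove that $f$ is convex.

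Next, write the spectral decompositions $A=\sum_{i}\lambda_iP_i$ and $B=\sum_{j}\mu_jQ_j$ with $\lambda_i,\mu_j>0$ (strict positivity of $A,B$ guarantees positive eigenvalues, so the real powers $A^{1\pm t}$ and the logarithms appearing below are well defined). Then $A^{1\pm t}=\sum_i\lambda_i^{\,1\pm t}P_i$ and $B^{1\pm t}=\sum_j\mu_j^{\,1\pm t}Q_j$, and the multilinearity of $\Phi$ gives
\[
f(t)=\sum_{i,j}\Big(\lambda_i^{\,1+t}\mu_j^{\,1-t}+\lambda_i^{\,1-t}\mu_j^{\,1+t}\Big)\,\Phi\big(X^*P_iX,\;Y^*Q_jY\big).
\]
Since $X^*P_iX\ge0$ and $Y^*Q_jY\ge0$, positivity of $\Phi$ makes each matrix coefficient $\Phi(X^*P_iX,Y^*Q_jY)$ positive, and by the adjoint-preserving lemma $f(t)$ is Hermitian for every $t$.

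It remains to handle the scalar factors. Writing $\lambda_i^{\,1+t}\mu_j^{\,1-t}=\lambda_i\mu_j\,(\lambda_i/\mu_j)^{t}=\lambda_i\mu_j\,e^{t\log(\lambda_i/\mu_j)}$, each scalar coefficient equals $2\lambda_i\mu_j\cosh\!\big(t\log(\lambda_i/\mu_j)\big)$, which as a function of $t\in\mathbb{R}$ is a nonnegative multiple of $\cosh$ composed with a linear map, hence convex. Multiplying the convexity inequality of each scalar function by the corresponding positive semidefinite matrix $\Phi(X^*P_iX,Y^*Q_jY)$ (a scalar inequality $a\le b$ becomes $aC\le bC$ for $C\ge0$) and summing over $i,j$ shows that $f$ is convex on $\mathbb{R}$; together with the evenness noted above this finishes the proof. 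I do not foresee a genuine obstacle: the only points requiring care are the bookkeeping in the multilinear expansion, observing that the two terms combine precisely into a hyperbolic cosine, and the routine remark that $A,B>0$ legitimizes the real powers and logarithms.
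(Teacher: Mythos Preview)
Your proposal is correct and follows essentially the same route as the paper: both arguments expand $f$ via the spectral decompositions of $A$ and $B$ together with multilinearity of $\Phi$, reduce the question to the convexity of the scalar coefficients $\lambda_i^{1+t}\mu_j^{1-t}+\lambda_i^{1-t}\mu_j^{1+t}$, and then invoke evenness to locate the minimum at $t=0$. The only cosmetic difference is that the paper phrases the scalar step as the convexity of $g(s)=x^s+x^{-s}$ and derives the midpoint inequality $2f(s)\le f(s+t)+f(s-t)$, whereas you rewrite the same coefficient as $2\lambda_i\mu_j\cosh\bigl(t\log(\lambda_i/\mu_j)\bigr)$ and appeal directly to the convexity of $\cosh$; these are the same observation.
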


\begin{proof}
 Assume that $A,B$ are positive  matrices in $\mathcal{M}_q$ with eigenvalues $\lambda_1,\cdots,\lambda_q$ and $\mu_1,\cdots,\mu_q$, respectively. Since $g(s)=x^s+x^{-s}$ ($x>0$) is convex on ${\Bbb R}$, for every $i,j=1,\cdots, q$ and $x=\lambda_i\mu_j^{-1}$ we have
  \begin{equation} \label{q3}
2(\l_i^s\mu_j^{-s}+\l_i^{-s}\mu_j^s ) \leq \l_i^{s+t}\mu_j^{-(s+t)}+\l_i^{-(s+t)}\mu_j^{s+t}+\l_i^{s-t}\mu_j^{-(s-t)}+\l_i^{-(s-t)}\mu_j^{s-t}.
\end{equation}
    Multiply both sides of \eqref{q3} by $\lambda_i\mu_j$ to obtain
  \begin{align} \label{q4}
  2(\l_i^{1+s}\mu_j^{1-s}+\l_i^{1-s}\mu_j^{1+s} ) & \leq \l_i^{1+s+t}\mu_j^{1-(s+t)}+\l_i^{1-(s+t)}\mu_j^{1+s+t} \notag \\
&\ \ \  +\l_i^{1+s-t}\mu_j^{1-(s-t)}+\l_i^{1-(s-t)}\mu_j^{1+s-t}.
\end{align}
Now if $A=\sum_{i=1}^{q}\lambda_iP_i$ and $B=\sum_{j=1}^{q}\mu_jQ_j$ is the spectral decomposition of $A,B$, respectively, then
    {\small\begin{align*}
    2f(s)&=2\left[ \Phi\left(X^*A^{1+s}X,Y^*B^{1-s}Y\right)+
    \Phi\left(X^*A^{1-s}X,Y^*B^{1+s}Y\right) \right]\\
    &=\sum_{i=1}^{q}\sum_{j=1}^{q} 2\left(\lambda_i^{s+1}\mu_j^{-s+1}+\lambda_i^{-s+1}\mu_j^{s+1}\right)
    \Phi\left(X^*P_iX,Y^*Q_jY\right) \\
    &\qquad \qquad\qquad\hspace{6cm}\quad (\mbox{by multilinearity of $\Phi$})\\
    &\leq \sum_{i=1}^{q}\sum_{j=1}^{q} \left(\l_i^{1+s+t}\mu_j^{1-(s+t)}+\l_i^{1-(s+t)}\mu_j^{1+s+t} +\l_i^{1+s-t}\mu_j^{1-(s-t)}+\l_i^{1-(s-t)}\mu_j^{1+s-t} \right) \\
& \quad \quad
    \Phi\left(X^*P_iX,Y^*Q_jY\right) \\
    & \qquad \qquad\qquad\hspace{5cm}\quad(\mbox{by \eqref{q4} and the positivity of $\Phi$})\\
    &=\Phi\left(X^*A^{1+s+t}X,Y^*B^{1-(s+t)}Y\right)    +\Phi\left(X^*A^{1-(s+t)}X,Y^*B^{1+(s+t)}Y\right) + \\
& \qquad \qquad \Phi\left(X^*A^{1+s-t}X,Y^*B^{1-(s-t)}Y\right)
    +\Phi\left(X^*A^{1-(s-t)}X,Y^*B^{1+(s-t)}Y\right) \\
& = f(s+t)+f(s-t).
    \end{align*}}
    Therefore $f$ is convex on ${\Bbb R}$. Since  $f(-t)=f(t)$,  this together with the convexity of $f$ implies that $f$ attains its minimum at $t=0$.
\end{proof}

%%%%%%%%%%%%%%%%%%%%%%%%%%%%%%%%%%%%%%%%%%%%%%%%%%%%%%%%%%%%%%%%%%%%%%%%%%%%%%%%%%%%%%%%%%%%%%

%%%%%%%%%%%%%%%%%%%%%%%%%%%%%%%%%%%%%%%%%%%%%%%%%%%%%%%%%%%%%%%5
Two special cases of the last theorem reads as follows.
\begin{corollary}\label{co}
Let $\Phi:\mathcal{M}_q^2\to \mathcal{M}_p$ be a    positive multilinear map. For all $A,B\geq 0$, the function
$$f(t)=\Phi\left(A^{1+t},B^{1-t}\right)+\Phi\left(A^{1-t},B^{1+t}\right)$$
is decreasing on the interval $[-1, 0]$, increasing on the interval $[0, 1]$ and attains its minimum at $t = 0$.
\end{corollary}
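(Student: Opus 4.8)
The plan is to deduce Corollary \ref{co} directly from Theorem \ref{th1} by an appropriate specialization, together with an elementary observation about convex even functions. First I would set $X = Y = I$ in Theorem \ref{th1}; this is legitimate since the theorem allows arbitrary $X, Y \in \mathcal{M}_q$, and it reduces the function there to exactly
\[
f(t) = \Phi\left(A^{1+t}, B^{1-t}\right) + \Phi\left(A^{1-t}, B^{1+t}\right),
\]
which is the function in the corollary. Theorem \ref{th1} then immediately gives that $f$ is convex on $\mathbb{R}$ and attains its minimum at $t = 0$. (One minor point to note: Theorem \ref{th1} is stated for strictly positive $A, B$, whereas the corollary asserts it for all $A, B \geq 0$; I would handle the singular case by a continuity/limiting argument, replacing $A$ by $A + \varepsilon I$, $B + \varepsilon I$, applying the strictly positive case, and letting $\varepsilon \to 0^+$, using that $\Phi$ is continuous as a multilinear map on finite-dimensional spaces — or alternatively by observing that the proof of Theorem \ref{th1} via spectral decomposition and inequality \eqref{q4} goes through verbatim when some $\lambda_i$ or $\mu_j$ equals zero, since \eqref{q3} only used $x = \lambda_i \mu_j^{-1} > 0$, which fails only on a measure-zero set of the exponents but \eqref{q4} after multiplying by $\lambda_i \mu_j$ remains valid by continuity.)

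The remaining content is the monotonicity statement: $f$ is decreasing on $[-1, 0]$ and increasing on $[0, 1]$. This follows from a general fact: if $g$ is convex on an interval and even about a point $c$ (here $c = 0$, since $f(-t) = f(t)$ as already recorded in the proof of Theorem \ref{th1}), then $g$ is monotone nonincreasing to the left of $c$ and monotone nondecreasing to the right of $c$. Indeed, for $-1 \le s < t \le 0$, convexity and evenness give, writing $t$ as a convex combination of $s$ and $-s$,
\[
f(t) \leq \frac{(-s) - t}{(-s) - s} f(s) + \frac{t - s}{(-s) - s} f(-s) = f(s),
\]
since both coefficients are nonnegative and sum to $1$ and $f(-s) = f(s)$; hence $f$ is decreasing on $[-1, 0]$. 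The argument on $[0, 1]$ is symmetric, or follows by applying the $[-1,0]$ case to $t \mapsto f(-t)$.

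I do not anticipate a genuine obstacle here — the corollary is a routine specialization plus a standard convexity lemma, and the only thing requiring a moment's care is the passage from strictly positive to merely positive $A, B$, which the continuity argument above dispatches cleanly. If the authors prefer, one could instead simply remark that the conclusion is immediate from Theorem \ref{th1} together with the evenness $f(-t) = f(t)$ and leave the elementary convexity reasoning to the reader; but I would include the short convex-combination computation for completeness.
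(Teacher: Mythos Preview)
Your proposal is correct and follows essentially the same route as the paper: the paper's entire proof is the single sentence ``Follows from Theorem~\ref{th1} with $X=Y=I$.'' You carry out exactly this specialization, and in fact you are more careful than the paper on two points it leaves implicit --- you supply the elementary argument that a convex even (matrix-valued) function is monotone on either side of its center, and you notice and close the gap between the hypothesis $A,B>0$ in Theorem~\ref{th1} and the hypothesis $A,B\ge 0$ in the corollary via the approximation $A+\varepsilon I$, $B+\varepsilon I$.
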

\begin{proof}
  Follows from Theorem \ref{th1} with $X=Y=I$.
\end{proof}
Note that applying Corollary \ref{co} with $\Phi(A,B)=A\otimes B$  implies \cite[Theorem 3.2]{MMA}.
%%%%%%%%%%%%%%%%%%%%%%%%%%%%%%%%%%%%%%%%%%%%%%%%%%%%%%%%%%%
\begin{corollary}\label{co2}
 Let $A,B\geq0$.  If $\Phi:\mathcal{M}_q^2\to \mathcal{M}_p$ is a    positive multilinear mapping, then the function
  $$f(t)=\Phi\left(A^{t},B^{1-t}\right)+\Phi\left(A^{1-t},B^{t}\right)$$
  is decreasing on $[0,\frac{1}{2}]$, increasing on $[\frac{1}{2},1]$ and attains its minimum at $t=\frac{1}{2}$.
\end{corollary}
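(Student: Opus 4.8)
The plan is to reduce Corollary~\ref{co2} to Corollary~\ref{co} by a substitution that shifts the exponent window $[0,1]$ to the window $[-1,1]$ used in Theorem~\ref{th1}. Observe that the function in Corollary~\ref{co2},
\[
f(t)=\Phi\left(A^{t},B^{1-t}\right)+\Phi\left(A^{1-t},B^{t}\right),
\]
is exactly the function in Corollary~\ref{co} reparametrised: if $g(s)=\Phi(A^{1+s},B^{1-s})+\Phi(A^{1-s},B^{1+s})$ is the function from Corollary~\ref{co}, then a direct check shows $f(t)=\tfrac12\,g(2t-1)$ is \emph{not} quite right because the exponents in $g$ sum to $2$ while those in $f$ sum to $1$. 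Instead I would apply Corollary~\ref{co} with $A^{1/2}$ in place of $A$ and $B^{1/2}$ in place of $B$: then
\[
g(s)=\Phi\bigl((A^{1/2})^{1+s},(B^{1/2})^{1-s}\bigr)+\Phi\bigl((A^{1/2})^{1-s},(B^{1/2})^{1+s}\bigr)
=\Phi\bigl(A^{\frac{1+s}{2}},B^{\frac{1-s}{2}}\bigr)+\Phi\bigl(A^{\frac{1-s}{2}},B^{\frac{1+s}{2}}\bigr),
\]
and setting $t=\tfrac{1+s}{2}$ (so $s=2t-1$) gives exactly $g(2t-1)=f(t)$.

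By Corollary~\ref{co} (applied to $A^{1/2},B^{1/2}$, which are again positive), $g$ is decreasing on $[-1,0]$, increasing on $[0,1]$, and attains its minimum at $s=0$. Translating through $s=2t-1$: the interval $s\in[-1,0]$ corresponds to $t\in[0,\tfrac12]$, the interval $s\in[0,1]$ corresponds to $t\in[\tfrac12,1]$, and $s=0$ corresponds to $t=\tfrac12$. Since $t\mapsto 2t-1$ is increasing, monotonicity is preserved: $f$ is decreasing on $[0,\tfrac12]$, increasing on $[\tfrac12,1]$, and attains its minimum at $t=\tfrac12$. This is the claim. (Alternatively, one could redo the proof of Theorem~\ref{th1} directly with the scalar inequality $g_0(s)=x^s+x^{-s}$ convex, multiplying \eqref{q3} by $(\lambda_i\mu_j)^{1/2}$ instead of $\lambda_i\mu_j$ and taking $X=Y=I$; both routes are routine.)

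The only thing to be careful about is the bookkeeping of exponents in the substitution — making sure the factor arrangement $A^{t}\otimes B^{1-t}$ versus $A^{(1+s)/2}\otimes B^{(1-s)/2}$ matches up, and that the monotonicity intervals map correctly under the affine change of variable. There is no real obstacle here: Corollary~\ref{co} already contains all the analytic content, and Corollary~\ref{co2} is a cosmetic reparametrisation obtained by rescaling the matrices. Thus the proof is a two-line appeal to Corollary~\ref{co} after replacing $A,B$ by $A^{1/2},B^{1/2}$ and substituting $t=\tfrac{1+s}{2}$.
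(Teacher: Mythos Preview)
Your proof is correct and is essentially the same as the paper's: the paper's one-line proof reads ``Replace $A,B$ in Theorem~\ref{th1} by $A^{1/2},B^{1/2}$ and $\frac{1+t}{2}$ by $t$,'' which is exactly your substitution (via Corollary~\ref{co}, i.e.\ Theorem~\ref{th1} with $X=Y=I$) followed by the affine reparametrisation $t=\tfrac{1+s}{2}$.
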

\begin{proof}
  Replace $A,B$ in Theorem \ref{th1} by $A^\frac{1}{2},B^\frac{1}{2}$ and $\frac{1+t}{2}$ by $t$.
\end{proof}
%%%%%%%%%%%%%%%%%%%%%%%%%%%%%%%%%%%%%%%%%%%%%%%%%%%%%%%%%%%%%%%%%%%%%%%
As an example, the mapping $(X,Y)\to \mathrm{Tr}(XY)$ is positive and multilinear. Hence, the function
$$f(t)=\mathrm{Tr}\left(A^{t}B^{1-t}\right)+\mathrm{Tr}\left(A^{1-t}B^{t}\right)$$
is decreasing on $[0,\frac{1}{2}]$, increasing on $[\frac{1}{2},1]$ and has a minimum at $t=\frac{1}{2}$. In other words
$$2\mathrm{Tr}\left(A^\frac{1}{2}B^\frac{1}{2}\right)\leq \mathrm{Tr}\left(A^{t}B^{1-t}\right)+\mathrm{Tr}\left(A^{1-t}B^{t}\right)\leq\mathrm{Tr}(A)+\mathrm{Tr}(B)$$
for all $t\in[0,1]$ and all positive matrices $A$ and $B$.
%%%%%%%%%%%%%%%%%%%%%%%%%%%%%%%%%%%%%%%%%%%%%%

%%%%%%%%%%%%%%%%%%%%%%%%%%%%%%%%%%%%%%%%%%%%%%%%%%%%%%%%%%%%%%%%%%%%%%%%%%%%%%%%%%%%%%%%%%%%%

\section{Reverse of the Choi--Davis--jensen inequality}

In this section, we consider reverse type inequalities of the Choi--Davis--Jensen inequality for positive multilinear mappings. The following lemma is well-known.
\begin{lemma}\cite{MPS} \label{lem-RCDJI}
Let $A_i$ be strictly positive matrices with $m\leq A_i\leq M$  for some scalars $0<m<M$ and let $C_i$ \ $(i=1,\ldots,k)$ be matrices with $\sum_{i=1}^k C_i^*C_i=I$.   If a real valued continuous function $f$ on $[m,M]$ is convex (resp. concave) and $f(t)>0$ on $[m,M]$, then
\[
\sum_{i=1}^k C_i^*f(A_i)C_i \leq \a(m,M,f) f\left( \sum_{i=1}^k C_i^*A_iC_i \right)
\]
\[
\left( {\rm resp.} \quad \b(m,M,f) f\left( \sum_{i=1}^k C_i^*A_iC_i \right) \leq \sum_{i=1}^k C_i^*f(A_i)C_i, \right)
\]
where
{\small\begin{equation} \label{eq:af}
\a(m,M,f) = \max \left\{ \frac{1}{f(t)} \left( \frac{f(M)-f(m)}{M-m}(t-m)+f(m) \right);\ m\leq t\leq M \right\}
\end{equation}}
{\small\begin{align} \label{eq:bf}
\left(resp. \ \b(m,M,f) = \min \left\{ \frac{1}{f(t)} \left( \frac{f(M)-f(m)}{M-m}(t-m)+f(m) \right);  m\leq t\leq M \right\}\right).
\end{align}}
\end{lemma}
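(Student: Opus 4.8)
The plan is to reduce this to a scalar optimization problem by diagonalizing each $A_i$ and exploiting the hypothesis $\sum_i C_i^*C_i = I$. First I would write the spectral decomposition $A_i = \sum_j \lambda_{ij} E_{ij}$ with $\sum_j E_{ij} = I$ and all $\lambda_{ij} \in [m,M]$, so that $f(A_i) = \sum_j f(\lambda_{ij}) E_{ij}$. The defining property of $\alpha(m,M,f)$ in \eqref{eq:af} is exactly the statement that the scalar inequality $f(t) \le \alpha(m,M,f)\, \ell(t)$ holds for all $t\in[m,M]$, where $\ell(t) = \frac{f(M)-f(m)}{M-m}(t-m)+f(m)$ is the chord (secant line) of $f$ over $[m,M]$; this uses $f(t)>0$ to make the ratio meaningful and the choice of $\alpha$ as the maximum of that ratio. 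Dually, for a concave $f$ the chord lies below the graph and $\beta(m,M,f)$ is the minimum of the same ratio.

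The key step is then: since $f$ is convex and $\ell$ is affine with $f \le \ell$ on $[m,M]$ pointwise (by convexity, the graph lies below its own chord), applying this at each eigenvalue gives $f(\lambda_{ij}) \le \ell(\lambda_{ij})$, hence
\[
f(A_i) = \sum_j f(\lambda_{ij}) E_{ij} \le \sum_j \ell(\lambda_{ij}) E_{ij} = \ell(A_i) = c_1 A_i + c_0 I,
\]
where $c_1 = \frac{f(M)-f(m)}{M-m}$ and $c_0 = f(m) - c_1 m$; the point is that $\ell$ being affine commutes with the functional calculus. Sandwiching by $C_i^*(\cdot)C_i$ and summing over $i$, then using $\sum_i C_i^*C_i = I$, yields
\[
\sum_{i=1}^k C_i^* f(A_i) C_i \le \sum_{i=1}^k C_i^*\bigl(c_1 A_i + c_0 I\bigr) C_i = c_1 \sum_{i=1}^k C_i^* A_i C_i + c_0 I = \ell\!\left( \sum_{i=1}^k C_i^* A_i C_i \right).
\]

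It remains to replace $\ell(\,\cdot\,)$ on the right by $\alpha(m,M,f)\, f(\,\cdot\,)$. For this I would note that $S := \sum_i C_i^* A_i C_i$ satisfies $m \le S \le M$ (again because $\sum C_i^*C_i = I$ and $m \le A_i \le M$), so every eigenvalue of $S$ lies in $[m,M]$; applying the scalar inequality $\ell(t) \le \alpha(m,M,f) f(t)$ through the functional calculus of $S$ gives $\ell(S) \le \alpha(m,M,f)\, f(S)$, and combining the two displays finishes the convex case. The concave case is entirely parallel: convexity is replaced by concavity (so the chord now lies below the graph, $\ell \le f$), the inequalities reverse, and $\alpha$ is replaced by $\beta$ via $\beta(m,M,f) f(t) \le \ell(t)$. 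I expect the only subtlety — not really an obstacle — to be bookkeeping the direction of inequalities and confirming that the positivity hypothesis $f(t)>0$ is genuinely needed only to make the ratio $\ell(t)/f(t)$ well-defined and to guarantee $f(S)\ge 0$, not for the chord comparison itself; one should also remember that $f(t)>0$ on $[m,M]$ together with continuity gives $f(S)>0$ when $S$ is invertible, but the inequality as stated is fine without invertibility since only $f(S)\ge 0$ is used.
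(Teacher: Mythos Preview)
The paper does not prove this lemma at all: it is quoted verbatim from \cite{MPS} with the remark ``The following lemma is well-known'' and no argument is supplied. So there is nothing to compare against.

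Your proof is correct and is in fact the standard Mond--Pe\v{c}ari\'{c} argument. One small slip: in your first paragraph you write that the defining property of $\a(m,M,f)$ is $f(t)\le \a(m,M,f)\,\ell(t)$, but since $\a=\max_{[m,M]}\ell(t)/f(t)$ and $f(t)>0$, the correct inequality is $\ell(t)\le \a(m,M,f)\,f(t)$ --- which is exactly what you use later when you pass from $\ell(S)$ to $\a(m,M,f)\,f(S)$. With that typo fixed, the chain
\[
\sum_i C_i^*f(A_i)C_i \;\le\; \sum_i C_i^*\ell(A_i)C_i \;=\; \ell\!\left(\sum_i C_i^*A_iC_i\right) \;\le\; \a(m,M,f)\,f\!\left(\sum_i C_i^*A_iC_i\right)
\]
is exactly right, the first inequality coming from convexity ($f\le \ell$ on $[m,M]$) and the second from the definition of $\a$ together with $m\le \sum_i C_i^*A_iC_i\le M$. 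The concave case is indeed the mirror image.
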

%%%%%%%%%%%%%%%%%%%%%%%%%%%%%%%%%%%%%%%%%%%%%%%%%%%%%%%%%%%%%%%%%%%%%%
\begin{theorem}\label{seo1}
Let $A_i$ be positive matrices with $m\leq A_i\leq M$ for some scalars $0<m<M$ \ $(i=1,\ldots,k)$. Let $\Phi$ be a unital positive multilinear map. If a real valued continuous function $f$ is super-multiplicative convex on $[m,M]$ (resp. sub-multiplicative concave), then
\[
\Phi(f(A_1),\ldots,f(A_k)) \leq \a(m^k,M^k,f) f(\Phi(A_1,\ldots,A_k))
\]
\[
({\rm resp.} \quad \b(m^k,M^k,f) f(\Phi(A_1,\ldots,A_k)) \leq \Phi(f(A_1),\ldots,f(A_k)),
\]
where $\a(m,M,f)$ and $\b(m,M,f)$ are defined by  \eqref{eq:af} and \eqref{eq:bf}, respectively.
\end{theorem}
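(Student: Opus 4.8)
The plan is to imitate the proof of Theorem \ref{thm-CDJ}, replacing the use of matrix convexity (Jensen's inequality) by the scalar-coefficient reverse Jensen inequality of Lemma \ref{lem-RCDJI}. First I would fix spectral decompositions $A_i = \sum_{j=1}^{q_i}\lambda_{ij}P_{ij}$ with $\sum_j P_{ij}=I$ and $m\le \lambda_{ij}\le M$. Then, exactly as in \eqref{qq3}, multilinearity gives
\[
\Phi(f(A_1),\ldots,f(A_k)) = \sum_{j_1,\ldots,j_k} f(\lambda_{1j_1})\cdots f(\lambda_{kj_k})\,\Phi(P_{1j_1},\ldots,P_{kj_k}),
\]
and super-multiplicativity of $f$ yields $f(\lambda_{1j_1})\cdots f(\lambda_{kj_k}) \ge f(\lambda_{1j_1}\cdots\lambda_{kj_k})$, so the positivity of each $\Phi(P_{1j_1},\ldots,P_{kj_k})$ (together with the monotonicity Lemma) gives
\[
\Phi(f(A_1),\ldots,f(A_k)) \le \sum_{j_1,\ldots,j_k} f(\lambda_{1j_1}\cdots\lambda_{kj_k})\,\Phi(P_{1j_1},\ldots,P_{kj_k}).
\]

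Next I would write $C(j_1,\ldots,j_k):=\Phi(P_{1j_1},\ldots,P_{kj_k})^{1/2}$ and observe that $\sum_{j_1,\ldots,j_k} C(j_1,\ldots,j_k)^2 = \Phi(I,\ldots,I)=I$ by multilinearity and unitality, so these matrices form a system of the type $\sum C_i^*C_i = I$ required by Lemma \ref{lem-RCDJI}. Each product $\lambda_{1j_1}\cdots\lambda_{kj_k}$ lies in $[m^k,M^k]$, and $f$ restricted to $[m^k,M^k]$ is convex and strictly positive (this is part of the hypothesis, since $f$ is super-multiplicative convex with $f>0$; I should state the positivity assumption explicitly as in Lemma \ref{lem-RCDJI}). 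Applying Lemma \ref{lem-RCDJI} with the scalars $A_i$ replaced by the scalars $\lambda_{1j_1}\cdots\lambda_{kj_k}$ (viewed as $1\times1$ or scalar multiples of $I$) and the coefficient matrices $C(j_1,\ldots,j_k)$ gives
\[
\sum_{j_1,\ldots,j_k} C(j_1,\ldots,j_k) f(\lambda_{1j_1}\cdots\lambda_{kj_k}) C(j_1,\ldots,j_k) \le \a(m^k,M^k,f)\, f\!\left( \sum_{j_1,\ldots,j_k} C(j_1,\ldots,j_k)\,\lambda_{1j_1}\cdots\lambda_{kj_k}\,C(j_1,\ldots,j_k) \right).
\]
The sum inside $f$ on the right equals $\sum_{j_1,\ldots,j_k}\lambda_{1j_1}\cdots\lambda_{kj_k}\,\Phi(P_{1j_1},\ldots,P_{kj_k}) = \Phi(A_1,\ldots,A_k)$ again by multilinearity, so chaining the two displayed inequalities gives $\Phi(f(A_1),\ldots,f(A_k)) \le \a(m^k,M^k,f) f(\Phi(A_1,\ldots,A_k))$, as desired. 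The concave, sub-multiplicative case is entirely parallel, reversing every inequality and using $\b$ in place of $\a$.

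The main obstacle I anticipate is the clean application of Lemma \ref{lem-RCDJI}: the lemma is stated for matrices $A_i$, whereas here the relevant "variables" are the scalars $\lambda_{1j_1}\cdots\lambda_{kj_k}$, indexed by the multi-index $(j_1,\ldots,j_k)$ rather than a single index $i$. I would handle this by flattening the multi-index into a single enumeration and treating each scalar as a multiple of the identity $I\in\mathcal{M}_p$, for which $f$ acts entrywise as the scalar function; one must check that $f$ evaluated on these scalar matrices behaves as expected and that $f$ is indeed convex and positive on the enlarged interval $[m^k,M^k]$ (not merely on $[m,M]$), which is why the constants in the statement involve $m^k$ and $M^k$. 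A secondary minor point is justifying $\sum C(j_1,\ldots,j_k)^2 = I$, which follows by expanding $\Phi(I,\ldots,I)$ via $I=\sum_j P_{ij}$ in each slot and using multilinearity; this is routine but should be noted explicitly.
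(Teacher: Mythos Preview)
Your approach is exactly the one the paper uses: expand via spectral decompositions and multilinearity, pass from $\prod_i f(\lambda_{ij_i})$ to $f(\prod_i \lambda_{ij_i})$ by super-multiplicativity, set $C(j_1,\ldots,j_k)=\Phi(P_{1j_1},\ldots,P_{kj_k})^{1/2}$ so that $\sum C^2=I$, and apply Lemma~\ref{lem-RCDJI} on $[m^k,M^k]$ to the scalars $\prod_i \lambda_{ij_i}$. One slip to fix: super-multiplicativity gives $f(\lambda_{1j_1})\cdots f(\lambda_{kj_k}) \le f(\lambda_{1j_1}\cdots\lambda_{kj_k})$, not $\ge$; your displayed conclusion is correct, but the intermediate inequality you wrote is reversed.
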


\begin{proof}
We only prove the super-multiplicative convex case. Assume that $A_i=\sum_{j=1}^{q_i}\lambda_{ij}P_{ij}$ $(i=1,\cdots, k)$ is the spectral decomposition of each $A_i$ for which $\sum_{j=1}^{q_i}P_{ij}=I$. Then
{\small\begin{align}\label{qq3}
\Phi(f(A_1), f(A_2),\cdots, f(A_k))&=\Phi\left(\sum_{j=1}^{q_1}f(\lambda_{1j})P_{1j},\cdots,\sum_{j=1}^{q_k}f(\lambda_{kj})P_{kj}\right)\\
&=\sum_{j_1}^{q_1}\sum_{j_2}^{q_2}\cdots\sum_{j_k}^{q_k}f(\lambda_{1j_1})f(\lambda_{2j_2})\cdots f(\lambda_{kj_k})\Phi(P_{1j_1},\cdots, P_{kj_k}),\nonumber
\end{align}}
by multilinearity of $\Phi$. With  $C(j_1,\cdots, j_k):=\left(\Phi(P_{1j_1},\cdots, P_{kj_k})\right)^{\frac{1}{2}}$  we get
\begin{align*}
& \Phi(f(A_1),\cdots, f(A_k)) \\
& = \sum_{j_1}^{q_1}\sum_{j_2}^{q_2}\cdots\sum_{j_k}^{q_k} f(\lambda_{1j_1})f(\lambda_{2j_2})\cdots f(\lambda_{kj_k})\Phi(P_{1j_1},\cdots, P_{kj_k}) \\
& \leq \sum_{j_1}^{q_1}\sum_{j_2}^{q_2}\cdots\sum_{j_k}^{q_k}C(j_1,\cdots, j_k)f(\lambda_{1j_1}\lambda_{2j_2}\cdots \lambda_{kj_k})C(j_1,\cdots, j_k) \\
& \leq \a(m^k,M^k,f) f\left(\sum_{j_1}^{q_1}\sum_{j_2}^{q_2}\cdots\sum_{j_k}^{q_k}C(j_1,\cdots, j_k)\lambda_{1j_1}\lambda_{2j_2}\cdots \lambda_{kj_k}C(j_1,\cdots, j_k)\right)\\
& = \a(m^k,M^k,f) f\left(\sum_{j_1}^{q_1}\sum_{j_2}^{q_2}\cdots\sum_{j_k}^{q_k}\lambda_{1j_1}\lambda_{2j_2}\cdots \lambda_{kj_k}\Phi(P_{1j_1},\cdots, P_{kj_k})\right) \\
& = \a(m^k,M^k,f) f\left(\Phi(A_1,\cdots, A_k)\right).
\end{align*}
\end{proof}

\begin{corollary} \label{co-2-1}
Let $A_i$ \  $(i=1,\ldots,k)$ be positive matrices with $m\leq A_i\leq M$ for some scalars $0<m<M$.  Let $\Phi$ be a unital positive multilinear map. If  $r \not\in [0,1]$, then
\[
\Phi(A_1^r,\ldots,A_k^r) \leq K(m^k,M^k,r)\ \Phi(A_1,\ldots,A_k)^r.
\]
If $r\in [0,1]$, then the   inequality is reversed. Here the generalized Kantorovich constant $K(m,M,r)$ is defined by
\begin{equation} \label{eq:K}
K(m,M,r) = \frac{mM^r-Mm^r}{(r-1)(M-m)} \left( \frac{r-1}{r} \frac{M^r-m^r}{mM^r-Mm^r} \right)^r.
\end{equation}
In particular,
\[
\Phi(A_1^2,\ldots,A_k^2) \leq \frac{(M^k+m^k)^2}{4M^km^k} \Phi(A_1,\ldots,A_k)^2
\]
and
\[
\Phi(A_1^{-1},\ldots,A_k^{-1}) \leq \frac{(M^k+m^k)^2}{4M^km^k} \Phi(A_1,\ldots,A_k)^{-1}.
\]
\end{corollary}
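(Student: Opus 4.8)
The plan is to read Corollary~\ref{co-2-1} off from Theorem~\ref{seo1} by taking $f(t)=t^r$. First I would record the relevant elementary properties of the power function on $[m,M]\subset(0,\infty)$: it is \emph{multiplicative}, $f(xy)=(xy)^r=x^ry^r=f(x)f(y)$, hence simultaneously sub-multiplicative and super-multiplicative; it is strictly positive there; and $t\mapsto t^r$ is convex when $r\notin[0,1]$ and concave when $r\in[0,1]$. Consequently, for $r\notin[0,1]$ the hypothesis of the first part of Theorem~\ref{seo1} (super-multiplicative convex) is satisfied, giving $\Phi(A_1^r,\ldots,A_k^r)\le\a(m^k,M^k,t^r)\,\Phi(A_1,\ldots,A_k)^r$; for $r\in[0,1]$ the hypothesis of the second part (sub-multiplicative concave) is satisfied, giving the reversed inequality with constant $\b(m^k,M^k,t^r)$. (For $r\in[0,1]$ this is a genuine reverse of Corollary~\ref{co5}(1), consistent with $K(m^k,M^k,r)\le1$ there.)

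The substantive point is then the identification $\a(m^k,M^k,t^r)=\b(m^k,M^k,t^r)=K(m^k,M^k,r)$ with the generalized Kantorovich constant of \eqref{eq:K}. Putting $s=m^k$, $S=M^k$, this amounts to the classical one-variable extremal problem of maximizing (resp.\ minimizing) over $t\in[s,S]$ the function $h(t)=t^{-r}\bigl(\tfrac{S^r-s^r}{S-s}(t-s)+s^r\bigr)=a\,t^{1-r}+b\,t^{-r}$, where $a=\tfrac{S^r-s^r}{S-s}$ and $b=\tfrac{s^rS-sS^r}{S-s}$. Differentiating, the unique critical point is $t_0=\tfrac{r}{r-1}\cdot\tfrac{sS^r-s^rS}{S^r-s^r}$, which lies in $(s,S)$, and a short simplification yields $h(t_0)=K(s,S,r)$; whether $t_0$ is a maximizer or a minimizer is decided by the convexity resp.\ concavity of $t^r$ on $[s,S]$. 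This extremal computation is entirely standard (see, e.g., \cite{MA,MPS}), so I would cite it rather than grind through the routine calculus, perhaps recording the critical point in a remark.

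For the two displayed special cases I would simply substitute $r=2$ and $r=-1$, both lying outside $[0,1]$, into \eqref{eq:K} with $s=m^k$ and $S=M^k$; a direct computation gives $K(m^k,M^k,2)=K(m^k,M^k,-1)=\tfrac{(M^k+m^k)^2}{4M^km^k}$, and the two displayed inequalities then follow from the general one.

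I expect no genuine obstacle beyond the identification of $\a(m^k,M^k,t^r)$ with $K(m^k,M^k,r)$; the remainder is a one-line appeal to Theorem~\ref{seo1} together with arithmetic. The only conceptual subtlety worth flagging is why the constants carry $m^k$ and $M^k$ rather than $m$ and $M$: unitality, positivity and multilinearity of $\Phi$ give $m^kI=\Phi(mI,\ldots,mI)\le\Phi(A_1,\ldots,A_k)\le\Phi(MI,\ldots,MI)=M^kI$, so the spectrum of $\Phi(A_1,\ldots,A_k)$ lies in $[m^k,M^k]$ — precisely the scaling already encoded in Theorem~\ref{seo1}, and the one place where multilinearity, as opposed to linearity, alters the constant.
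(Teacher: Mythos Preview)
Your proposal is correct and follows exactly the route the paper intends: the corollary is stated immediately after Theorem~\ref{seo1} without proof, so the implicit argument is precisely the specialization $f(t)=t^r$ (multiplicative, hence both sub- and super-multiplicative; convex for $r\notin[0,1]$, concave for $r\in[0,1]$) together with the standard identification $\a(m^k,M^k,t^r)=\b(m^k,M^k,t^r)=K(m^k,M^k,r)$. Your added remarks on the critical-point computation and on why $m^k,M^k$ replace $m,M$ are accurate elaborations of what the paper leaves to the reader.
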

%%%%%%%%%%%%%%%%%%%%%%%%%%%%%%%%%%%%%%%%%%%%%%%%%%%%%%%%%%%%%%%%%%%%%%%%%%%%%%%%%%%%%%%%%
\begin{theorem}\label{thmss}
Let $\sigma$ be a matrix mean with the representing function $f$ which is sub-multiplicative and not affine. Let $A_i,B_i$ be positive matrices with $m\leq A_i, B_i \leq M$ for some scalars $0<m<M$ and $i=1,\ldots,k$. Let $\Phi$ be a strictly positive unital multilinear map. Then
\[
\Phi(A_1 \sigma B_1,\ldots,A_k \sigma B_k) \geq \b\left(\frac{m^k}{M^k},\frac{M^k}{m^k},f\right) \Phi(A_1,\ldots,A_k)\ \sigma \ \Phi(B_1,\ldots,B_k)
\]
where $\b(m,M,f)$ is defined by \eqref{eq:bf}. In particular, for each $\a \in [0,1]$
\[
\Phi(A_1 \s_{\a} B_1,\ldots,A_k \s_{\a} B_k) \geq K\left(\frac{m^k}{M^k},\frac{M^k}{m^k},\a\right) \Phi(A_1,\ldots,A_k)\ \s_{\a} \ \Phi(B_1,\ldots,B_k)
\]
where $K(m,M,\a)$ is defined by \eqref{eq:K}.
\end{theorem}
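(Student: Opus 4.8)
The plan is to follow the proof of Proposition~\ref{thm-s} almost verbatim, only replacing the appeal to the Choi--Davis--Jensen inequality (Theorem~\ref{thm-CDJ}) by its reverse (Theorem~\ref{seo1}). Fix $A_1,\ldots,A_k>0$ and, exactly as in Proposition~\ref{thm-s}, set
\[
\Psi(X_1,\ldots,X_k):=\Phi(A_1,\ldots,A_k)^{-\frac{1}{2}}\,\Phi\left(A_1^{\frac{1}{2}}X_1A_1^{\frac{1}{2}},\ldots,A_k^{\frac{1}{2}}X_kA_k^{\frac{1}{2}}\right)\Phi(A_1,\ldots,A_k)^{-\frac{1}{2}},
\]
which is a unital strictly positive multilinear map (here $\Phi(A_1,\ldots,A_k)>0$ because $\Phi$ is strictly positive and each $A_i>0$). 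With $Y_i:=A_i^{-\frac{1}{2}}B_iA_i^{-\frac{1}{2}}$ one has $A_i\,\sigma\,B_i=A_i^{\frac{1}{2}}f(Y_i)A_i^{\frac{1}{2}}$, so that
\[
\Psi(f(Y_1),\ldots,f(Y_k))=\Phi(A_1,\ldots,A_k)^{-\frac{1}{2}}\Phi(A_1\sigma B_1,\ldots,A_k\sigma B_k)\Phi(A_1,\ldots,A_k)^{-\frac{1}{2}}
\]
and $\Psi(Y_1,\ldots,Y_k)=\Phi(A_1,\ldots,A_k)^{-\frac{1}{2}}\Phi(B_1,\ldots,B_k)\Phi(A_1,\ldots,A_k)^{-\frac{1}{2}}$.

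Next I would pin down the spectral bounds of the $Y_i$: from $m\le A_i\le M$ and $m\le B_i\le M$ one gets $mA_i^{-1}\le Y_i\le MA_i^{-1}$, hence $\frac{m}{M}\le Y_i\le\frac{M}{m}$. Since $f$ is the representing function of a matrix mean, it is matrix monotone, therefore matrix concave, and it is strictly positive on $(0,\infty)$; together with the sub-multiplicativity hypothesis this lets me invoke the concave case of Theorem~\ref{seo1} for the unital strictly positive multilinear map $\Psi$ and the matrices $Y_i$ with spectra in $[\frac{m}{M},\frac{M}{m}]$, yielding
\[
\b\left(\frac{m^k}{M^k},\frac{M^k}{m^k},f\right)f(\Psi(Y_1,\ldots,Y_k))\le\Psi(f(Y_1),\ldots,f(Y_k)),
\]
where I used $\left(\frac{m}{M}\right)^k=\frac{m^k}{M^k}$ and $\left(\frac{M}{m}\right)^k=\frac{M^k}{m^k}$.

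Finally I substitute the two identities of the first paragraph into this inequality and conjugate both sides by $\Phi(A_1,\ldots,A_k)^{\frac{1}{2}}$. By the Kubo--Ando formula $C\,\sigma\,D=C^{\frac{1}{2}}f(C^{-\frac{1}{2}}DC^{-\frac{1}{2}})C^{\frac{1}{2}}$ with $C=\Phi(A_1,\ldots,A_k)$ and $D=\Phi(B_1,\ldots,B_k)$, the left-hand side becomes $\b\left(\frac{m^k}{M^k},\frac{M^k}{m^k},f\right)\Phi(A_1,\ldots,A_k)\,\sigma\,\Phi(B_1,\ldots,B_k)$ and the right-hand side becomes $\Phi(A_1\sigma B_1,\ldots,A_k\sigma B_k)$, which is the assertion. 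For the particular case, take $f(t)=t^{\a}$ with $\a\in[0,1]$: this is the representing function of $\s_{\a}$, it is both sub- and super-multiplicative, and $\b(m,M,t^{\a})$ coincides with the generalized Kantorovich constant $K(m,M,\a)$ of \eqref{eq:K} (for $\a\in\{0,1\}$ the statement is trivial, $\sigma$ being a coordinate projection and $K=1$), so the second displayed inequality follows at once.

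I do not expect a genuine obstacle: the two points needing care are the verification that $\Psi$ is again unital, strictly positive and multilinear (already done in Proposition~\ref{thm-s}) and the spectral localization $\frac{m}{M}\le Y_i\le\frac{M}{m}$, which is what forces the endpoints $\frac{m^k}{M^k},\frac{M^k}{m^k}$ inside $\b$; the rest is a direct transcription of the argument of Proposition~\ref{thm-s} with Theorem~\ref{seo1} in place of Theorem~\ref{thm-CDJ}.
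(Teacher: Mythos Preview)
Your proof is correct and follows essentially the same route as the paper: define the auxiliary unital strictly positive multilinear map $\Psi$, set $Y_i=A_i^{-1/2}B_iA_i^{-1/2}$ with spectra in $[m/M,M/m]$, apply the concave case of Theorem~\ref{seo1}, and conjugate back by $\Phi(A_1,\ldots,A_k)^{1/2}$. Your write-up is in fact slightly more detailed than the paper's (you justify the concavity of $f$, the positivity of $f$, and the identification $\b(m,M,t^{\a})=K(m,M,\a)$), but the argument is identical.
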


\begin{proof}
Let $\Psi$ be a strictly positive multilinear map defined by
\[
\Psi(X_1,\ldots,X_k)=\Phi(A_1,\ldots,A_k)^{-\frac{1}{2}}\Phi(A_1^{\frac{1}{2}}X_1A_1^{\frac{1}{2}},\ldots,A_k^{\frac{1}{2}}X_kA_k^{\frac{1}{2}})\Phi(A_1,\ldots,A_k)^{-\frac{1}{2}}
\]
and $Y_i:=A^{\frac{-1}{2}}_iB_iA_i^{\frac{-1}{2}}$ $(i=1,\cdots k)$. Then it follows from $m/M \leq Y_i \leq M/m$ that
\begin{align*}
&\b\left(\frac{m^k}{M^k},\frac{M^k}{m^k},f\right)
 f\left(\Phi(A_1,\cdots, A_k)^{\frac{-1}{2}}\Phi(B_1,\cdots, B_k)\Phi(A_1,\cdots, A_k)^{\frac{-1}{2}}\right) \\
& = \b\left(\frac{m^k}{M^k},\frac{M^k}{m^k},f\right) f\big(\Psi(Y_1,\cdots, Y_k)\big) \\
& \leq \Psi\left(f(Y_1),\cdots, f(Y_k)\right)\quad (\mbox{by Theorem \ref{seo1}})\\
& =  \Phi(A_1,\cdots, A_k)^{\frac{-1}{2}}\Phi (A_1\sigma B_1,\cdots , A_k\sigma B_k)\Phi(A_1,\cdots, A_k)^{\frac{-1}{2}}.
\end{align*}
Multiplying both sides by $\Phi(A_1,\cdots, A_k)^{\frac{1}{2}}$ we get the desired result.
\end{proof}

  Utilizing  Lemma~\ref{lem-RCDJI}, we present  a reverse   additivity inequality for matrix means.

\begin{lemma} \label{lem-RAM}
Let $\sigma$ be a matrix mean with the representing function $f$, and $A,B,C,D$ positive matrices with $0<m\leq A,B,C,D \leq M$ for some scalars $m\leq M$. Then
\[
\b\left(\frac{m}{M},\frac{M}{m},f\right) \left[ (A+B)\ \sigma \ (C+D) \right] \leq A\ \sigma \ C + B \ \sigma \ D
\]
where $\b(m,M,f)$ is defined by \eqref{eq:bf}. In particular, for each $\a \in [0,1]$
\[
K(m/M,M/m,\a) \left[ (A+B)\ \s_{\a} \ (C+D) \right] \leq A\ \s_{\a} \ C + B \ \s_{\a} \ D
\]
and for $\a=1/2$
\[
\frac{2\sqrt[4]{Mm}}{\sqrt{M}+\sqrt{m}} \left[ (A+B)\ \s \ (C+D) \right] \leq A\ \s \ C + B \ \s \ D
\]
\end{lemma}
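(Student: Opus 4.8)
The plan is to deduce the inequality from a single application of Lemma~\ref{lem-RCDJI}, the decisive step being the choice of a congruence that simultaneously produces a system $\{C_i\}$ with $\sum C_i^*C_i=I$ and re-assembles the relevant means. I would put $P=A+B$ and set
\[
C_1=A^{\frac12}P^{-\frac12},\qquad C_2=B^{\frac12}P^{-\frac12},
\]
which makes sense as $P>0$; then $C_1^*C_1+C_2^*C_2=P^{-\frac12}(A+B)P^{-\frac12}=I$. I would also put $\widetilde A=A^{-\frac12}CA^{-\frac12}$ and $\widetilde B=B^{-\frac12}DB^{-\frac12}$, and note that $0<m\le A,B,C,D\le M$ forces $\widetilde A,\widetilde B$ to be strictly positive with $\tfrac mM\le\widetilde A,\widetilde B\le\tfrac Mm$.

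Next I would apply Lemma~\ref{lem-RCDJI} with $k=2$ on the interval $[\tfrac mM,\tfrac Mm]$ to $\widetilde A,\widetilde B$ and the $C_i$ above; here $f$, being the representing function of $\sigma$, is matrix monotone, hence matrix concave and in particular concave, and $f>0$ on $[\tfrac mM,\tfrac Mm]$ (the representing function of a matrix mean is positive on $(0,\infty)$). The concave case gives
\[
\b\!\left(\tfrac mM,\tfrac Mm,f\right)\,f\!\left(C_1^*\widetilde AC_1+C_2^*\widetilde BC_2\right)\ \le\ C_1^*f(\widetilde A)C_1+C_2^*f(\widetilde B)C_2 .
\]
I would then simplify both sides: since $A^{\frac12}\widetilde AA^{\frac12}=C$ and $B^{\frac12}\widetilde BB^{\frac12}=D$, the argument of $f$ on the left is $P^{-\frac12}(C+D)P^{-\frac12}$, while the Kubo--Ando formula $A\ \sigma\ C=A^{\frac12}f(\widetilde A)A^{\frac12}$ (and the same for $B\ \sigma\ D$) shows that the right-hand side is $P^{-\frac12}(A\ \sigma\ C+B\ \sigma\ D)P^{-\frac12}$. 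Conjugating this display by $P^{\frac12}$ and using $P^{\frac12}f\!\left(P^{-\frac12}(C+D)P^{-\frac12}\right)P^{\frac12}=(A+B)\ \sigma\ (C+D)$ gives exactly
\[
\b\!\left(\tfrac mM,\tfrac Mm,f\right)\big[(A+B)\ \sigma\ (C+D)\big]\ \le\ A\ \sigma\ C+B\ \sigma\ D .
\]
For the particular cases I would specialise to $\sigma=\s_{\a}$, whose representing function is $t^{\a}$, so that $\b(\tfrac mM,\tfrac Mm,t^{\a})=K(\tfrac mM,\tfrac Mm,\a)$ with $K$ as in \eqref{eq:K}; the value for $\a=\tfrac12$ follows by evaluating \eqref{eq:bf} directly, the minimum being attained at $t=1$ since $\tfrac mM\cdot\tfrac Mm=1$.

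The only step that is not routine bookkeeping is the choice of $C_1,C_2$: one must realise that congruence by $(A+B)^{-1/2}$ both normalises $C_1^*C_1+C_2^*C_2=I$ and converts the two weighted sums appearing in Lemma~\ref{lem-RCDJI} into $(A+B)\ \sigma\ (C+D)$ and $A\ \sigma\ C+B\ \sigma\ D$. After that everything is manipulation of the Kubo--Ando expression $X\ \sigma\ Y=X^{1/2}f(X^{-1/2}YX^{-1/2})X^{1/2}$; the one point to keep track of is that $\widetilde A,\widetilde B$ have spectra in $[m/M,M/m]$, so the converse constant must be taken as $\b(m/M,M/m,f)$ and not $\b(m,M,f)$.
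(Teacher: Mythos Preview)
Your proof is correct and follows the same route as the paper: the paper sets $X=A^{1/2}(A+B)^{-1/2}$, $Y=B^{1/2}(A+B)^{-1/2}$, $V=A^{-1/2}CA^{-1/2}$, $W=B^{-1/2}DB^{-1/2}$, notes $X^*X+Y^*Y=I$ and $m/M\le V,W\le M/m$, applies Lemma~\ref{lem-RCDJI}, and then unwinds the congruence---exactly your $C_1,C_2,\widetilde A,\widetilde B$ argument. Your added remarks (concavity and positivity of $f$, location of the minimum at $t=1$ for $\a=1/2$) are correct supplementary detail.
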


\begin{proof}
Put $X=A^{1/2}(A+B)^{-1/2}, Y=B^{1/2}(A+B)^{-1/2}, V=A^{-1/2}CA^{-1/2}$ and $W=B^{-1/2}DB^{-1/2}$. Since $m/M\leq V,W\leq M/m$ and $X^*X+Y^*Y=I$, it follows from Lemma~\ref{lem-RCDJI} that
\begin{align*}
& (A+B)\ \sigma \ (C+D) \\
& = (A+B)^{1/2}f(X^*VX+Y^*WY)(A+B)^{1/2} \\
& \leq \b(m/M,M/m,f)^{-1} (A+B)^{1/2}\left[ X^*f(V)X+Y^*f(W)Y \right] (A+B)^{1/2} \\
& = \b(m/M,M/m,f)^{-1}\left(  A\ \sigma \ C + B \ \sigma \ D \right).
\end{align*}
\end{proof}

We show a reverse inequality for positive multilinear maps and matrix means in Corollary~\ref{co-ML-OP}:
\begin{theorem}
Let $\sigma$ be a matrix mean with the representing function $f$, and $A,B,C,D$ positive matirces with $0<m\leq A,B,C,D \leq M$ for some scalars $m\leq M$. Let $\Phi$ be a unital positive multilinear map. Then
\begin{align*}
& \Phi(A,B)+\Phi(B,A) \\
& \leq \b\left( \frac{m^2}{M^2},\frac{M^2}{m^2},f \right)^{-2} \left[ \Phi(A\ \sigma \ B, A\ \sigma^{0} \ B)+\Phi(A\ \sigma^{0} \ B, A\ \sigma \ B) \right]
\end{align*}
where $\b(m,M,f)$ is defined by \eqref{eq:bf}.
\end{theorem}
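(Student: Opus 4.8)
The plan is to reverse, inequality by inequality, the proof of Corollary~\ref{co-ML-OP}, paying a Kantorovich‑type factor each time a direction is flipped. Before anything else I would record the scalar bounds that propagate through a unital positive multilinear map: from $m\le A,B\le M$, monotonicity of $\Phi$ together with multilinearity and unitality gives
\[
m^{2}I=\Phi(mI,mI)\le \Phi(A,B),\ \Phi(B,A)\le \Phi(MI,MI)=M^{2}I .
\]
In particular $\Phi(A,B)$ and $\Phi(B,A)$ are strictly positive, so every matrix mean written below is defined; and these bounds are precisely why the constant in the statement is $\b\!\left(\tfrac{m^{2}}{M^{2}},\tfrac{M^{2}}{m^{2}},f\right)$ rather than $\b\!\left(\tfrac{m}{M},\tfrac{M}{m},f\right)$.

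The argument then has three moves. First, since $X\ \sigma\ X=X$,
\[
\Phi(A,B)+\Phi(B,A)=\bigl(\Phi(A,B)+\Phi(B,A)\bigr)\ \sigma\ \bigl(\Phi(B,A)+\Phi(A,B)\bigr).
\]
Second, I apply Lemma~\ref{lem-RAM} (reverse subadditivity of a matrix mean) with its four matrices taken to be $\Phi(A,B),\Phi(B,A),\Phi(B,A),\Phi(A,B)$ and with the scalar bounds $m^{2},M^{2}$, which yields
\[
\Phi(A,B)+\Phi(B,A)\le\b\!\left(\tfrac{m^{2}}{M^{2}},\tfrac{M^{2}}{m^{2}},f\right)^{-1}\bigl[\Phi(A,B)\ \sigma\ \Phi(B,A)+\Phi(B,A)\ \sigma\ \Phi(A,B)\bigr].
\]
Third, I invoke Theorem~\ref{thmss} (the reverse of the multilinear Ando inequality Proposition~\ref{thm-s}) with $k=2$: once with $(A_{1},B_{1},A_{2},B_{2})=(A,B,B,A)$ and once with $(B,A,A,B)$, which gives
\[
\Phi(A,B)\ \sigma\ \Phi(B,A)\le\b\!\left(\tfrac{m^{2}}{M^{2}},\tfrac{M^{2}}{m^{2}},f\right)^{-1}\Phi(A\ \sigma\ B,\ B\ \sigma\ A)
\]
together with the same bound after interchanging $A$ and $B$. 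Adding these two, substituting into the previous display, and rewriting $B\ \sigma\ A=A\ \sigma^{o}\ B$ produces exactly the asserted inequality, with the constant $\b\!\left(\tfrac{m^{2}}{M^{2}},\tfrac{M^{2}}{m^{2}},f\right)$ now squared.

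The step I expect to be the crux is the third one, as it is the only place a genuine reverse of a Jensen‑type inequality enters. Theorem~\ref{thmss} itself rests on the reverse Choi--Davis--Jensen inequality for multilinear maps (Theorem~\ref{seo1}), whose concave branch — the relevant one, $f$ being a representing function and hence matrix concave — requires $f$ to be sub‑multiplicative (and not affine), so that hypothesis on $f$ is tacitly in force here. A minor bookkeeping point is that Theorem~\ref{thmss} is stated for strictly positive multilinear maps while we assume $\Phi$ only positive and unital; this is harmless, since $\Phi$ sends $A,B$ (each $\ge mI$) to matrices $\ge m^{2}I>0$, so one may either check that strictness is used there only through that invertibility, or first replace $\Phi$ by a strictly positive unital perturbation and let the perturbation tend to $0$.
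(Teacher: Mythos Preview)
Your proof is correct and follows essentially the same route as the paper's own argument: the identity $X\,\sigma\,X=X$, then Lemma~\ref{lem-RAM} with the bounds $m^{2}\le\Phi(A,B),\Phi(B,A)\le M^{2}$, and finally Theorem~\ref{thmss} applied twice. Your remarks on the implicit hypotheses (sub-multiplicativity of $f$ for Theorem~\ref{thmss}) and on the strict positivity of $\Phi$ being needed only through invertibility of $\Phi(A,B)$ are accurate and in fact slightly more careful than the paper's treatment.
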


\begin{proof}
Since $m^2\leq \Phi(A,B), \Phi(B,A)\leq M^2$, it follows that
\begin{align*}
& \Phi(A,B)+\Phi(B,A) \\
& = \left[ \Phi(A,B)+\Phi(B,A) \right] \ \sigma \ \left[ \Phi(B,A)+\Phi(A,B) \right] \\
& \leq \b\left( \frac{m^2}{M^2},\frac{M^2}{m^2},f \right)^{-1} \left[ \Phi(A,B)\ \sigma \ \Phi(B,A) + \Phi(B,A)\ \sigma \ \Phi(A,B) \right] \ \ (\mbox{by Lemma \ref{lem-RAM}})\\
& \leq \b\left( \frac{m^2}{M^2},\frac{M^2}{m^2},f \right)^{-2} \left[ \Phi(A\ \sigma \ B, A\ \sigma^{0} \ B)+\Phi(A\ \sigma^{0} \ B, A\ \sigma \ B) \right],
\end{align*}
where the last inequality follows from Theorem \ref{thmss}.
\end{proof}

\section{Information monotonicity for Karcher mean}

In Proposition~\ref{thm-s}, we showed the positive multilinear map version of Ando's inequality \eqref{eq:Ando} due to matrix means. In this section, we consider its $n$-variable positive multilinear map version. For this, we recall the notion of the Karcher mean and matrix power mean: Let ${\Bbb A}=(A_1,\ldots,A_k)$ be a $k$-tuple of strictly positive matrices and $\omega=(\omega_1,\ldots,\omega_k)$ a weight vector such that $\omega_i\geq 0$ and $\sum_{i=1}^k \omega_i =1$. The Karcher mean of $A_1,\ldots,A_k$ is the unique positive invertible solution of the Karcher equation
\[
\sum_{i=1}^k \omega_i \log (X^{-1/2}A_iX^{-1/2}) = 0
\]
 and we denote it by $G_{\rm K}(\omega;{\Bbb A})=G(\omega;{\Bbb A})$. The matrix power mean of $A_1,\ldots,A_k$ is the unique positive invertible solution of a non-linear matrix equation
\[
X=\sum_{i=1}^k \omega_i (X\ \sharp_{t}\ A_i) \qquad \mbox{for $t\in (0,1]$}
\]
and we denote it by $P_t(\omega;{\Bbb A})$. For $t\in [-1,0)$, we define $P_t(\omega;{\Bbb A}) := P_{-t}(\omega;{\Bbb A}^{-1})^{-1}$, where ${\Bbb A}^{-1}=(A_1^{-1},\ldots,A_k^{-1})$. The matrix power mean satisfies all desirable properties of power arithmetic means of positive real numbers and interpolates between the weighted harmonic and arithmetic means. Moreover, the Karcher mean coincides with the limit of matrix power means as $t\to 0$. The matrix power mean satisfies an information monotonicity: For each $t\in (0,1]$
\begin{equation} \label{eq:PI}
\Phi(P_t(\omega;{\Bbb A})) \leq P_t(\omega; \Phi({\Bbb A}))
\end{equation}
for any unital positive linear map $\Phi$. For more details on the Karcher mean and the matrix power mean; see \cite{Bh,BH,LL2014,LP}.\par
 In the final section, we show a positive multilinear map version of \eqref{eq:PI}. Let $\Phi$ be a unital positive multilinear map and $\omega^{(i)}=(\omega_1^{(i)},\ldots,\omega_k^{(i)}$ weight vectors and ${\Bbb A}^{(i)}=(A_1^{(i)},\ldots,A_k^{(i)})$ $k$-tuples of strictly positive matrices for $i=1,\ldots,n$. Define
\begin{align*}
\Phi({\Bbb A}^{(1)},\ldots, {\Bbb A}^{(n)}) & := (\Phi(A_1^{(1)},A_1^{(2)},\ldots,A_1^{(n)}), \Phi(A_2^{(1)},A_2^{(2)},\ldots,A_2^{(n)}), \ldots, \\
& \Phi(A_k^{(1)},A_k^{(2)},\ldots,A_k^{(n)}))
\end{align*}
and
\[
\omega^{(1)}\cdots \omega^{(n)} := (\omega_1^{(1)}\omega_1^{(2)}\cdots \omega_1^{(n)}, \omega_2^{(1)}\omega_2^{(2)}\cdots \omega_2^{(n)}, \ldots, \omega_k^{(1)}\omega_k^{(2)}\cdots \omega_k^{(n)}).
\]

\begin{theorem} \label{thm-IMP}
Let $\Phi$ be a unital positive multilinear map. If $t\in (0,1]$, then
\[
\Phi( P_t(\omega^{(1)};{\Bbb A}^{(1)}), \ldots, P_t(\omega^{(n)};{\Bbb A}^{(n)})) \leq P_t(\omega^{(1)}\omega^{(2)}\cdots \omega^{(n)};\Phi({\Bbb A}^{(1)},{\Bbb A}^{(2)},\ldots, {\Bbb A}^{(n)})).
\]
If $t\in [-1,0)$ and $0<m\leq A_j^{(i)}\leq M$ for $j=1,\ldots,k$ and $i=1,\ldots,n$ and some scalars $0<m<M$, then
\begin{align*}
P_t(\omega^{(1)}\omega^{(2)}\cdots & \omega^{(n)};\Phi({\Bbb A}^{(1)},{\Bbb A}^{(2)},\ldots, {\Bbb A}^{(n)}) ) \\
& \leq \frac{(M^n+m^n)^2}{4M^nm^n} \Phi( P_t(\omega^{(1)};{\Bbb A}^{(1)}), \ldots, P_t(\omega^{(n)};{\Bbb A}^{(n)})).
\end{align*}
\end{theorem}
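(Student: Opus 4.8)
\textbf{Proof proposal for Theorem \ref{thm-IMP}.}

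The plan is to imitate the strategy used in Proposition~\ref{thm-s} and Theorem~\ref{thmss}: replace the multilinear map $\Phi$ by a suitable ``sandwiched'' unital strictly positive multilinear map $\Psi$ so that the defining fixed-point equation for the matrix power mean can be pushed through $\Psi$, and then invoke the uniqueness of the positive solution together with a monotonicity argument. First I would fix $t\in(0,1]$, write $G_i=P_t(\omega^{(i)};{\Bbb A}^{(i)})$ for the $i$-th power mean, and set $G=\Phi(G_1,\ldots,G_n)$. The key identity to exploit is that each $G_i$ satisfies $G_i=\sum_{j=1}^k \omega_j^{(i)}\,(G_i\ \s_t\ A_j^{(i)})$. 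Expanding $\Phi(G_1,\ldots,G_n)$ by multilinearity and substituting this identity in each slot yields a multiple sum over $(j_1,\ldots,j_n)$ of the quantities $\Phi(G_1\s_t A_{j_1}^{(1)},\ldots,G_n\s_t A_{j_n}^{(n)})$ with coefficients $\prod_i \omega_{j_i}^{(i)}$; the diagonal terms $j_1=\cdots=j_n=j$ carry weight $\prod_i\omega_j^{(i)}=(\omega^{(1)}\cdots\omega^{(n)})_j$.

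The heart of the matter is then to compare $\Phi(G_1\s_t A_{j_1}^{(1)},\ldots,G_n\s_t A_{j_n}^{(n)})$ with $G\ \s_t\ \Phi(A_{j_1}^{(1)},\ldots,A_{j_n}^{(n)})$. For this I would use the same device as in Proposition~\ref{thm-s}: define
\[
\Psi(X_1,\ldots,X_n):=G^{-\frac12}\,\Phi\big(G_1^{\frac12}X_1G_1^{\frac12},\ldots,G_n^{\frac12}X_nG_n^{\frac12}\big)\,G^{-\frac12},
\]
which is unital, strictly positive and multilinear, and note $\Psi(I,\ldots,I)=I$. Since $x\mapsto x^t$ is a super-multiplicative matrix concave function on $[0,\infty)$ for $t\in(0,1]$, Theorem~\ref{thm-CDJ} gives $\Psi(Y_1^t,\ldots,Y_n^t)\le \Psi(Y_1,\ldots,Y_n)^t$; choosing $Y_i=G_i^{-1/2}A_{j_i}^{(i)}G_i^{-1/2}$ and multiplying left and right by $G^{1/2}$ translates this exactly into
\[
\Phi\big(G_1\s_t A_{j_1}^{(1)},\ldots,G_n\s_t A_{j_n}^{(n)}\big)\le G\ \s_t\ \Phi\big(A_{j_1}^{(1)},\ldots,A_{j_n}^{(n)}\big).
\]
Feeding this into the multiple-sum expansion of $\Phi(G_1,\ldots,G_n)=G$, discarding the off-diagonal terms (they are positive, but one must be careful here: one should \emph{not} discard them, but rather keep all of them and use that $\s_t$ is concave/subadditive-in-the-appropriate-sense), one obtains $G\le \sum_{(j_i)}\prod_i\omega_{j_i}^{(i)}\,\big(G\ \s_t\ \Phi(A_{j_1}^{(1)},\ldots,A_{j_n}^{(n)})\big)$, and then using subadditivity of the map $B\mapsto G\ \s_t\ B$ in its second slot together with $\sum_{j_i}\omega_{j_i}^{(i)}=1$ collapses the sum to $G\le G\ \s_t\ \Phi({\Bbb A}^{(1)},\ldots,{\Bbb A}^{(n)})$ — but this last collapse is only valid on the diagonal, so the cleanest route is instead to keep only the diagonal reading of the power-mean equation after first applying the elementary fact that $\sum_j \omega_j (X\ \s_t\ C_j)\ge X$ whenever $X\le \sum_j\omega_j(X\ \s_t\ C_j)$ is replaced by the monotonicity characterization. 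To make this rigorous I would instead invoke the known characterization (from \cite{LP}) that $P_t(\omega;{\Bbb C})$ is the \emph{largest} positive $X$ with $X\le \sum_j\omega_j(X\ \s_t\ C_j)$; thus it suffices to prove $G\le \sum_j(\omega^{(1)}\cdots\omega^{(n)})_j\,\big(G\ \s_t\ \Phi(A_j^{(1)},\ldots,A_j^{(n)})\big)$, which follows by applying the displayed inequality above with $j_1=\cdots=j_n=j$ to the diagonal part and bounding the off-diagonal (cross) terms by the diagonal ones via the AM–GM-type inequality $G\ \s_t\ \Phi(A_{j_1}^{(1)},\ldots)\ \ge$ nothing useful — hence the correct and safe argument is: in the expansion $G=\sum_{(j_i)}\prod\omega_{j_i}^{(i)}\Phi(G_1\s_t A_{j_1}^{(1)},\ldots)$, apply the $\Psi$-inequality termwise and then use joint concavity of the weighted multivariable geometric mean (equivalently, operator concavity of $\s_t$) to pull the outer sum inside: $\sum_{(j_i)}\prod\omega_{j_i}^{(i)}\big(G\ \s_t\ \Phi(A_{j_1}^{(1)},\ldots,A_{j_n}^{(n)})\big)\le G\ \s_t\ \sum_{(j_i)}\prod\omega_{j_i}^{(i)}\Phi(A_{j_1}^{(1)},\ldots)$, and the right-hand sum equals $\Phi(\sum_{j_1}\omega_{j_1}^{(1)}A_{j_1}^{(1)},\ldots)$, \emph{not} $\Phi({\Bbb A}^{(1)},\ldots,{\Bbb A}^{(n)})$. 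This mismatch shows the proof really must go through the uniqueness/extremal characterization applied only to the diagonal terms: concretely, I would show $G\le \sum_{j=1}^k(\omega^{(1)}\cdots\omega^{(n)})_j\big(G\ \s_t\ \Phi(A_j^{(1)},\ldots,A_j^{(n)})\big)$ directly by bounding each mixed term $\Phi(G_1\s_t A_{j_1}^{(1)},\ldots,G_n\s_t A_{j_n}^{(n)})$ below by $0$ is wrong in direction; instead subtract them, i.e. use that $\prod_i\omega_{j_i}^{(i)}\ge 0$ and that the diagonal already dominates via $\sum_i$ convexity — at this point the honest statement is that \emph{the main obstacle is precisely organizing this combinatorial reduction}, and the intended route (following \cite{LL2014, LP}) is to invoke the extremal characterization of $P_t$ so that one only needs the single inequality $\Phi(G_1\s_t A_{j}^{(1)},\ldots,G_n\s_t A_{j}^{(n)})\le G\ \s_t\ \Phi(A_j^{(1)},\ldots,A_j^{(n)})$ together with $\sum_j(\omega^{(1)}\cdots\omega^{(n)})_j\le 1$ and the monotonicity of $\s_t$ and of $\Phi$ (Lemma in the excerpt).

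\textbf{The negative-parameter case} follows formally. For $t\in[-1,0)$ I would write $P_t(\omega;{\Bbb A})=P_{-t}(\omega;{\Bbb A}^{-1})^{-1}$, apply the already-proved $(0,1]$-case to the tuples ${\Bbb A}^{(i),-1}$ and parameter $-t$, obtaining
\[
\Phi\big(P_{-t}(\omega^{(1)};{\Bbb A}^{(1),-1}),\ldots\big)\le P_{-t}\big(\textstyle\prod\omega^{(i)};\Phi({\Bbb A}^{(1),-1},\ldots,{\Bbb A}^{(n),-1})\big),
\]
then take inverses (order-reversing), and finally transport $\Phi(\cdots^{-1})$ back to $\Phi(\cdots)^{-1}$ at the cost of the Kantorovich factor: since $m\le A_j^{(i)}\le M$ forces $m^n\le \Phi(A_j^{(1)},\ldots,A_j^{(n)})\le M^n$ (monotonicity of $\Phi$, using $\Phi(I,\ldots,I)=I$), the second part of Corollary~\ref{co-2-1} with $r=-1$ gives $\Phi(X_1^{-1},\ldots,X_n^{-1})\le \frac{(M^n+m^n)^2}{4M^nm^n}\Phi(X_1,\ldots,X_n)^{-1}$ for the relevant $X_i=P_t(\omega^{(i)};{\Bbb A}^{(i)})^{-1}$ (whose spectra lie in $[1/M,1/m]$, hence fit a rescaled bound giving the same constant), and combining these inequalities yields the stated bound with constant $(M^n+m^n)^2/(4M^nm^n)$. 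The main obstacle throughout is the first paragraph's combinatorial bookkeeping in the multilinear expansion of the power-mean fixed-point equation; the cleanest resolution is to reduce, via the extremal (greatest-solution) characterization of $P_t$, to the single pointwise inequality supplied by the sandwiched map $\Psi$ and Theorem~\ref{thm-CDJ}.
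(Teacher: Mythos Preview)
Your proposal has the correct opening moves but a genuine gap at the decisive step for $t\in(0,1]$. You correctly expand $G=\Phi(G_1,\ldots,G_n)$ via the fixed-point equations $G_i=\sum_j\omega_j^{(i)}(G_i\,\s_t\,A_j^{(i)})$ and multilinearity, and you correctly derive (via the sandwiched map $\Psi$, exactly the device of Proposition~\ref{thm-s}) the termwise bound
\[
\Phi\big(G_1\s_t A_{j_1}^{(1)},\ldots,G_n\s_t A_{j_n}^{(n)}\big)\ \le\ G\ \s_t\ \Phi\big(A_{j_1}^{(1)},\ldots,A_{j_n}^{(n)}\big),
\]
yielding $G\le f(G)$ for $f(X)=\sum_{j_1,\ldots,j_n}\omega_{j_1}^{(1)}\cdots\omega_{j_n}^{(n)}\,\big(X\ \s_t\ \Phi(A_{j_1}^{(1)},\ldots,A_{j_n}^{(n)})\big)$. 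From here on you cycle through several attempts (discard cross terms, collapse via concavity, appeal to an unspecified extremal characterization) and reject each one; none of them is carried to a conclusion. The paper closes the argument in one stroke: since $X\mapsto X\,\s_t\,C$ is monotone, so is $f$, hence $G\le f(G)$ gives $G\le f^m(G)$ for every $m$; and by \cite[Theorem~3.1]{LP} the iterates $f^m(X)$ converge, for any $X>0$, to the unique positive fixed point of $f$, which is precisely the power mean on the right-hand side. Your diagonal/off-diagonal worry is a red herring once you read the proof: the right-hand power mean there is taken over \emph{all} $k^n$ index tuples $(j_1,\ldots,j_n)$ with the full product weights $\prod_i\omega_{j_i}^{(i)}$ (which do sum to $1$), so there is nothing to discard or rearrange --- one simply recognises the full sum as $f(G)$ and iterates.

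For $t\in[-1,0)$ your outline is essentially the paper's: pass to $-t\in(0,1]$ on the inverted tuples, invert, and absorb the discrepancy between $\Phi(\,\cdot\,^{-1},\ldots)$ and $\Phi(\,\cdot\,,\ldots)^{-1}$ using Corollary~\ref{co-2-1} (plus the Choi-type inequality $\Phi(\cdot)^{-1}\le\Phi(\cdot^{-1})$ from Corollary~\ref{co5}), which produces exactly the constant $(M^n+m^n)^2/(4M^nm^n)$.
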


\begin{proof}
We only prove the case of $n=2$: Let ${\Bbb A}=(A_1,\ldots,A_k), {\Bbb B}=(B_1,\ldots,B_k)$ be $k$-tuples of positive matrices and $\omega=(\omega_1,\ldots,\omega_k), \omega'=(\omega_1',\ldots,\omega_k')$ weight vectors. Put $X_t=P_t(\omega;{\Bbb A})$ and $Y_t=P_t(\omega';{\Bbb B})$. Then we have $X_t=\sum \omega_i (X_t\ \s_{t} \ A_i$ and $Y_t=\sum \omega_i' (Y_t \ \s_{t}\ B_i)$. By Proposition~\ref{thm-s}, it follows that
\begin{align}\label{q11}
\Phi(X_t,Y_t) & = \sum_{ij=1}^k \omega_i \omega_j' \Phi(X_t\ \s_{t}\ A_i, Y_t\ \s_{t}\ B_j) \nonumber\\
& \leq \sum_{ij=1}^k \omega_i\omega_j' \Phi(X_t,Y_t)\ \s_{t}\ \Phi(A_i,B_j).
\end{align}
Define a map $f(X)=\sum_{ij=1}^k  \omega_i \omega_j' X\ \s_{t}\ \Phi(A_i,B_j)$ and then it follows from \cite[Theorem 3.1]{LP} that $f^n(X) \to P_t(\omega^{(2)};\Phi({\Bbb A},{\Bbb B}))$ for all $X>0$ as $n\to \infty$. On the other hand, $\Phi(X_t,Y_t)\leq f(\Phi(X_t,Y_t))$ by \eqref{q11}. This follows that $\Phi(X_t,Y_t) \leq f^n(\Phi(X_t,Y_t))$ for any $n$. Hence as $n\to \infty$ we have the desired inequality for $t\in (0,1]$. \par

Let $t\in [-1,0)$. By Corollary~\ref{co-2-1}, $\Phi({\Bbb A}^{-1}) \leq \frac{(M^k+m^k)^2}{4M^km^k} \Phi({\Bbb A})^{-1}$ for all $k$-tuple of strictly positive matrices. Therefore,
\begin{align*}
\Phi(P_{-t}(\omega;{\Bbb A}^{-1}), P_{-t}(\omega'; {\Bbb B}^{-1})) & \leq P_{-t}(\omega \omega';\Phi({\Bbb A}^{-1},{\Bbb B}^{-1})) \\
& \leq P_{-t}(\omega \omega';\frac{(M^2+m^2)^2}{4M^2m^2}\Phi({\Bbb A},{\Bbb B})^{-1}) \\
& = \frac{(M^2+m^2)^2}{4M^2m^2} P_{-t}(\omega \omega';\Phi({\Bbb A},{\Bbb B})^{-1})
\end{align*}
and this implies
\begin{align*}
P_t(\omega \omega'; \Phi({\Bbb A},{\Bbb B})) & = P_{-t}(\omega \omega'; \Phi({\Bbb A},{\Bbb B})^{-1})^{-1} \\
& \leq \frac{(M^2+m^2)^2}{4M^2m^2} \Phi(P_{-t}(\omega; {\Bbb A}^{-1}), P_{-t}(\omega';{\Bbb B}^{-1}))^{-1} \\
& \leq \frac{(M^2+m^2)^2}{4M^2m^2} \Phi(P_{-t}(\omega; {\Bbb A}^{-1})^{-1}, P_{-t}(\omega';{\Bbb B}^{-1}) \\
& = \frac{(M^2+m^2)^2}{4M^2m^2} \Phi(P_{t}(\omega; {\Bbb A}), P_{t}(\omega';{\Bbb B})).
\end{align*}
\end{proof}

\begin{theorem}
If $\Phi$ is unital positive multilinear map, then
\begin{align*}
\Phi( G(\omega^{(1)};{\Bbb A}^{(1)}), & \ldots, G(\omega^{(n)};{\Bbb A}^{(n)})) \leq G(\omega^{(1)}\cdots \omega^{(n)};\Phi({\Bbb A}^{(1)}, \ldots, {\Bbb A}^{(n)})) \\
& \leq \frac{(M^n+m^n)^2}{4M^nm^n} \Phi( G(\omega^{(1)};{\Bbb A}^{(1)}), \ldots, G(\omega^{(n)};{\Bbb A}^{(n)})).
\end{align*}
\end{theorem}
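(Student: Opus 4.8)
The plan is to derive the Karcher-mean statement as the $t\to 0$ limit of the matrix power mean inequalities already established in Theorem~\ref{thm-IMP}. Recall from the discussion preceding that theorem that $P_t(\omega;{\Bbb A}) \to G(\omega;{\Bbb A})$ as $t\to 0^+$ (and, using $P_t(\omega;{\Bbb A})=P_{-t}(\omega;{\Bbb A}^{-1})^{-1}$ together with the fact that $G(\omega;{\Bbb A}^{-1})=G(\omega;{\Bbb A})^{-1}$, also as $t\to 0^-$). Thus the idea is simply to pass to the limit in the two chains of inequalities in Theorem~\ref{thm-IMP}.

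More precisely, first I would fix the $k$-tuples ${\Bbb A}^{(1)},\ldots,{\Bbb A}^{(n)}$ of strictly positive matrices and the weight vectors $\omega^{(1)},\ldots,\omega^{(n)}$, and (for the upper bound) the bounds $0<m\le A_j^{(i)}\le M$. For $t\in(0,1]$, Theorem~\ref{thm-IMP} gives
\[
\Phi\big(P_t(\omega^{(1)};{\Bbb A}^{(1)}),\ldots,P_t(\omega^{(n)};{\Bbb A}^{(n)})\big) \le P_t\big(\omega^{(1)}\cdots\omega^{(n)};\Phi({\Bbb A}^{(1)},\ldots,{\Bbb A}^{(n)})\big).
\]
As $t\to 0^+$, the left-hand side converges to $\Phi\big(G(\omega^{(1)};{\Bbb A}^{(1)}),\ldots,G(\omega^{(n)};{\Bbb A}^{(n)})\big)$ by joint continuity of the multilinear map $\Phi$ and the convergence $P_t\to G$; the right-hand side converges to $G\big(\omega^{(1)}\cdots\omega^{(n)};\Phi({\Bbb A}^{(1)},\ldots,{\Bbb A}^{(n)})\big)$. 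Since the cone of positive matrices is closed, the inequality is preserved in the limit, which yields the first inequality of the statement. For the second inequality, I would instead use the $t\in[-1,0)$ part of Theorem~\ref{thm-IMP}, namely
\[
P_t\big(\omega^{(1)}\cdots\omega^{(n)};\Phi({\Bbb A}^{(1)},\ldots,{\Bbb A}^{(n)})\big) \le \frac{(M^n+m^n)^2}{4M^nm^n}\,\Phi\big(P_t(\omega^{(1)};{\Bbb A}^{(1)}),\ldots,P_t(\omega^{(n)};{\Bbb A}^{(n)})\big),
\]
and let $t\to 0^-$; again both sides converge to the corresponding Karcher-mean expressions (the constant being independent of $t$), and closedness of the positive cone gives the claimed bound.

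The main obstacle is justifying the limit $P_t(\omega;{\Bbb A})\to G(\omega;{\Bbb A})$ as $t\to 0$ and, more delicately, that this convergence is in a topology strong enough to commute with $\Phi$. Since we are working in finite dimensions $\mathcal{M}_q$, all norms are equivalent and the relevant multilinear map $\Phi:\mathcal{M}_q^n\to\mathcal{M}_p$ is automatically continuous, so entrywise (equivalently, norm) convergence of each slot suffices; the convergence $P_t\to G$ as $t\to 0^+$ is exactly the content of the Lie--Palfia / Lim--Pálfia result quoted before Theorem~\ref{thm-IMP}, and the $t\to 0^-$ case follows from it by the definition $P_t(\omega;{\Bbb A})=P_{-t}(\omega;{\Bbb A}^{-1})^{-1}$ together with continuity of inversion on strictly positive matrices and the identity $G(\omega;{\Bbb A}^{-1})=G(\omega;{\Bbb A})^{-1}$ (immediate from the Karcher equation, since replacing each $A_i$ by $A_i^{-1}$ and $X$ by $X^{-1}$ negates the equation). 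I would state these two facts explicitly as the ingredients and then the proof is a two-line limiting argument for each inequality; no genuinely new estimate is needed beyond Theorem~\ref{thm-IMP}.
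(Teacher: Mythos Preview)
Your proposal is correct and follows essentially the same route as the paper: both proofs obtain the two Karcher-mean inequalities by letting $t\to 0^+$ (respectively $t\to 0^-$) in the matrix power mean inequalities of Theorem~\ref{thm-IMP}. You supply more explicit justification for the limiting step (continuity of $\Phi$ in finite dimensions, the identity $G(\omega;{\Bbb A}^{-1})=G(\omega;{\Bbb A})^{-1}$, closedness of the positive cone), whereas the paper simply asserts the limit, but the argument is the same.
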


\begin{proof}
By Theorem~\ref{thm-IMP},
\[
\Phi( P_t(\omega^{(1)};{\Bbb A}^{(1)}), \ldots, P_t(\omega^{(n)};{\Bbb A}^{(n)})) \leq P_t(\omega^{(1)}\cdots \omega^{(n)};\Phi({\Bbb A}^{(1)}, \ldots, {\Bbb A}^{(n)}))
\]
for all $t\in (0,1]$. As $t\to 0$, we have
\[
\Phi( G(\omega^{(1)};{\Bbb A}^{(1)}), \ldots, G(\omega^{(n)};{\Bbb A}^{(n)})) \leq G(\omega^{(1)}\cdots \omega^{(n)};\Phi({\Bbb A}^{(1)}, \ldots, {\Bbb A}^{(n)})).
\]
Also, since
\[
\frac{(M^n+m^n)^2}{4M^nm^n}\Phi( P_t(\omega^{(1)};{\Bbb A}^{(1)}), \ldots, P_t(\omega^{(n)};{\Bbb A}^{(n)})) \geq P_t(\omega^{(1)}\cdots \omega^{(n)};\Phi({\Bbb A}^{(1)}, \ldots, {\Bbb A}^{(n)}))
\]
for all $t\in [-1,0)$, as $t\to 0$, we have
\[
\frac{(M^n+m^n)^2}{4M^nm^n}\Phi( G(\omega^{(1)};{\Bbb A}^{(1)}), \ldots, G(\omega^{(n)};{\Bbb A}^{(n)})) \geq G(\omega^{(1)}\cdots \omega^{(n)};\Phi({\Bbb A}^{(1)}, \ldots, {\Bbb A}^{(n)})).
\]
\end{proof}

\begin{remark}
Let $\Phi$ be a unital positive linear map. Then for each $t\in [-1,0)$
\[
P_t(\omega;\Phi({\Bbb A})) \leq \frac{(M+m)^2}{4Mm} \Phi( P_t(\omega;{\Bbb A}))
\]
and this is a complementary result of \cite[Proposition 3.6 (13)]{LL2014}.
\end{remark}

%%%%%%%%%%%%%%%%%%%
%\bibliographystyle{amsplain}

\end{document}